\theoremstyle{plain}
\newtheorem{thm}{Thm}[section]
\newtheorem{theorem}[thm]{Theorem}
\newtheorem{lemma}[thm]{Lemma}
\newtheorem{corollary}[thm]{Corollary}
\newtheorem{proposition}[thm]{Proposition}
\newtheorem{conjecture}[thm]{Conjecture}
\newtheorem{problem}[thm]{Problem}
\newtheorem{observation}[thm]{Observation}
\newtheorem{definition}[thm]{Definition}
\providecommand{\customgenericname}{}
\newcommand{\newcustomtheorem}[2]{%
	\newenvironment{#1}[1]
	{%
		\renewcommand\customgenericname{#2}%
		\renewcommand\theinnercustomgeneric{##1}%
		\innercustomgeneric
	}
	{\endinnercustomgeneric}
}
\newenvironment{proof*}{\noindent\emph{Proof of the claim:}}{\hfill$\Diamond$}
\newcommand{\CK}{\!\scriptscriptstyle -k}
\renewcommand{\pod}[1]{\allowbreak\mathchoice
	{\if@display \mkern 0mu\else \mkern 0mu\fi (#1)}
	{\if@display \mkern 0mu\else \mkern 0mu\fi (#1)}
	{\mkern 1mu(\mathrm{mod}\mkern 4mu #1)}
	{\mkern 0mu(#1)}
}
\tikzstyle{vertex}=[circle, draw, fill=black!50,
\tikzset{->-/.style={decoration={
			markings,
			mark=at position .5 with {\arrow{>}}},postaction={decorate}}}
\tikzstyle{bigblue}=[color=blue, very thick, >=stealth]
\tikzstyle{lightblue}=[color=blue, thin, >=stealth]
\tikzstyle{bigred}=[color=red, very thick, >=stealth]
\tikzstyle{lightred}=[color=red, thin, >=stealth]
\tikzstyle{biggreen}=[color=black!30!green, very thick, >=stealth]
\tikzstyle{lightgreen}=[color=black!30!green,  thin, >=stealth]
\begin{document}
	
	\title{Circular flows in mono-directed signed graphs}
	
	\author[1]{Jiaao Li}
	\author[2]{Reza Naserasr}
	\author[2]{Zhouningxin Wang}
	\author[3]{Xuding Zhu}
	
	\affil[1]{\small School of Mathematical Sciences and LPMC, Nankai University, Tianjin 300071, China} 
	\affil[2]{\small Université Paris Cité, CNRS, IRIF, F-75013, Paris, France}
	\affil[3]{\small Departments of Mathematics, Zhejiang Normal University, Jinhua, China,  
	 	\linebreak 
		Emails: lijiaao@nankai.edu.cn; \{reza, wangzhou4\}@irif.fr;  xdzhu@zjnu.edu.cn }
	
	\maketitle
	
	\renewcommand{\baselinestretch}{1.2}
	\linespread{1.2}
	
	\begin{abstract}
		In this paper the concept of circular $r$-flows in a mono-directed signed graph $(G, \sigma)$ is introduced. That is a pair $(D, f)$, where $D$ is an orientation on $G$ and $f: E(G)\to (-r,r)$ satisfies that $|f(e)|\in [1, r-1]$ for each positive edge $e$ and $|f(e)|\in [0, \frac{r}{2}-1]\cup [\frac{r}{2}+1, r)$ for each negative edge $e$, and the total in-flow equals the total out-flow at each vertex. The circular flow index of a signed graph $(G, \sigma)$ with no positive bridge, denoted $\Phi_c(G,\sigma)$,  is the minimum $r$ such that $(G, \sigma)$ admits a circular $r$-flow. This is the dual notion of circular colorings and circular chromatic numbers of signed graphs recently introduced in [Circular chromatic number of signed graphs. R. Naserasr, Z. Wang, and X. Zhu. Electronic Journal of Combinatorics, 28(2)(2021), \#P2.44], and is distinct from the concept of circular flows in bi-directed graphs associated to signed graphs studied in the literature. We give several equivalent definitions, study basic properties of circular flows in mono-directed signed graphs, and explore relations with flows in graphs.
		Then we focus on upper bounds on $\Phi_c(G,\sigma)$ in terms of the edge-connectivity of $G$. Tutte's 5-flow conjecture is equivalent to saying that every 2-edge-connected signed graph has circular flow index at most $10$. We show that if $(G, \sigma)$ is a 3-edge-connected (4-edge-connected, respectively) signed graph,  then $\Phi_c(G,\sigma)\leq 6$ ($\Phi_c(G,\sigma)\leq 4$, respectively). For $k\geq 2$,  if $(G, \sigma)$ is $(3k-1)$-edge-connected, then $\Phi_c(G,\sigma)\leq \frac{2k}{k-1}$; if $(G, \sigma)$ is $3k$-edge-connected, then $\Phi_c(G,\sigma)< \frac{2k}{k-1}$; and if $(G, \sigma)$ is $(3k+1)$-edge-connected, then $\Phi_c(G,\sigma)\leq \frac{4k+2}{2k-1}$. When restricted to the class of signed Eulerian graphs, if $(G, \sigma)$ is $(6k-2)$-edge-connected, then $\Phi_c(G, \sigma)\leq \frac{4k}{2k-1}$. Applying this on planar graphs we conclude that every signed bipartite planar graph of negative-girth at least $6k-2$ admits a homomorphism to $C_{\!\scriptscriptstyle -2k}$. For the particular values of $r_{_k}=\frac{2k}{k-1}$, and when restricted to two natural subclasses of signed graphs, the existence of a circular $r_{_k} $-flow  is strongly connected with the existence of a modulo $k$-orientation, and in case of planar graphs, based on duality, with the homomorphisms to $C_{\!\scriptscriptstyle -k}$.

	\end{abstract}

	\section{Introduction}\label{sec:introduction}
	
	The concept of nowhere-zero integer flows in graphs, introduced by W.T. Tutte \cite{T54} in 1950s, is a central topic in graph theory. When restricted to planar graphs, this is the dual concept of   proper vertex coloring.    In 1988, A. Vince \cite{V88} introduced a   refinement of proper vertex coloring of graphs, which is now called the {\em circular coloring}. The dual notion, \emph{circular flow} in graphs, was introduced by L.A. Goddyn, M. Tarsi, and C.-Q. Zhang \cite{GTZ98} in 1998. 
	
	A \emph{signed graph}, denoted $(G, \sigma)$ (or $\hat{G}$  if the signature is clear from the context), is a graph $G$ together with a signature $\sigma:E(G)\to \{+,-\}$. In this work graphs are allowed to have multi-edges, but no loop. The notion of the circular coloring of graphs has been recently extended to signed graphs in \cite{NWZ21}. 
	As the dual of the circular coloring of graphs, we  define the circular flow in mono-directed signed graphs as follows. 
	
	\begin{definition}\label{def:r-flow}
		Given a signed graph $(G, \sigma)$ and a real number $r\ge 2$, a \emph{circular $r$-flow} in $(G, \sigma)$ is a pair $(D, f)$ where $D$ is an orientation on $G$ and $f: E(G)\to (-r, r)$ satisfies the following conditions:
		\begin{itemize}
			\item For each positive edge $e$ of $(G, \sigma)$, $|f(e)|\in [1, r-1]$.
			\item For each negative edge $e$ of $(G, \sigma)$, $|f(e)|\in [0, \frac{r}{2}-1]\cup[\frac{r}{2}+1, r)$.
			\item For each vertex $v$ of $(G, \sigma)$, the total out-flow equals the total in-flow under the orientation $D$, i.e.,

			$$\partial_{\scriptscriptstyle D} f(v):=\sum\limits_{e \in  \overleftarrow{E_{\scriptscriptstyle D}}(v) } f(e)-\sum\limits_{e \in  \overrightarrow{E_{\scriptscriptstyle D}}(v)} f(e)=0,$$
			
			where $\overleftarrow{E_{\scriptscriptstyle D}}(v)$ and $\overrightarrow{E_{\scriptscriptstyle D}}(v)$ denote the sets of out-arcs and in-arcs at $v$ respectively. 
		\end{itemize}
		The {\em circular flow index} of a signed graph $(G, \sigma)$, denoted $\Phi_c(G, \sigma)$,  is defined as 
		$$\Phi_c(G, \sigma) = \inf\{r: (G, \sigma) \text{ admits a circular $r$-flow} \}.$$	
	\end{definition}
As in the case of graphs, given a flow $(D, f)$ in $(G, \sigma)$, and for an edge $e$ of $G$, if $D'$ is obtained from $D$ by reversing the direction on $e$ and $f'$ is obtained from $f$ by changing the value at $e$ to $-f(e)$, then the pair $(D', f')$ is also a flow in $(G, \sigma)$. Thus, whenever needed, we may assume that $f(e)\geq 0$ for each edge $e$. Such a flow is called a \emph{non-negative} circular $r$-flow.
	
	It is easily observed that the third condition of Definition~\ref{def:r-flow} extends to all edge-cuts: For each subset $X$ of vertices, 
	\begin{equation}\label{equ:Delta-f=0}
	\partial_{\scriptscriptstyle D} f(X):=\sum\limits_{e \in  \overleftarrow{E_{\scriptscriptstyle D}}(X) } f(e)-\sum\limits_{e \in  \overrightarrow{E_{\scriptscriptstyle D}}(X)} f(e)=0.	\end{equation}
	
	The interval of allowed values for $|f(e)|$ is a closed interval of a circle of circumference $r$ obtained from $[0, r]$ by identifying the two end points. It follows from the compactness of the circle that the infimum in the definition of the circular flow index can be replaced with the minimum, even for infinite graphs (see \cite{BZ98}). In finite signed graphs, which is the focus of this work, using the notion of tight cut, to be introduced in Section \ref{sec:CircularFlow}, we will see that in fact the circular flow index is always a rational number and that it can be computed for any given signed graph, though the problem of computing it is in the class of NP-hard problems.

	Denoting by $(G, +)$ ($(G, -)$, respectively) the signed graph where all the edges are positive (negative, respectively), it follows from the definition that $\Phi_c(G, +) = \Phi_c(G)$. Thus our study of the circular flow index of signed graphs includes that of graphs. Another notion of circular flows in signed graphs in which edges are bi-directed  is studied in the literature (see for example \cite{B83} and \cite{RZ11}).  That concept was motivated by coloring of graphs embedded in non-orientable surface. However, there is no dual concept for circular flows in bi-directed graphs. 
	The concept of circular flow of signed graphs studied in this paper is exactly the dual of circular coloring of signed graphs studied in \cite{NWZ21}.
	
	In  Definition \ref{def:r-flow}, if the condition that $``\!\!\!\!\!\sum\limits_{e \in  \overleftarrow{E_{\scriptscriptstyle D}}(v) } f(e)-\sum\limits_{e \in  \overrightarrow{E_{\scriptscriptstyle D}}(v)} f(e)=0$ for every vertex $v$'' is replaced by the condition that $``\!\!\!\sum\limits_{e\in C^F} f(e)-\sum\limits_{e\in C^B} f(e)=0$ for every cycle $C$'' (where $C^F$ and $C^B$ denote the set of forward and backward arcs of $C$ respectively), then $f$ is called a \emph{circular $r$-tension}  in $(G, \sigma)$. The {\em circular chromatic number} of $(G, \sigma)$, denoted $\chi_c(G, \sigma)$, is the infimum of those $r$ for which $(G, \sigma)$ admits a circular $r$-coloring. Equivalently, it is the infimum of those $r$ for which $(G, \sigma)$ admits a circular $r$-tension. 
	
	Given a signed plane graph $(G, \sigma)$, the \emph{dual signed graph} of $(G, \sigma)$ is the signed plane graph $(G^*, \sigma^*)$ defined as follows: $G^*$ is the dual graph of the underlying graph $G$ and $\sigma^*(e^*)=\sigma(e)$ for each edge $e^*\in E(G^*)$ where $e^*$ is the dual edge of $e$.  It is easy to see that when restricted to signed plane graphs, a circular $r$-flow in $(G, \sigma)$ defined in this paper is equivalent to a circular $r$-tension of $(G^*, \sigma^*)$, and hence 
	$\Phi_c(G, \sigma) = \chi_c(G^*, \sigma^*)$. 
	
	\medskip
	
	A central topic in the study of circular flow in graphs is the relation between the edge-connectivity and the circular flow index.
	It is easily observed that, given a graph $G$, if a function $f$ satisfies Equation (\ref{equ:Delta-f=0}), then for every bridge $e$ of $G$ we must have $f(e)=0$. Thus a signed graph with a positive bridge admits no circular flow. For a signed graph $(G, \sigma)$ with no positive bridge, it is observed in Section~\ref{sec:CircularFlow},  based on the $6$-flow theorem of Seymour~\cite{S81}, that $\Phi_c(G, \sigma)\leq 12$. Note that if $(D,f)$ is a circular $r$-flow in $(G,\sigma)$ and $r' \ge r$, then $(D, \frac{r'}{r}f)$ is a circular $r'$-flow in $(G, \sigma)$.   
	 One of the most influential problems in this area is Jaeger's circular flow conjecture \cite{J88}.  Originally it asserted that for any positive integer $k$, every $4k$-edge-connected graph admits a circular $\frac{2k+1}{k}$-flow, observing that the $(4k-1)$-edge-connected graph $K_{4k}$ does not admit such a flow. This exact assertion was disproved in 2018 \cite{HLWZ18} for each integer $k\ge 3$. However, in support of the conjecture, many upper bounds for the circular flow indices of highly edge-connected graphs are provided. The best general results until now are that of \cite{LTWZ13} and \cite{LWZ20}. In \cite{LTWZ13} it is proved that the circular flow index of a $6k$-edge-connected graph is at most $\frac{2k+1}{k}$. In \cite{LWZ20} it is proved that the circular flow index of a $(6k+2)$-edge-connected graph is strictly less than $\frac{2k+1}{k}$, and that the circular flow index of a $(6k-2)$-edge-connected graph is at most $\frac{4k}{2k-1}$. For general values of $k$, none of these upper bounds are known to be tight and the search for the best upper bound continues.

	The restriction of Jaeger's circular flow conjecture to planar graphs has attracted a considerable amount of attention and remains open. 
	This restricted version, using the duality,  is stated as follows: Every planar graph of girth at least $4k$ admits a circular $\frac{2k+1}{k}$-coloring.  In \cite{KZ00, Zh02}, supported by the folding lemma, the odd-girth version of this conjecture was introduced. It asserts that every planar graph of odd-girth at least $4k+1$ has circular chromatic number at most $2+\frac{1}{k}$. The case $k=1$ is the well-known Gr\"{o}tzsch theorem. For $k=2$, the best known result of odd-girth $11$ follows from the lower bound on the edge-density of $C_{\!\scriptscriptstyle \, 5}$-critical graphs given in \cite{DP17}. Similarly, that
	every planar graph of odd-girth at least $17$ admits a circular $\frac{7}{3}$-coloring follows from \cite{PS22}. Alternative proofs of these two results (for $k=2,3$) are given in \cite{CL20}. For general values of $k$, the best supporting results follow from the general results on flows that are mentioned above.

	In this paper we first present some basic properties of circular flow in mono-directed signed graphs, and explore relations between flows in graphs and flows in signed graphs. In Section~\ref{sec:ModuloOrientation} we consider the special case of circular $\frac{2k}{k-1}$-flow. After extending the notion of modulo $k$-orientation to signed graphs we show a strong connection between the existence of a modulo $k$-orientation and the existence of a circular $\frac{2k}{k-1}$-flow.

	The main focus of this work, presented in Sections~\ref{sec:Main} and \ref{sec:Application to planar graphs}, is mostly on the relation between the edge-connectivity and the circular flow index of signed graphs. We observe that Tutte's 5-flow conjecture is equivalent to the statement that for any 2-edge-connected graph $G$ and any signature $\sigma$, $\Phi_c(G,\sigma)\leq 10$, and  Seymour's $6$-flow theorem is equivalent to   $\Phi_c(G,\sigma)\leq 12$. We show that if $G$ is 3-edge-connected (4-edge-connected, respectively), then $\Phi_c(G,\sigma)\leq 6$ ($\Phi_c(G,\sigma)\leq 4$, respectively). For $k\geq 2$, if $G$ is $(3k-1)$-edge-connected, then $\Phi_c(G,\sigma)\leq \frac{2k}{k-1}$; if $G$ is $3k$-edge-connected, then $\Phi_c(G,\sigma)< \frac{2k}{k-1}$; and if $G$ is $(3k+1)$-edge-connected, then $\Phi_c(G,\sigma)\leq \frac{4k+2}{2k-1}$. When restricted to the class of signed Eulerian graphs, if $G$ is $(6k-2)$-edge-connected, then $\Phi_c(G, \sigma)\leq \frac{4k}{2k-1}$. Applying this on planar graphs we conclude that every signed bipartite planar graph of negative-girth at least $6k-2$ admits a homomorphism to $C_{\!\scriptscriptstyle -2k}$.

	\medskip
	 In this paper, we will basically follow the notation of \cite{W96}. A \emph{cycle} of a graph $G$ is a $2$-connected subgraph all whose vertices are of degree $2$ (in the subgraph). A subgraph where the degree of each vertex is even is an \emph{even-degree} subgraph, and a connected even-degree subgraph is an \emph{Eulerian} subgraph. Given a proper and nonempty subset $X$ of vertices, the edge-cut (or cut for short) consisting edges with exactly one end in $X$ is denoted by $(X, X^c)$. A graph $G$ or a signed graph $(G, \sigma)$ is \emph{$k$-edge-connected} if each edge-cut of $G$ has at least $k$ edges.	Given a graph $G$, we denote by $kG$ the graph obtained from $G$ by replacing each edge with $k$ parallel edges.

	Given an orientation $D$ on $G$, the \emph{out-degree} of a vertex $v$ is denoted by $\overleftarrow{d_{\scriptscriptstyle D}}(v)$ and the \emph{in-degree} by $\overrightarrow{d_{\scriptscriptstyle D}}(v)$. In a signed graph $\hat{G}$, the set of positive edges (respectively, negative edges) are denoted by $E_{\scriptscriptstyle \hat{G}}^+$ (respectively, $E_{\scriptscriptstyle \hat{G}}^-$). The \emph{positive degree} of a vertex $v$ is the number of positive edges incident to $v$ and is denoted by $d^{+}(v)$. The \emph{negative degree}, denoted $d^{-}(v)$, is defined analogously. Given an orientation $D$ on a signed graph $\hat{G}$, we have the following four notions: \emph{positive in-degree}, \emph{positive out-degree}, \emph{negative in-degree} and \emph{negative out-degree} which are denoted, respectively, 
	$\overrightarrow{d^+_{\scriptscriptstyle \hat{G}}}(v)$, 
	$\overleftarrow{d^+_{\scriptscriptstyle \hat{G}}}(v)$, 
	$\overrightarrow{d^-_{\scriptscriptstyle \hat{G}}}(v)$, 
	$\overleftarrow{d^-_{\scriptscriptstyle \hat{G}}}(v)$. 
	In each of these notations, we may drop  the subscript when it is clear from the context.
	
	Given a signed graph $(G, \sigma)$ and a vertex $v$, \emph{switching} at $v$ is to switch the signs of all the edges incident to $v$ in $(G, \sigma)$. We say two signed graphs are \emph{switching equivalent} if one can be obtained from the other by switching at some vertices.   
	Given a signed graph $\hat{G}=(G, \sigma)$, the signed graph obtained from $\hat{G}$ by changing the signs of all the edges is denoted by $-\hat{G}$, that is to say, $-\hat{G}=(G, -\sigma)$. A \emph{negative cycle} of length $k$, denoted $C_{\CK}$, is the signed graph on the cycle of length $k$ with only one negative edge. It is switching equivalent to any signed graph on $C_{\scriptscriptstyle k}$ with an odd number of negative edges. One may observe that for each even integer $k$, the two signed cycles $C_{\CK}$ and $-C_{\CK}$ are switching equivalent. But for each odd value $k$, the signed cycle $-C_{\CK}$ is switching equivalent to $C_{\scriptscriptstyle k}$.

	\section{Switching and inversing in signed graphs}

	The circular chromatic number of a signed graph, as well as many other parameters of signed graphs, are invariant under the switching operation. The circular flow index, however, is invariant under the dual of switching operation. Given a signed graph $\hat{G}$ and a cycle $C$ of $\hat{G}$, \emph{inversing} on $C$ is to change the signs of all the edges of $C$. Two signed graphs $(G, \sigma)$ and $(G, \sigma')$ are said to be \emph{inversing equivalent} if one can be obtained from the other by inversing on some cycles.
	
	The combination of a sequence of switching at vertices is equivalent to switching at an edge-cut, i.e., change the signs of all the edges in an edge-cut. Similarly, the combination of a sequence of inversing on cycles is to change the signs of all the edges in an even-degree subgraph. The \emph{sign} of a multiset $F$ of edges in $(G, \sigma)$ is $\prod\limits_{e \in F}\sigma(e)$ where multiplicity counts. Switching at vertices does not change the sign of a cycle, and inversing on cycles does not change the sign of a cut. 
	It was proved by Zaslavsky~\cite{Z82} that $(G, \sigma)$ and $(G, \sigma')$ are switching equivalent if and only if they have the same set of negative cycles. The dual statement follows from the fact that the symmetric difference of two inversing-equivalent signatures induces an even-degree subgraph:

	\begin{lemma}\label{lem:Cycle-Switch-Equiv}
		Two signed graphs $(G, \sigma)$ and $(G, \sigma')$ are inversing equivalent if and only if they have the same set of negative edge-cuts.
	\end{lemma}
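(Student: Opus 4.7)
The plan is to reduce the statement to the classical cycle--cut duality over $\mathbb{F}_2$, by tracking the edge set $F=\{e\in E(G):\sigma(e)\neq\sigma'(e)\}$ on which the two signatures disagree, and relating its intersection parity with every edge-cut to the sign of that cut.

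For the forward direction, I would first verify that applying inversings on cycles $C_1,\dots,C_t$ in succession changes signs precisely on the symmetric difference $F=E(C_1)\triangle\cdots\triangle E(C_t)$ (an edge whose sign is flipped an even number of times returns to its original value). Since each $E(C_i)$ is an even-degree subgraph and symmetric difference preserves this property, $F$ is itself an even-degree subgraph. The key elementary fact is that every even-degree subgraph meets every edge-cut in an even number of edges (apply the handshake lemma inside the induced subgraph on one side of the cut). Consequently, for every edge-cut $(X,X^c)$, $|F\cap(X,X^c)|$ is even, so $\prod_{e\in (X,X^c)}\sigma(e)=\prod_{e\in(X,X^c)}\sigma'(e)$, and the set of negative edge-cuts is preserved.

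For the converse, assume $(G,\sigma)$ and $(G,\sigma')$ have the same set of negative edge-cuts, and again set $F=\{e:\sigma(e)\neq\sigma'(e)\}$. For any subset $X$ of vertices, the sign of $(X,X^c)$ is unchanged, which forces $|F\cap(X,X^c)|$ to be even. Specializing to $X=\{v\}$ for each vertex $v$, the degree of $v$ in $F$ is even, so $F$ is an even-degree subgraph of $G$. It is a standard fact that every even-degree subgraph decomposes as an edge-disjoint union of cycles $C_1,\dots,C_t$, hence $F=E(C_1)\triangle\cdots\triangle E(C_t)$. Inversing on $C_1,\dots,C_t$ one after another therefore transforms $\sigma$ into $\sigma'$, establishing inversing equivalence.

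The proof is essentially a dualization of Zaslavsky's switching lemma, and the only slightly delicate point is being careful in the forward direction that the net effect of a sequence of inversings is indeed the symmetric difference of the edge sets (so that edges shared by several inverted cycles are correctly counted modulo $2$). Beyond this bookkeeping, the argument rests entirely on the orthogonality between the cycle space and the cut space over $\mathbb{F}_2$, so I do not anticipate any substantial obstacle.
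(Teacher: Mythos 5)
Your proposal is correct and takes essentially the same approach the paper intends: the paper justifies the lemma in one line by noting that the symmetric difference of two inversing-equivalent signatures induces an even-degree subgraph, and your argument fills in exactly the details behind that remark. Both directions are handled properly --- the cycle--cut orthogonality over $\mathbb{F}_2$ for the forward implication, and the decomposition of the even-degree disagreement set $F$ into edge-disjoint cycles for the converse.
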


	Note that for  $(G, \sigma)$ and $(G, \sigma')$ to have the same set of negative cuts, it suffices that they have the same set of negative cuts among a basis of the bond-space. In particular, it suffices to consider all but one of the cuts $(\{v\}, V(G)\setminus\{v\})$ from each connected component. 
	As the sign of the cuts in a basis of the bond-space are independent of each other, we have the following corollary.
	
	\begin{corollary}
		Given a graph $G$ on $n$ vertices with $c$ connected components, there are $2^{n-c}$ distinct inversing-equivalent classes of signed graphs on $G$.
	\end{corollary}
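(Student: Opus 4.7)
The plan is to deduce the corollary directly from Lemma~\ref{lem:Cycle-Switch-Equiv} and basic linear algebra over $\mathbb{F}_2$, using the cut space (bond space) of $G$.

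First I would recall that by Lemma~\ref{lem:Cycle-Switch-Equiv}, each inversing-equivalence class of signed graphs on $G$ is uniquely determined by the function that assigns to every cut of $G$ its sign (as a product of edge signs). So the number of classes equals the number of distinct such sign functions that are realizable by some signature $\sigma : E(G) \to \{+,-\}$. Viewing signatures as elements of $\mathbb{F}_2^{E(G)}$ (with $-$ playing the role of $1$) and cuts as elements of the cut space $\mathcal{B}(G) \subseteq \mathbb{F}_2^{E(G)}$, the sign of a cut $B$ under signature $\sigma$ is simply the inner product $\langle \sigma, B \rangle \in \mathbb{F}_2$. Hence the sign function of a signature is a linear functional on $\mathcal{B}(G)$, and two signatures are inversing equivalent iff they induce the same linear functional on $\mathcal{B}(G)$.

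Next I would use the well-known fact that the cut space of a graph $G$ on $n$ vertices with $c$ connected components has dimension $n - c$, and that a basis is given, for each connected component, by the star cuts $(\{v\}, V(G) \setminus \{v\})$ at all vertices of that component except one (the omitted vertex in each component gives a cut which is the sum of the others in that component). So a linear functional on $\mathcal{B}(G)$ is uniquely determined by its values on these $n - c$ star cuts, yielding at most $2^{n-c}$ possible sign functions, hence at most $2^{n-c}$ inversing-equivalence classes.

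Finally, I would show every assignment of signs to this basis is realized by some signature. For a chosen star-cut basis, assigning a target sign in $\{+,-\}$ to each basis cut is done as follows: for each connected component, root it at the omitted vertex, pick a spanning tree, and for each non-root vertex $v$ process the tree edge just above $v$ by setting its sign so that the star cut at $v$ achieves the prescribed sign; non-tree edges can be set arbitrarily to $+$. This produces a signature realizing the desired sign vector on the basis, so the correspondence is a bijection and the count is exactly $2^{n-c}$. The only place requiring care is the surjectivity step in this last paragraph, but the spanning-tree construction makes it entirely routine.
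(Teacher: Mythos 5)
Your proposal is correct and follows essentially the same route as the paper, which derives the corollary from Lemma~\ref{lem:Cycle-Switch-Equiv} together with the observation that an inversing class is determined by the signs of the $n-c$ star cuts forming a basis of the bond space, and that these signs can be prescribed independently. Your explicit spanning-tree construction for surjectivity just fills in the detail the paper dismisses with ``the signs of the cuts in a basis of the bond-space are independent of each other.''
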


	\begin{lemma}\label{lem:SpanningTree}
		Let $G$ be a connected graph and let $T$ be a spanning tree of $G$. Given a signed graph $(G, \sigma)$, there exists an inversing-equivalent signed graph $(G, \sigma')$ of $(G, \sigma)$ such that all the negative edges are in $E(T)$.
	\end{lemma}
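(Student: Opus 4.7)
The plan is to exploit the fundamental cycles of $G$ with respect to $T$. Recall that for each chord $e \in E(G) \setminus E(T)$, adding $e$ to $T$ creates a unique cycle $C_e$, the \emph{fundamental cycle} of $e$, whose non-tree edges consist precisely of $\{e\}$. These cycles will be the operands of my inversing operations.

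First, I would set $F = \{e \in E(G) \setminus E(T) : \sigma(e) = -\}$, the collection of negative chords. Then I would form the new signature $\sigma'$ by successively inversing on the fundamental cycle $C_e$ for each $e \in F$. Equivalently, I may invert on the edge-set $H = \bigtriangleup_{e \in F} E(C_e)$ in a single step, which is still a combination of inversings on cycles; note that $H$ is an even-degree subgraph (as a symmetric difference of cycles) so that this is a legitimate composition of inversings, though proceeding one chord at a time already suffices for the argument and avoids any subtlety.

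Next I would verify the conclusion. Since each chord $e \in E(G) \setminus E(T)$ lies in exactly one fundamental cycle among $\{C_{e'} : e' \in E(G) \setminus E(T)\}$, namely $C_e$ itself, the sign of the chord $e$ is flipped by the procedure if and only if $e \in F$. Hence $\sigma'(e) = +$ for every chord $e$, and by construction $\sigma'$ is inversing equivalent to $\sigma$. Therefore all negative edges of $(G, \sigma')$ lie in $E(T)$, as desired.

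There is no real obstacle here; the only thing one needs to be attentive to is the fact that each chord appears in exactly one fundamental cycle, which is precisely what makes the sequence of inversings on $\{C_e : e \in F\}$ controllable. I would present the argument in at most half a page, immediately after Lemma~\ref{lem:Cycle-Switch-Equiv}, since the operation is essentially the dual of the well-known fact that, via switching at vertices, one can move all negative edges of a signed graph onto the chords of a chosen spanning tree.
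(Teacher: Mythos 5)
Your proposal is correct and follows essentially the same route as the paper: inversing on the fundamental cycle $C_e$ of each negative chord $e$, using the fact that each chord lies in exactly one fundamental cycle to conclude that all chords become positive. No issues.
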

	
	\begin{proof}
		For each edge $e\not\in T$, let $C_e$ be the unique cycle in the graph obtained from adding $e$ to $T$. If $e$ is a negative edge in $(G, \sigma)$, then apply an inversing on $C_e$. After applying this process on all the edges of $E(G)\setminus E(T)$, we obtain a signed graph $(G, \sigma')$ in which each edge in $E(G)\setminus E(T)$ is positive, proving this lemma.
	\end{proof}

	Depending on the sign and the parity of the number of edges, we have four types of cuts in a signed graph: a \emph{type $00$} (respectively, \emph{type $01$}, \emph{type $10$}, \emph{type $11$}) cut is a positive cut with  an even number of edges (respectively,  a positive cut  with  an odd number of edges,   a negative cut  with  an even number of edges,   a negative cut with an odd number of edges). For a signed graph $(G, \sigma)$, let \emph{$c_{\scriptscriptstyle ij}(G, \sigma)$} be the size (i.e., the number of edges) of a smallest cut of $(G, \sigma)$ of type $ij$ where $ij \in \mathbb{Z}_2^2$. When there is no such a cut, we write $c_{\scriptscriptstyle ij}(G, \sigma) =\infty$ if $ij\neq 00$,  and $c_{\scriptscriptstyle 00}(G, \sigma) =0$ for technical reasons.
	
	Let $-C^*_{\CK}$ be the signed graph on $kK_2$ with an odd number of positive edges, that is the dual of a plane $-C_{\CK}$.  The signed graph $-C^*_{\CK}$ is a key example in the study of signed graphs equipped with the inversing operation. Observe that $c_{\scriptscriptstyle 00}(-C^*_{\CK})=0$, $c_{\scriptscriptstyle 11}(-C^*_{\scriptscriptstyle -k})=\infty$ and that of the two values of $c_{\scriptscriptstyle 01}(-C^*_{\CK})$ and $c_{\scriptscriptstyle 10}(-C^*_{\CK})$, depending on the parity of $k$, one is $k$ and the other is $\infty$.
	
	There are two classes of signed graphs that are of special importance in the study of circular flow.
	\begin{itemize}
		\item For odd value $k$, a signed graph $(G, \sigma)$ satisfying that $c_{\scriptscriptstyle ij}(G, \sigma) \geq c_{\scriptscriptstyle ij}(-C^*_{\CK})$, for any $ij \in \mathbb{Z}_2^2$, is inversing equivalent to $(G,+)$.  
		\item For even value $k$, a signed graph $(G, \sigma)$ satisfying that $c_{\scriptscriptstyle ij}(G, \sigma) \geq c_{\scriptscriptstyle ij}(-C^*_{\CK})$, for any $ij \in \mathbb{Z}_2^2$, has all of its vertices to be of even degrees.
	\end{itemize}
	The restriction on the first class captures the classic graphs, and the second  is the class of even-degree (Eulerian if connected) signed graphs.

	The study of signed graphs with inversing operations is implicit in the study of $T$-joins of graphs. Given a graph $G$ and a subset $T$ of $V(G)$ where $|T|$ is even, a \emph{$T$-join} of $G$ is a spanning subgraph of $G$ where $T$ is the set of the odd-degree vertices. Given a signed graph $(G, \sigma)$, let $T$ be the set of vertices $v$ where the edge-cut $(\{v\}, V(G)\setminus \{v\})$ is negative. It is then easily observed that a subset $E'$ of edges is the set of negative edges of a signature $\sigma'$, inversing equivalent to $\sigma$, if and only if $E'$ induces a $T$-join. For more on $T$-joins we refer to \cite{C01}.

	\section{Equivalent definitions and basic properties}\label{sec:CircularFlow}
	
	In this section, we give some equivalent definitions and basic properties of circular $r$-flow in signed graphs. Most of these properties are direct extensions from graphs, and we shall omit some proofs.
	
	For $r=\frac{p}{q}$, in defining a circular $r$-flow in a signed graph, by adjusting some small portion of flow values on certain cycles, it can be shown that we may restrict ourselves to the values of the form $\frac{i}{q}$, for $i\in \{0, 1, \ldots, p-1\}$. Then multiplying all values by $q$, we may work with integer values. 
	
	\begin{definition}\label{def:p/q-flow}
		Given an even integer $p$ and an integer $q$ where $q\leq \frac{p}{2}$, a \emph{$(p,q)$-flow} in a signed graph $(G, \sigma)$ is a pair $(D, f)$ where $D$ is an orientation on $G$ and $f: E(G)\to \mathbb{Z}$ satisfies the following conditions. 
		\begin{itemize}
			\item For each positive edge $e$ of $(G, \sigma)$, $f(e)\in \{q, \ldots, p-q\}$.
			\item For each negative edge $e$ of $(G, \sigma)$, $f(e)\in \{0, \ldots, \dfrac{p}{2}-q\} \cup \{\dfrac{p}{2}+q, \ldots, p-1\}$.
			\item For each vertex $v$ of $(G, \sigma)$, $\sum\limits_{e \in \overleftarrow{E_{\scriptscriptstyle D}}(v) } f(e)=\sum\limits_{e \in \overrightarrow{E_{\scriptscriptstyle D}}(v) } f(e)$ (i.e., $\partial_{\scriptscriptstyle D} f(v)=0$).
		\end{itemize}
	\end{definition}

	In the study of integer flow, Tutte \cite{T54} introduced the concept of modulo $k$-flow in a graph $G$ as a pair $(D,f)$, where $D$ is an orientation of $G$ and   $f: E(D) \to \mathbb{Z}$ satisfies  $\partial_D f(v) \equiv 0 ~\pmod{k}$, and proved the following lemma.  
	
	\begin{lemma}{\em \cite{T54}}\label{lem:Tutte}
		If a graph admits a modulo $k$-flow $(D, f)$, then it admits an integer $k$-flow $(D, f’)$ such that $f'(e)\equiv f(e)~\pmod k$ for every edge $e$.
	\end{lemma}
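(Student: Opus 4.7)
The plan is to first produce an integer-valued \emph{exact} flow $g$ on $(G,D)$ with $g\equiv f\pmod{k}$, and then to drive all its values into $\{-(k-1),\ldots,k-1\}$ via a sequence of cycle-adjustments, each subtracting $k$ from $g$ along a suitable directed cycle.

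For the initial step, reduce $f$ modulo $k$ to obtain $f_0\colon E(G)\to\{0,1,\ldots,k-1\}$. Then $\partial_D f_0(v)=k\beta(v)$ for integers $\beta(v)$, and $\sum_v\beta(v)=0$. Working one connected component at a time, fix a spanning tree $T$ and strip its leaves one-by-one: at each step, assign an integer value on the tree-edge incident to the current leaf so as to absorb that leaf's current $\beta$-imbalance, and set $h=0$ off $T$. The result is $h\colon E(G)\to\mathbb{Z}$ with $\partial_D h=\beta$, whence $g_0:=f_0-kh$ is an integer flow with $g_0\equiv f\pmod{k}$.

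Now among all integer flows $g$ on $(G,D)$ with $g\equiv f\pmod{k}$, choose one minimizing
\[
\Phi(g)\;=\;\sum_{e\in E(G)}\max\!\bigl(0,\ |g(e)|-(k-1)\bigr).
\]
If $\Phi(g)=0$ then $(D,g)$ is the required integer $k$-flow. Otherwise some edge $e_0$ has $|g(e_0)|\ge k$; the two cases $g(e_0)\ge k$ and $g(e_0)\le -k$ being symmetric, assume the former. Build an auxiliary digraph $H$ by orienting each edge $e$ with $g(e)\ne 0$ along the sign of $g(e)$ relative to $D$, and weighting it by $|g(e)|$. Flow-conservation for $g$ says these weights form a positive integer circulation on $H$, which decomposes into positive-integer combinations of characteristic vectors of directed cycles in $H$; since $e_0$ has positive weight in this circulation, at least one cycle $C$ of the decomposition passes through $e_0$.

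Read $C$ back as a cycle in $G$ and define $g'$ by $g'(e)=g(e)-k$ for $e\in C$ traversed forward along $C$ relative to $D$, $g'(e)=g(e)+k$ for $e$ traversed backward, and $g'(e)=g(e)$ elsewhere. Then $g'$ is an integer flow with $g'\equiv g\equiv f\pmod{k}$, and the plan is to show $\Phi(g')<\Phi(g)$, contradicting minimality. By construction of $H$, the sign of $g(e)$ on every $e\in C$ matches the $C$-traversal sign, so for every non-$e_0$ edge of $C$ with $|g(e)|<k$ one has $|g'(e)|=k-|g(e)|\le k-1$, contributing $0$ to $\Phi$ both before and after; for $e_0$ the contribution drops strictly by at least $1$; and for any remaining $C$-edge with $|g(e)|\ge k$ the same sign-alignment shows its contribution does not grow. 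The main obstacle is selecting the right potential function: the naive $L^1$-norm $\sum_e|g(e)|$ can actually \emph{increase} under such a cycle-adjustment, since a forward edge of $C$ with $0<g(e)<k/2$ flips to $|g'(e)|=k-g(e)>|g(e)|$. The cut-off $\Phi$ is designed precisely so that such "already-small" edges contribute $0$ both before and after the swap, while the overflowing edge $e_0$ strictly loses at least one unit of $\Phi$.
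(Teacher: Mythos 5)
The paper offers no proof of this lemma --- it is quoted directly from Tutte's 1954 paper as a classical fact --- so there is no in-paper argument to compare against. Judged on its own, your proof is correct and complete.

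Your route differs from the classical textbook argument. The usual proof (Tutte's, reproduced e.g.\ in Diestel and in Zhang's book) never leaves the box $|g(e)|<k$: one picks, independently on each edge, one of the at most two representatives of $f(e)$ modulo $k$ lying in $(-k,k)$, minimizes the total boundary defect $\sum_v|\partial_{\scriptscriptstyle D} g(v)|$ over these finitely many choices, and, if the defect is nonzero, flips representatives along a path from a vertex of positive defect to one of negative defect. You instead first restore exact conservation (via the spanning-tree construction, at the cost of large values) and then shrink the values by repeated cycle cancellation. Each step of yours is sound: the per-component identity $\sum_v\beta(v)=0$ makes the leaf-stripping work; the minimum of the integer-valued $\Phi$ is attained; the decomposition of a non-negative integer circulation into directed cycles is standard and does put $e_0$ on some cycle $C$; and the sign-alignment of $g$ with the traversal of $C$ gives exactly the case analysis you state, so $\Phi$ drops by at least one, contradicting minimality. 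Your closing remark correctly identifies the one genuinely delicate point --- the naive $L^1$ potential can increase under a cycle adjustment, and the truncated potential $\Phi$ is what makes the descent argument go through. The trade-off between the two approaches: the classical one needs the slightly fiddly reachability/cut argument to find the defect-cancelling path, while yours needs the circulation-decomposition lemma and the right choice of potential; both are elementary and of comparable length.
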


	\begin{definition}\label{def:p/q mod-flow}
		Given an even integer $p$ and an integer $q$ where $q\leq \frac{p}{2}$, a \emph{modulo $(p,q)$-flow} in $(G, \sigma)$ is a pair $(D, f)$ where $D$ is an orientation on $G$ and $f: E(G)\to \mathbb{Z}_p$ satisfies the following conditions.
		\begin{itemize}
			\item For each positive edge $e$ of $(G, \sigma)$, $f(e)\in \{q, \ldots, p-q\}$.
			\item For each negative edge $e$ of $(G, \sigma)$, $f(e)\in \{0, \ldots, \dfrac{p}{2}-q\}\cup \{ \dfrac{p}{2}+q,\ldots,  p-1\}$.
			\item For each vertex $v$ of $(G, \sigma)$, $\sum\limits_{e \in \overleftarrow{E_{\scriptscriptstyle D}}(v) } f(e) \equiv \sum\limits_{e \in \overrightarrow{E_{\scriptscriptstyle D}}(v) } f(e)  ~\pmod p$ (i.e., $\partial_{\scriptscriptstyle D} f(v)\equiv 0 ~\pmod p$).
		\end{itemize}
	\end{definition}
	
	It follows from Lemma \ref{lem:Tutte} that $(G, \sigma)$ admits a $(p,q)$-flow if and only if $(G, \sigma)$ admits a modulo $(p,q)$-flow.  
	
	The notion of $(p,q)$-flow is the discrete version of circular $\frac{p}{q}$-flow. Similarly, the modulo $(p,q)$-flow is the discrete version of circular modulo $\frac{p}{q}$-flow defined as follows: 
	
	\begin{definition}\label{def:p/q-modulo-flow}
		Given a signed graph $(G, \sigma)$ and a real number $r\ge 2$, a \emph{circular modulo $r$-flow} in $(G, \sigma)$ is a pair $(D, f)$ where $D$ is an orientation on $G$ and $f: E(G)\to [0, r)$ satisfies the following conditions:
		\begin{itemize}
			\item For each positive edge $e$ of $(G, \sigma)$, $f(e) \in [1, r-1]$.
			\item For each negative edge $e$ of $(G, \sigma)$, $f(e) \in [0, \dfrac{r}{2}-1]\cup [\dfrac{r}{2}+1, r)$.
			\item For each vertex $v$ of $(G, \sigma)$,   $\sum\limits_{e \in \overleftarrow{E_{\scriptscriptstyle D}}(v) } f(e) \equiv \sum\limits_{e \in \overrightarrow{E_{\scriptscriptstyle D}}(v) } f(e)  ~\pmod r$ (i.e., $\partial_{\scriptscriptstyle D} f(v)\equiv 0 ~\pmod r$).
		\end{itemize}
	\end{definition}
	
	It follows from the discussion above that:
	
	\begin{theorem}\label{thm:Equiavlences}
		Given a signed graph $(G, \sigma)$ and a rational number $r=\frac{p}{q}$ where $p$ is even and $p\geq 2q$, the following claims are equivalent:
		\begin{enumerate}[label=(\arabic*)]
			\item $(G, \sigma)$ admits a circular $r$-flow.
			\item $(G, \sigma)$ admits a $(p,q)$-flow.
			\item $(G, \sigma)$ admits a modulo $(p,q)$-flow.
			\item $(G, \sigma)$ admits a circular modulo $r$-flow.
		\end{enumerate} 
	\end{theorem}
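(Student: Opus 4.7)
The plan is to establish the four equivalences via the cycle
$(2)\Rightarrow(1)\Rightarrow(4)\Rightarrow(3)\Rightarrow(2)$. Two of the steps
are near-trivial scaling/relaxation, while the real content lies in
$(4)\Rightarrow(3)$, a linear-programming discretization, and $(3)\Rightarrow(2)$,
which is a direct application of Tutte's Lemma~\ref{lem:Tutte}.

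For $(2)\Rightarrow(1)$, given a $(p,q)$-flow $(D,f)$ the pair $(D,\tfrac{1}{q}f)$
is a circular $r$-flow: the integer value sets on positive and negative edges
translate into the required real intervals, and the divergence condition scales
linearly. For $(1)\Rightarrow(4)$, reverse directions so that $f\ge 0$ and note
that an exact flow trivially satisfies $\partial_{D} f(v)\equiv 0\pmod{r}$.

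The heart of the proof is $(4)\Rightarrow(3)$. Assume $(D,f)$ is non-negative.
For each positive edge set $I(e)=[1,r-1]$, and for each negative edge pick the
closed interval $I(e)\in\{[0,\tfrac{r}{2}-1],[\tfrac{r}{2}+1,r]\}$ that contains
$f(e)$ (closing the half-open endpoint by including $r$, which equals $0$ in
$\mathbb{R}/r\mathbb{Z}$). Consider the polyhedron in variables $h(e)$ defined by
$h(e)\in I(e)$ together with the \emph{linear} equations
$\partial_{D} h(v)=\partial_{D} f(v)$; it is nonempty since it contains $f$.
Because $p$ is even, every interval endpoint lies in $\tfrac{1}{q}\mathbb{Z}$, and
each $\partial_{D} f(v)$, being an integer multiple of $r=p/q$, is also in
$\tfrac{1}{q}\mathbb{Z}$. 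The constraint matrix is the node-arc incidence matrix of
$G$ augmented with single-variable bounds, hence totally unimodular, so after
scaling variables by $q$ the polyhedron has integer vertices. Any integer vertex
$\hat{h}^{\ast}$ lies in $\{q,\dots,p-q\}$ on positive edges and in
$\{0,\dots,\tfrac{p}{2}-q\}\cup\{\tfrac{p}{2}+q,\dots,p\}$ on negative edges; if
$\hat{h}^{\ast}(e)=p$ on a negative edge, reassign $\hat{h}^{\ast}(e):=0$, which
changes $\partial \hat{h}^{\ast}$ at the endpoints of $e$ by $\mp p$ and hence
preserves the congruence $\partial \hat{h}^{\ast}(v)\equiv 0\pmod{p}$. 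The result
is a modulo $(p,q)$-flow.

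For $(3)\Rightarrow(2)$, Lemma~\ref{lem:Tutte} applied to the underlying graph of
$(G,\sigma)$ converts $(D,f)$ into an integer-valued $f'$ with
$\partial_{D} f'(v)=0$, $f'(e)\equiv f(e)\pmod{p}$, and $|f'(e)|<p$, so
$f'(e)\in\{f(e),f(e)-p\}$. In the first case the value is already in the allowed
integer set; in the second, reversing the orientation of $e$ makes the value
$p-f(e)$, which lies in $\{q,\dots,p-q\}$ for positive edges and in the correct
half of $\{0,\dots,\tfrac{p}{2}-q\}\cup\{\tfrac{p}{2}+q,\dots,p-1\}$ for negative
edges (the boundary case $f(e)=0$ is handled by $f'(e)=0$). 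The main obstacle is
the closure issue in $(4)\Rightarrow(3)$: the half-open interval
$[\tfrac{r}{2}+1,r)$ must be closed to apply extreme-point analysis, after which
the boundary value $r$ that may appear at an integer vertex has to be translated
back to $0$ using the modulo-$r$ nature of the problem.
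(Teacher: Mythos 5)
Your proposal is correct. The paper itself offers no detailed proof of this theorem: it declares that the equivalences ``follow from the discussion above,'' where that discussion consists of (a) an informal claim that a circular $\frac{p}{q}$-flow can be pushed onto values in $\frac{1}{q}\mathbb{Z}$ ``by adjusting some small portion of flow values on certain cycles,'' (b) Tutte's Lemma~\ref{lem:Tutte} linking integer flows to modulo flows, and (c) the remark that the modulo $(p,q)$-flow is the discrete version of the circular modulo $r$-flow. Your cycle $(2)\Rightarrow(1)\Rightarrow(4)\Rightarrow(3)\Rightarrow(2)$ follows the same skeleton but concentrates all the discretization work in the single step $(4)\Rightarrow(3)$ and makes it rigorous via total unimodularity of the incidence matrix with box constraints, which is a clean substitute for the paper's hand-waved cycle-adjustment argument (the two are of course the same mechanism underneath, since integrality of network polyhedra is proved by rerouting along cycles). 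Two details you handle that the paper glosses over entirely, and which are genuinely needed, are the parity hypothesis on $p$ (so that $\frac{r}{2}\pm 1$ lies in $\frac{1}{q}\mathbb{Z}$ and the scaled polyhedron has integral bounds) and the half-open endpoint of $[\frac{r}{2}+1,r)$, which you close for the extreme-point argument and then fold the boundary value $p$ back to $0$ using the modulo-$p$ congruence. Your treatment of $(3)\Rightarrow(2)$ via Lemma~\ref{lem:Tutte}, including the edge-reversal for the representatives $f(e)-p$ and the boundary case $f(e)=0$, is exactly what the paper intends.
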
 
	
	\medskip
	
	Taking Definition~\ref{def:p/q-modulo-flow}, one observes that if $(D, f)$ is a circular modulo $r$-flow in $(G, \sigma)$ and $(G, \sigma')$ is obtained from $(G, \sigma)$ by inversing on a cycle $C$, then $(D, f')$ is a circular modulo $r$-flow in $(G, \sigma')$ where $f'(e)=f(e)-\frac{r}{2}$ for each edge $e\in E(C)$ and $f'(e)=f(e)$ for all other edges. Thus the circular flow index is invariant under inversing operation.

Let us consider a circular $r$-flow $(D,f)$, based on Definition~\ref{def:r-flow} where we take a non-negative flow,  in a given signed graph $\hat{G}$. Recall that for a positive edge $e$, it is required that  $1 \leq f(e) \leq r-1$, and for a negative edge $e$, either $0 \leq f(e) \leq \frac{r}{2}-1$ or $\frac{r}{2}+1 \leq f(e) \leq r$. Thus a signed graph $\hat{G}$ admits a circular $r$-flow if and only if there is an orientation $D$, a partition $\pi=(E^-_1, E^-_2)$ of $E^-_{\scriptscriptstyle \hat{G}}$, and a function $f: E\to \mathbb{R}_{\geq 0}$ satisfying that    $$\sum\limits_{e \in  \overleftarrow{E_{\scriptscriptstyle D}}(v) } f(e)-\sum\limits_{e \in  \overrightarrow{E_{\scriptscriptstyle D}}(v)} f(e)=0,$$ and $s(e) \le f(e) \le t(e)$ for every edge $e \in E(\hat{G})$,
	where
	
	\vspace{.3cm}
	\begin{minipage}{0.4\textwidth}
		$$
		s(e) = \begin{cases} 1, &\text{ if $e \in E^+_{\scriptscriptstyle \hat{G}}$,} \cr
		0 , &\text{ if $e \in E^-_1$,} \cr
		\dfrac{r}{2}+1 , &\text{ if $e \in E^-_2$,} \cr
		\end{cases}
		$$
	\end{minipage}
	\begin{minipage}{0.1\textwidth}
		\text{~~and}
	\end{minipage}
	\begin{minipage}{0.4\textwidth}
		$$
		t(e) = \begin{cases} r-1, &\text{ if $e \in E^+_{\scriptscriptstyle \hat{G}}$,} \cr
		\dfrac{r}{2}-1 , &\text{ if $e \in E^-_1$,} \cr
		r , &\text{ if $e \in E^-_2$.} \cr
		\end{cases}
		$$
	\end{minipage} 
	\vspace{.3cm}

	By applying  Hoffman's circulation theorem \cite{H60},  we have the following lemma, which can be also viewed as an equivalent definition of the existence of a circular $r$-flow.
	
	\begin{lemma}\label{lem:orientcut}
		A signed graph $\hat{G}$ admits a circular $r$-flow if and only if there is an orientation $D$ and a partition $\pi=(E^-_1, E^-_2)$ of $E^-_{\scriptscriptstyle \hat{G}}$ such that for every cut $(X, X^c)$,   $\sum\limits_{e \in \overleftarrow{(X,X^c)}}s(e) \le  \sum\limits_{e \in \overrightarrow{(X,X^c)}} t(e)$, 
		where the function $s$ and $t$ are as defined above, and $\overleftarrow{(X,X^c)}$ and $\overrightarrow{(X,X^c)}$ are the set of arcs oriented from $X^c$ to $X$  and the set of arcs oriented from $X$ to $X^c$, respectively.
	\end{lemma}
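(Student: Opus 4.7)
The plan is to invoke Hoffman's circulation theorem~\cite{H60} directly; the bulk of the work has already been done in the discussion immediately preceding the lemma, which reformulates the existence of a circular $r$-flow as a feasibility question for a circulation with prescribed lower and upper bounds on arcs. Concretely, that discussion shows that $\hat{G}$ admits a circular $r$-flow if and only if there exist an orientation $D$, a partition $\pi=(E^-_1, E^-_2)$ of $E^-_{\scriptscriptstyle \hat{G}}$, and a function $f: E(\hat{G})\to \mathbb{R}_{\ge 0}$ satisfying both $s(e)\le f(e)\le t(e)$ for every edge $e$ and $\partial_{\scriptscriptstyle D} f(v)=0$ at every vertex $v$.

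Fix any choice of $D$ and $\pi$, which in turn fixes the functions $s$ and $t$. The existence of an admissible $f$ for this fixed $(D,\pi)$ is exactly the classical circulation feasibility problem on the digraph $(V(\hat{G}),D)$ with lower bound $s$ and upper bound $t$. Hoffman's theorem then gives that such $f$ exists if and only if for every $X\subseteq V(\hat{G})$,
\[
\sum_{e \in \overleftarrow{(X,X^c)}} s(e)\ \le\ \sum_{e \in \overrightarrow{(X,X^c)}} t(e).
\]
Quantifying over the (finitely many) choices of $D$ and $\pi$ yields precisely the biconditional stated in the lemma.

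For completeness I would record the easy direction by hand: if an admissible $f$ exists, then summing $\partial_{\scriptscriptstyle D} f(v)=0$ over $v\in X$ makes all edges internal to $X$ or internal to $X^c$ cancel, giving $\sum_{e\in \overleftarrow{(X,X^c)}} f(e)=\sum_{e\in \overrightarrow{(X,X^c)}} f(e)$; bounding the left side below by $s(e)$ and the right side above by $t(e)$ yields the inequality. The nontrivial direction is entirely absorbed into Hoffman's theorem, which is invoked as a black box.

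There is no real obstacle here: the lemma is essentially a translation of Hoffman's circulation theorem into the signed-graph language, once one recognises that the two-sided constraints on $|f(e)|$ for a negative edge split naturally into two intervals, each handled by assigning the edge to $E^-_1$ or $E^-_2$ in the partition~$\pi$. The only mild care needed is to make sure that the forward direction of Hoffman's theorem — which only requires a single partition — is combined with the existential quantifier over $(D,\pi)$ consistently on both sides of the ``if and only if''.
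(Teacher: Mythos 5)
Your proposal is correct and matches the paper exactly: the paper offers no separate proof of this lemma, treating it as an immediate consequence of Hoffman's circulation theorem applied to the reformulation with the bound functions $s$ and $t$ given just before the statement, which is precisely your argument. Your explicit check of the easy direction and of the quantifier bookkeeping over $(D,\pi)$ is a harmless (and welcome) elaboration of what the paper leaves implicit.
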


	Note that given an orientation $D$ on a signed graph $\hat{G}$ and a partition $\pi=(E^-_1, E^-_2)$ of $E^-_{\scriptscriptstyle \hat{G}}$, there is a polynomial time algorithm that checks whether $\sum\limits_{e \in \overleftarrow{(X,X^c)}}s(e) \le  \sum\limits_{e \in \overrightarrow{(X,X^c)}} t(e)$ for every cut $(X, X^c)$. 
	However, it is an NP-hard problem to determine the circular flow index of an input (signed) graph   \cite{EMT16}.

	\begin{definition}
		\label{tight-cut}
		Given a signed graph $(G, \sigma)$ and a non-negative circular $r$-flow $(D, f)$ of $(G, \sigma)$, a positive edge $e$ is said to be \emph{tight} if either $f(e)=1$ or $f(e)=r-1$, and a negative edge $e$ is said to be \emph{tight} if either $f(e)=\frac{r}{2}-1$ or $f(e)=\frac{r}{2}+1$. A cut $(X, X^c)$ of $(G, \sigma)$ is said to be \emph{tight} with respect to $(D, f)$ if 
		$$f(e)=\begin{cases} 
		1, &\text{ if $e$ is a positive edge in $\overleftarrow{(X,X^c)}$,} \cr
		r-1, &\text{ if $e$ is a positive edge in $\overrightarrow{(X,X^c)}$,} \cr
		\frac{r}{2}+1, &\text{ if $e$ is a negative edge in $\overleftarrow{(X,X^c)}$,} \cr
		\frac{r}{2}-1, &\text{  if $e$ is a negative edge in $\overrightarrow{(X,X^c)}$.}
		\end{cases}$$
	\end{definition}

	The concept of tight cut in the circular flow is the dual notion of tight cycle in the circular coloring. Similar to the case of circular coloring, this notion can be used to provide an equivalent definition of the circular flow index. This is of particular importance to compute the exact value of the circular flow index of a given signed graph.
	
	\begin{lemma}\label{lem:tightcut}
		Given a signed graph $(G, \sigma)$, $\Phi_c(G, \sigma)=r$ if and only if $(G, \sigma)$ admits a circular $r$-flow and for any non-negative circular $r$-flow $(D,f)$, $(G, \sigma)$ has a tight cut with respect to $(D,f)$.
	\end{lemma}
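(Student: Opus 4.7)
The plan is to prove both directions via Lemma~\ref{lem:orientcut} (the Hoffman circulation theorem for signed graphs), bridged by the observation that for a non-negative circular $r$-flow $(D,f)$ of $\hat{G}=(G,\sigma)$ and a cut $(X,X^c)$, being tight in the sense of Definition~\ref{tight-cut} is equivalent to Hoffman's cut inequality $\sum_{e\in\overleftarrow{(X,X^c)}}s(e)\leq \sum_{e\in\overrightarrow{(X,X^c)}}t(e)$ being attained with equality, where $s,t$ are the bound functions from Lemma~\ref{lem:orientcut} associated with $D$ and the natural partition $\pi_f=(E^-_1,E^-_2)$ of $E^-_{\scriptscriptstyle \hat{G}}$ determined by $f$ (namely $E^-_1=\{e\in E^-_{\scriptscriptstyle \hat{G}}:f(e)\leq \frac{r}{2}-1\}$ and $E^-_2=\{e\in E^-_{\scriptscriptstyle \hat{G}}:f(e)\geq \frac{r}{2}+1\}$, a genuine partition since Definition~\ref{def:r-flow} forbids $f(e)\in(\frac{r}{2}-1,\frac{r}{2}+1)$ on negative edges). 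Indeed, combining the pointwise bounds $s(e)\leq f(e)\leq t(e)$ with flow conservation $\sum_{\overleftarrow{(X,X^c)}}f(e)=\sum_{\overrightarrow{(X,X^c)}}f(e)$ forces, at equality, each edge of the cut to saturate its corresponding $s$- or $t$-bound, which is exactly Definition~\ref{tight-cut}.

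For the forward direction, assume $\Phi_c(G,\sigma)=r$. Existence of a circular $r$-flow follows from the compactness discussion after Definition~\ref{def:r-flow}. Suppose for contradiction that some non-negative circular $r$-flow $(D,f)$ admits no tight cut. After reorienting, if necessary, the negative edges $e$ with $f(e)=0$ (an operation that preserves the flow because $\partial_{\scriptscriptstyle D}f(v)$ is unchanged when the direction of a zero-flow edge is flipped, while its effect on any Hoffman slack is only to move a non-negative contribution from the LHS to the RHS of the cut inequality), one obtains a non-negative circular $r$-flow for which Hoffman's cut inequality is strict on every cut. Since $G$ has only finitely many cuts and the bounds $s(e),t(e)$ depend piecewise-linearly (in particular continuously) on $r$, these strict inequalities persist after decreasing $r$ by a sufficiently small $\varepsilon>0$. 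Lemma~\ref{lem:orientcut} then yields a circular $(r-\varepsilon)$-flow of $(G,\sigma)$, contradicting $\Phi_c(G,\sigma)=r$.

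For the backward direction, the first hypothesis yields $\Phi_c(G,\sigma)\leq r$, and it remains to rule out $\Phi_c(G,\sigma)<r$. If $(G,\sigma)$ admitted a non-negative circular $r'$-flow $(D',f')$ with $r'<r$, then setting $\alpha=r/r'>1$, a direct verification shows that $(D',\alpha f')$ is a non-negative circular $r$-flow in which every positive edge has value in $[\alpha,r-\alpha]\subsetneq(1,r-1)$ and every negative edge has value in $[0,\frac{r}{2}-\alpha]\cup[\frac{r}{2}+\alpha,r)\subsetneq[0,\frac{r}{2}-1)\cup(\frac{r}{2}+1,r)$. Consequently no edge attains any of the values listed in Definition~\ref{tight-cut}, so no cut of $(G,\sigma)$ is tight with respect to $(D',\alpha f')$, contradicting the assumption.

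The main obstacle is in the forward direction: Definition~\ref{tight-cut} implicitly fixes a canonical partition on each tight cut (negative edges in $\overleftarrow{(X,X^c)}$ lie in $E^-_2$, those in $\overrightarrow{(X,X^c)}$ lie in $E^-_1$), which may differ from the partition $\pi_f$ determined by $f$ when some negative edge $e$ in $\overleftarrow{(X,X^c)}$ has $f(e)=0$. Reconciling these via the edge-reorientation preprocessing is the key technical step; once done, the remaining argument is a standard continuity-based perturbation together with Lemma~\ref{lem:orientcut}, and the backward direction is a pure scaling computation.
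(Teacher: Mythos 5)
Your backward direction is correct and is essentially the paper's own argument (scale an $r'$-flow up by $r/r'$ so that no edge, hence no cut, is tight). The forward direction, however, rests on a false bridging claim. A cut that is tight in the sense of Definition~\ref{tight-cut} does force equality in Hoffman's inequality for the partition $\pi_f$, but the converse fails: a negative edge $e\in E^-_1$ with $f(e)=0$ lying in $\overleftarrow{(X,X^c)}$ contributes $s(e)=0=f(e)$ and therefore never obstructs Hoffman equality, yet Definition~\ref{tight-cut} demands $f(e)=\frac{r}{2}+1$ on such an edge. So ``no tight cut'' does not yield ``every Hoffman inequality is strict''. Your proposed repair---reorienting the zero-flow negative edges---does not achieve this, and the justification offered for it is wrong: flipping such an edge moves its contribution from $s(e)=0$ on the left to $t(e)=\frac{r}{2}-1$ on the right only for those cuts in which the edge was an in-arc; for every cut in which it was an out-arc the flip removes $\frac{r}{2}-1$ from the right-hand side and can create new equalities. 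Worse, strictness on every cut is unattainable in general: of the two inequalities attached to the pair $X$, $X^c$, a zero-flow negative edge of the cut is an in-arc for exactly one of them whichever way it is oriented, and if, say, $v$ meets the rest of the graph only through a single negative (bridge) edge, then $f(e)=0$ is forced and one direction of the cut $(\{v\},V(G)\setminus\{v\})$ reads $0\le 0$ for either orientation. Hence the continuity step (``strict inequalities persist after decreasing $r$'') collapses; to salvage the approach you would have to show that every surviving Hoffman equality involves only terms that do not move as $r$ decreases, which is a genuine combinatorial claim about re-routing flow and is essentially the content of the lemma itself.

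The paper avoids Hoffman's criterion here altogether. It picks a non-negative circular $r$-flow minimizing the number of tight \emph{edges}, and uses the absence of a tight \emph{cut} to find, from any tight arc $(u,v)$, an oriented residual path from $u$ to $v$ consisting of edges with slack in the appropriate direction; perturbing the flow by a small $\epsilon$ along the resulting closed walk destroys that tight edge without creating new ones, contradicting minimality. Once no tight edge remains, the flow rescales to a circular $\frac{r}{1+\epsilon}$-flow, giving the contradiction. Some form of this augmenting-path argument is the missing ingredient in your outline.
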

	
	\begin{proof}
		We first prove the ``only if'' part. Assume that $\Phi_c(G, \sigma)=r$ but $(G, \sigma)$ has no tight cut with respect to some non-negative circular $r$-flows. We choose such a circular $r$-flow $(D, f)$ with minimum number of tight edges. First we claim that there is no tight edge in $(G, \sigma)$ with respect to $(D, f)$. Otherwise, let $uv$ be a tight edge. By symmetry, we may assume that $e=(u,v)$ is a positive arc with $f(e)=r-1$. We say a vertex $w$ is reachable from $u$ if there is an oriented path $P$ from $u$ to $w$ such that for each arc $e'=(x,y)$ on $P$, $e'$ is either a positive (respectively, a negative) forward arc with $f(e) < r-1$  (respectively, $f(e') < \frac{r}{2}-1$), or a positive (respectively, a negative) backward arc with $f(e) > 1$  (respectively, $f(e') >\frac{r}{2}+1$). 
		As $(G, \sigma)$ has no tight cut with respect to $(D, f)$, $v$ is reachable from $u$. Let $P$ be the path from $u$ to $v$ defined as above.   
		Then $B=P+(v,u)$ is an oriented cycle (where $e=(v,u)$ is viewed as a backward edge). Choose a sufficiently small  $\epsilon > 0$, and let $f'(e')=f(e')+\epsilon$ for each forward arc $e'$ of $B$ and $f'(e')=f(e')-\epsilon$ for each   backward arc $e'$ of $B$, we obtain a   circular $r$-flow $f'$ in which $e$ is no longer a tight edge and no new tight edge is created.  This contradicts   the choice of $(D, f)$, and hence there is no tight edge  in $(G, \sigma)$ with respect to $(D, f)$. Then for a sufficiently small $\epsilon > 0$, $(D, \frac{1}{1+\epsilon}f)$ is   a circular $\frac{r}{1+\epsilon}$-flow in $(G, \sigma)$, a contradiction.  
		
		For the ``if'' part, we need to show that if $\Phi_c(G, \sigma) < r$, then there is a circular $r$-flow in $(G, \sigma)$ such that there is no tight cut. Assume $\Phi_c(G, \sigma) = r' < r$ and let $(D, f')$ be a non-negative circular $r'$-flow in $(G, \sigma)$. Let $f =\frac{r}{r'} f'$. Then  $(D, f)$ is a non-negative circular $r$-flow in $(G, \sigma)$ and $(G, \sigma)$ contains no tight edge with respect to $(D,f)$.  
	\end{proof}
	
	Assume that $\Phi_c(G, \sigma)=r$ and $(D, f)$ is a circular $r$-flow in $(G, \sigma)$. By Lemma~\ref{lem:tightcut}, there exists a tight cut $(X, X^c)$ with respect to $(D, f)$. Assume that in $(X, X^c)$, there are $s_1$ positive forward (i.e., from $X$ to $X^c$) arcs $e$ having $f(e)=r-1$, $s_2$ positive backward arcs $e$ having $f(e)=1$, $t_1$ negative forward arcs $e$ having $f(e)=\frac{r}{2}-1$, and $t_2$ negative backward arcs $e$ having $f(e)=\frac{r}{2}+1$. Applying Equation~(\ref{equ:Delta-f=0}) on the cut $(X, X^c)$, we have that
	\begin{equation}\label{equ:tightCut}
	s_1(r-1)+t_1(\frac{r}{2}-1)=s_2+t_2(\frac{r}{2}+1).
	\end{equation}
	Thus, we can determine the circular flow index $r$ of $(G, \sigma)$: 
	\begin{equation}\label{equ:FormulaFor r}
	r=\dfrac{{2(s_1+s_2+t_1+t_2)}}{2s_1+t_1-t_2}=\dfrac{2|(X, X^c)|}{2s_1+t_1-t_2}.
	\end{equation}
	
	The values of $s_1$, $s_2$, $t_1$ and $t_2$ are all bounded by the number of edges of $G$. Therefore, Formula~(\ref{equ:FormulaFor r}) limits the possible choices of $\Phi_c(G, \sigma)$ to a rational number whose numerator and denominator each is bounded by $2|E(G)|$. Thus we have the following result.
	
	\begin{theorem}\label{thm:inf is attained}
		For every finite signed graph $(G, \sigma)$ with no positive bridge, $$\Phi_c(G, \sigma)=\min \{\dfrac{p}{q}\mid (G, \sigma) \text{ admits a circular } \dfrac{p}{q}\text{-flow}, ~ 1\leq 2q \leq p\leq 2|E(G)|\}.$$
		In particular, $\Phi_c(G, \sigma)$ is a rational number.
	\end{theorem}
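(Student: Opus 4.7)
The plan is to derive the theorem directly from Lemma~\ref{lem:tightcut} and Equation~(\ref{equ:FormulaFor r}). The inequality $\Phi_c(G,\sigma)\le \min\{\frac{p}{q}:\ldots\}$ is essentially free: every element of the right-hand set is, by definition, a value $r$ for which $(G,\sigma)$ admits a circular $r$-flow, so the infimum $\Phi_c(G,\sigma)$ cannot exceed any of them. All the work lies in the opposite inequality, which also gives the rationality statement as a by-product.

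For the reverse inequality, I would first argue that the infimum defining $\Phi_c(G,\sigma)$ is attained. Since $(G,\sigma)$ has no positive bridge, Seymour's $6$-flow theorem (mentioned in the introduction) yields $\Phi_c(G,\sigma)\le 12$, so the infimum is finite. The compactness remark in the paragraph following Definition~\ref{def:r-flow} then supplies a non-negative circular $r$-flow $(D,f)$ with $r=\Phi_c(G,\sigma)$. By Lemma~\ref{lem:tightcut}, there is a tight cut $(X,X^c)$ with respect to $(D,f)$, so Equation~(\ref{equ:FormulaFor r}) applies and gives
\[
r \;=\; \frac{2|(X,X^c)|}{2s_1+t_1-t_2}.
\]
Set $p:=2|(X,X^c)|$ and $q:=2s_1+t_1-t_2$. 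Then $p$ is a positive even integer with $p\le 2|E(G)|$; since $r>0$ and $p>0$ the denominator $q$ is a positive integer, so $1\le 2q$; and the hypothesis $r\ge 2$ in Definition~\ref{def:r-flow} forces $p\ge 2q$. Therefore $r=\Phi_c(G,\sigma)$ itself belongs to the set on the right-hand side, establishing the reverse inequality and the rationality simultaneously.

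The only genuine obstacle is the attainment of the infimum, and even this has effectively been absorbed into Lemma~\ref{lem:tightcut}: its ``only if'' direction was proved by starting from a minimizing circular $r$-flow at $r=\Phi_c(G,\sigma)$, whose existence is a standard compactness argument on the bounded polytope of admissible flow vectors after fixing an orientation and a partition of the negative edges (of which there are only finitely many). Once this attainment is granted, the tight-cut formula produces the rational representation with the claimed bounds mechanically, and no further calculation is required.
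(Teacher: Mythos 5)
Your proposal is correct and follows essentially the same route as the paper: the paper derives the theorem precisely by taking a minimizing non-negative circular $r$-flow, invoking Lemma~\ref{lem:tightcut} to obtain a tight cut, and reading off the rational representation from Formula~(\ref{equ:FormulaFor r}) with numerator and denominator bounded by $2|E(G)|$. Your additional checks on the bounds for $p$ and $q$ and the remark on attainment of the infimum match the paper's (mostly implicit) reasoning.
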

	
	Note that  by definition $\Phi_c(G, -)=2$ for any graph $G$.
	For signed graphs in general, it follows from the definition that  a circular $r$-flow in $G$ is also a circular $2r$-flow in $(G, \sigma)$ for any signature $\sigma$. 
	Thus we have the following proposition.
	\begin{proposition}\label{prop:2phi}
		For any graph $G$ without bridges and for any signature $\sigma$, $$\Phi_c(G, \sigma) \le 2 \Phi_c(G).$$
	\end{proposition}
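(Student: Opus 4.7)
The plan is to show directly that any circular $r$-flow on $G$ (viewed as the all-positive signed graph $(G,+)$) is itself a circular $2r$-flow on $(G,\sigma)$ for every signature $\sigma$, with no modification of the orientation $D$ or the function $f$. Since $G$ is bridgeless, Theorem~\ref{thm:inf is attained} applied to $(G,+)$ guarantees that the infimum defining $\Phi_c(G)$ is attained, so a concrete circular $\Phi_c(G)$-flow $(D,f)$ exists to work with.

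First I would fix a non-negative circular $r$-flow $(D,f)$ on $G$, so that $f(e)\in [1,r-1]$ for every edge $e$ and $\partial_D f(v)=0$ at every vertex. The flow-conservation condition at each vertex is purely a statement about $D$ and $f$ and does not reference the signature, so it continues to hold when we re-interpret $(D,f)$ on $(G,\sigma)$. Hence the only thing to verify is that the magnitude bands imposed by Definition~\ref{def:r-flow} at circumference $2r$ are satisfied edge by edge.

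Next I would check the two band conditions. For an edge $e$ which is positive in $(G,\sigma)$, the required condition $|f(e)|\in [1,2r-1]$ follows from $|f(e)|\in [1,r-1]$ since $r-1\le 2r-1$. For an edge $e$ which is negative in $(G,\sigma)$, the required condition is $|f(e)|\in [0,\tfrac{2r}{2}-1]\cup[\tfrac{2r}{2}+1,2r)=[0,r-1]\cup[r+1,2r)$; the inclusion $[1,r-1]\subseteq [0,r-1]$ makes this automatic. Therefore $(D,f)$ is a legitimate circular $2r$-flow on $(G,\sigma)$, and taking $r=\Phi_c(G)$ gives $\Phi_c(G,\sigma)\le 2\Phi_c(G)$.

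There is essentially no obstacle beyond correctly matching the definitions: the key point is that the ``forbidden middle band'' $(\tfrac{2r}{2}-1,\tfrac{2r}{2}+1)=(r-1,r+1)$ for negative edges at circumference $2r$ lies strictly above the interval $[1,r-1]$ used by the original flow, so no scaling is required. In particular this is why the factor $2$ (rather than something smaller) appears in the bound.
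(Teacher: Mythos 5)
Your proof is correct and matches the paper's argument: the paper likewise observes (just before the proposition) that a circular $r$-flow in $G$ is automatically a circular $2r$-flow in $(G,\sigma)$ for any signature, since the value band $[1,r-1]$ avoids the forbidden middle band $(r-1,r+1)$ at circumference $2r$. Your edge-by-edge verification simply spells out the details the paper leaves implicit.
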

	
	This upper bound is tight as below.

	\begin{lemma}\label{lem:FlowOfT_2(G)}
		Given a   graph $G$, let $T_2(G)$ be a signed graph obtained from $G$ by replacing each edge with a negative path of length $2$. Then $\Phi_c(T_2(G))=2\Phi_c(G)$.
	\end{lemma}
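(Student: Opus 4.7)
The plan is as follows. The upper bound $\Phi_c(T_2(G)) \le 2\Phi_c(G)$ follows directly from Proposition~\ref{prop:2phi} applied to the underlying graph of $T_2(G)$, which is a subdivision of $G$ and therefore shares its circular flow index with $G$. The content of the lemma is thus the matching lower bound $\Phi_c(T_2(G)) \ge 2\Phi_c(G)$, and the plan for this is to convert, via the modulo $(p,q)$-flow formulation of Theorem~\ref{thm:Equiavlences}, a flow realising $\Phi_c(T_2(G))$ into a flow on $G$ of half the index.

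Specifically, write $\Phi_c(T_2(G)) = p/q$ and fix a modulo $(p, q)$-flow $(D', f')$ on $T_2(G)$. For each edge $e = uv$ of $G$ with subdivision path $u - w_e - v$ in $T_2(G)$ (one edge positive, one negative), reverse one of the arcs at $w_e$ if necessary so that both arcs are oriented from $u$ to $v$; this is legal because negating a value in $\mathbb{Z}_p$ preserves both the positive-edge and the negative-edge range constraints of Definition~\ref{def:p/q mod-flow}. Conservation at the degree-$2$ vertex $w_e$ then forces the two arcs to carry a common value $c_e \in \mathbb{Z}_p$, and intersecting the positive-edge range $\{q,\dots,p-q\}$ with the negative-edge range $\{0,\dots,\tfrac{p}{2}-q\}\cup\{\tfrac{p}{2}+q,\dots,p-1\}$ shows $c_e \in \{q,\dots,\tfrac{p}{2}-q\} \cup \{\tfrac{p}{2}+q,\dots,p-q\}$.

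One then builds a modulo $(p, 2q)$-flow $(D, f)$ on $G$ by orienting $e$ from $u$ to $v$ and setting $f(e) = 2c_e \pmod p$. A direct case check gives $f(e) \in \{2q,\dots,p-2q\}$ in both subcases, verifying the positive-edge constraint for a $(p,2q)$-flow. Conservation is immediate: the unique arc of $T_2(G)$ at $v$ coming from each $e \ni v$ inherits its orientation from the chosen orientation of $e$ in $G$, so $\partial_D f(v) = 2\,\partial_{D'} f'(v) \equiv 0 \pmod p$. Invoking Theorem~\ref{thm:Equiavlences} in the other direction on $G$ then yields a circular $\tfrac{p}{2q}$-flow, so $\Phi_c(G) \le \tfrac{1}{2}\Phi_c(T_2(G))$, which finishes the lemma.

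The step that will require the most care is the ``upper'' subcase $c_e \in \{\tfrac{p}{2}+q,\dots,p-q\}$: in the continuous flow language a real-valued rescaling of $c_e$ here would overshoot the interval $[1,\tfrac{r}{2}-1]$ allowed on a positive edge of $G$, whereas the modulo viewpoint wraps $2c_e \pmod p$ back into $\{2q,\dots,p-2q\}$ without any adjustment, which is precisely why the argument is routed through Theorem~\ref{thm:Equiavlences} rather than carried out directly.
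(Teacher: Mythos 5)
Your proof is correct and follows essentially the same route as the paper's: in both, the key step is to normalize the orientation of each $2$-thread so that conservation at the degree-$2$ vertex forces a common value, to intersect the positive- and negative-edge ranges, and to transfer the resulting flow back to $G$ via the modulo-flow equivalence of Theorem~\ref{thm:Equiavlences}. The only cosmetic differences are that you derive the upper bound from Proposition~\ref{prop:2phi} plus the (standard) subdivision-invariance of the circular flow index of graphs instead of the paper's direct construction of a circular $2r$-flow on $T_2(G)$, and that you double the common value modulo $p$ where the paper reduces it modulo $r/2$.
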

	\begin{proof}
		Assume that $\Phi_c(G)=r$ and $(D,f)$ is a  circular $r$-flow in $G$.  We define an orientation $D'$ of $T_2(G)$ based on $D$ as follows: for each edge $uv\in G$ and its replacement path $uwv$ in $T_2(G)$, if $(u,v)\in D$, then $(u,w), (w,v)\in D'$. Moreover, we define $f'(uw)=f'(wv)=f(uv)\in [1,r-1]$. Then it is easy to verify that $(D', f')$ is a circular $2r$-flow in $T_2(G)$ by definition. Thus $\Phi_c(T_2(G))\le 2r=2\Phi_c(G)$.
		
		For the other direction, given $s\geq 2$, assume that $\Phi(T_2(G))=2s$ and $T_2(G)$ admits a circular $2s$-flow $(D',f')$. We may further choose the orientation $D'$ such that for any $2$-thread $uwv$, either $(u,w),(w,v)\in D'$ or $(v,w),(w,u)\in D'$. Hence, we also have $f'(uw)=f'(vw)$ for any $2$-thread $uwv$. Now we define $(D, f)$ of $G$ as follows: we orient edge $uv$ the same as $2$-thread $uwv$ in $D'$ and let $f(uv)=f'(uw)=f'(vw)\in [1,2s-1]\cap ([0,s-1]\cup[s+1,2s))$. Then we have $f(uv)\in [1,s-1]\cup [s+1,2s-1]$ for any edge $uv\in E(G)$. By definition, $(D,f)$ is a circular modulo $s$-flow in $G$. By Theorem~\ref{thm:Equiavlences}, we have $\Phi_c(T_2(G))\ge 2\Phi_c(G)$. 
		This completes the proof.
	\end{proof}

	\section[Modulo orientation]{Modulo $\ell$-orientation and homomorphism to cycles}\label{sec:ModuloOrientation}
	
	In Jaeger's circular flow conjecture, the real numbers of the form $2+\frac{1}{k}$ play a special role. A graph $G$ is circular $(2+\frac{1}{k})$-colorable if and only if $G$ admits a homomorphism to the odd cycle $C_{\scriptscriptstyle 2k+1}$. While to determine whether a graph admits a homomorphism to an odd cycle is an interesting and difficult (NP-complete) problem   \cite{MSW81}, it is easy to decide whether a graph $G$ admits a homomorphism to an even cycle. In contrast, to decide whether a signed graph $\hat{G}$ admits a homomorphism to a negative even cycle is difficult and related to many challenging conjectures \cite{NPW22,NRS15,NWZ21}.
	The dual concept of admitting  homomorphisms to cycles is ``modulo $\ell$-orientation''. We propose the following generalization to signed graphs.
	
	\begin{definition}\label{def:ModuloOrientation}
		A signed graph $(G, \sigma)$ is said to be \emph{modulo $\ell$-orientable} if there exists an inversing-equivalent signature $\sigma'$ and an orientation $D$ on $G$ such that, with respect to $(G, \sigma')$,   for each vertex $v\in V(G)$,
		\begin{equation}\label{equ:moduloR-orientation}
		(\ell-1)(\overleftarrow{d^{+}}(v)-\overrightarrow{d^{+}}(v))=\overleftarrow{d^{-}}(v)-\overrightarrow{d^{-}}(v).
		\end{equation}
		Such an orientation $D$ is called a \emph{modulo $\ell$-orientation} on $(G, \sigma)$.
	\end{definition}
	
	\begin{observation}
		\label{obs-1}
		Assume $(G, \sigma)$ admits a modulo $\ell$-orientation.
		\begin{itemize}
			\item If $\ell$ is an odd number, then the left side of Equation~(\ref{equ:moduloR-orientation}), being multiplied by $\ell-1$, is an even number. Thus in the subgraph induced by the set of negative edges with respect to $\sigma'$, the difference of in-degree and out-degree is an even number. Hence, in this subgraph, the degree of each vertex is even. So we can apply some inversing on cycles to inverse all the edges into positive signs. So $(G, \sigma)$ is inversing equivalent to $(G, +)$.
			\item If $\ell$ is an even number, then we conclude that at each vertex the difference of in-degree and out-degree for positive edges and negative edges, with respect to $\sigma'$, is of the same parity. That means the total degree of each vertex is even. Hence $G$ is an even-degree graph. 
		\end{itemize}
	\end{observation}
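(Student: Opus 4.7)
The plan is to read off the two conclusions directly from Equation~(\ref{equ:moduloR-orientation}) by a parity analysis of the two sides, and in the odd case to convert the resulting parity information into an explicit sequence of cycle inversings that turns $\sigma'$ into the all-positive signature.

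First I would fix an inversing-equivalent signature $\sigma'$ together with a modulo $\ell$-orientation $D$ witnessing Definition~\ref{def:ModuloOrientation}. For odd $\ell$, the factor $\ell-1$ on the left side of Equation~(\ref{equ:moduloR-orientation}) is even, so the right side $\overleftarrow{d^{-}}(v)-\overrightarrow{d^{-}}(v)$ is even at every vertex. Since any two integers and their difference have the same parity as their sum, $d^{-}(v)=\overleftarrow{d^{-}}(v)+\overrightarrow{d^{-}}(v)$ is also even at every vertex. Hence the spanning subgraph $H$ of $G$ whose edge set consists of the negative edges of $\sigma'$ has all even degrees, and therefore decomposes into edge-disjoint cycles $C_1,\ldots,C_t$ of $G$. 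Inversing successively on $C_1,\ldots,C_t$ flips exactly the edges of $H$ and produces a signature which is inversing equivalent to $\sigma'$ (hence to $\sigma$) and has every edge positive. This yields the first bullet.

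For even $\ell$, the factor $\ell-1$ is odd, so the left side of Equation~(\ref{equ:moduloR-orientation}) has the same parity as $\overleftarrow{d^{+}}(v)-\overrightarrow{d^{+}}(v)$. Equating parities across (\ref{equ:moduloR-orientation}), the integers $\overleftarrow{d^{+}}(v)-\overrightarrow{d^{+}}(v)$ and $\overleftarrow{d^{-}}(v)-\overrightarrow{d^{-}}(v)$ share the same parity at every vertex. Applying again the fact that the sum and the difference of two integers have the same parity, this means $d^{+}(v)$ and $d^{-}(v)$ share the same parity, and so $d(v)=d^{+}(v)+d^{-}(v)$ is even. Hence $G$ itself is an even-degree graph, giving the second bullet.

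I do not anticipate any real obstacle; both items reduce to short parity computations, with the odd case additionally invoking the standard decomposition of an even-degree graph into edge-disjoint cycles. The only point to take care of is that the parity statements are naturally made with respect to $\sigma'$, but the conclusions transfer to $\sigma$ without change: the first because inversing equivalence is transitive, and the second because the underlying graph $G$ is the same for every signature in an inversing-equivalence class.
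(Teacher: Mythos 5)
Your proposal is correct and follows essentially the same parity argument as the paper's own justification: even factor $\ell-1$ forces even negative degrees in the odd case (hence a cycle decomposition of the negative subgraph that can be inversed away), and matching parities of the positive and negative in/out-degree differences force even total degree in the even case. The only difference is that you spell out the standard "difference and sum have the same parity" step and the transfer from $\sigma'$ to $\sigma$, which the paper leaves implicit.
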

	
	If restricted to graphs, the case of $\ell$ being even is easy and not interesting. However, in the framework of signed graphs, modulo $\ell$-orientation for even $\ell$ is as interesting as for odd $\ell$.

	\begin{theorem}\label{thm:mod k-orientation}
		A signed graph $(G, \sigma)$ admits a modulo $\ell$-orientation if and only if there is a partition $\{E_1, E_2,\ldots, E_{\ell}\}$ of $E(G)$ such that the following two conditions are satisfied: 
		\begin{enumerate}[label=(\roman*)]
			\item\label{E1} Each $E_i$ is the set of positive edges of a signature which is inversing equivalent to $\sigma$;
			\item\label{E2} There is an orientation on $G$ such that for each vertex $v$ and any pair $i, j\in \{1, \ldots, \ell\}$, 
			\begin{equation}\label{equ:EiPartition}	\overleftarrow{d_{\scriptscriptstyle E_i}}(v)-\overrightarrow{d_{\scriptscriptstyle E_i}}(v)= \overleftarrow{d_{\scriptscriptstyle E_j}}(v)-\overrightarrow{d_{\scriptscriptstyle E_j}}(v).
			\end{equation}
		\end{enumerate}
	\end{theorem}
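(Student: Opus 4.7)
The plan is to prove both directions of the equivalence, with the ``partition $\Rightarrow$ modulo $\ell$-orientation'' direction being a short direct calculation and the converse direction resting on a flow-decomposition argument finished off by K\"{o}nig's edge-coloring theorem.

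For the ``if'' direction, given the partition $\{E_1,\dots,E_\ell\}$ and an orientation $D$ realising condition (ii), I would take $\sigma'$ to be the signature whose positive edge set is $E_1$; by (i), $\sigma'$ is inversing equivalent to $\sigma$. Writing $\delta(v)$ for the common value $\overleftarrow{d_{E_i}}(v)-\overrightarrow{d_{E_i}}(v)$ guaranteed by (ii), the positive degree-excess of $\sigma'$ at $v$ under $D$ is exactly $\delta(v)$, while the negative degree-excess telescopes to $\sum_{i=2}^{\ell}\delta(v)=(\ell-1)\delta(v)$. This is Equation~\eqref{equ:moduloR-orientation}.

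For the ``only if'' direction, let $\sigma'$ and $D$ witness the modulo $\ell$-orientation of $(G,\sigma)$, set $E_1 := E^+_{(G,\sigma')}$, and let $\delta(v) := \overleftarrow{d^+}(v)-\overrightarrow{d^+}(v)$. Equation~\eqref{equ:moduloR-orientation} says that the sub-digraph $D^-$ induced by the negative edges has excess $(\ell-1)\delta(v)$ at every vertex. The task reduces to edge-partitioning $D^-$ into $\ell-1$ sub-digraphs $E_2,\dots,E_\ell$, each with excess exactly $\delta(v)$ at every $v$. Once this is achieved, condition (ii) is immediate, and condition (i) follows from a parity check: an excess of $\delta(v)$ forces $d_{E_i}(v)\equiv \delta(v)\equiv d^+(v)\pmod{2}$, so $E_1\triangle E_i$ is an even-degree subgraph and Lemma~\ref{lem:Cycle-Switch-Equiv} produces the required inversing-equivalent signature.

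The main obstacle, and the combinatorial heart of the proof, is exhibiting this partition. I would adjoin a super-vertex $s$ to $D^-$ and attach $(\ell-1)|\delta(v)|$ auxiliary arcs between $s$ and each $v$, oriented from $v$ to $s$ when $\delta(v)>0$ and from $s$ to $v$ when $\delta(v)<0$, so as to cancel the imbalance at $v$; since $\sum_v \delta(v)=0$, the vertex $s$ is itself balanced, so the augmented digraph $\widetilde D$ is Eulerian. A directed Eulerian decomposition expresses $\widetilde D$ as arc-disjoint closed walks, each either contained in $E^-$ (a ``pure cycle'') or visiting $s$ exactly once, the latter corresponding to a directed $E^-$-path from some $v^-$ with $\delta(v^-)<0$ to some $v^+$ with $\delta(v^+)>0$. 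Recording each such path as an edge of a bipartite multigraph $B$ on $\{v:\delta(v)<0\}\cup\{v:\delta(v)>0\}$, every vertex $v$ of $B$ has degree $(\ell-1)|\delta(v)|$. Splitting each vertex of $B$ into $|\delta(v)|$ copies and distributing its incident edges so that every copy receives exactly $\ell-1$ edges yields a bipartite $(\ell-1)$-regular multigraph; by K\"{o}nig's edge-coloring theorem this decomposes into $\ell-1$ perfect matchings, which, upon identifying the copies, give an edge-partition of $B$ in which each class contains exactly $|\delta(v)|$ edges at every vertex. Distributing the pure cycles arbitrarily among the $\ell-1$ classes completes the partition of $E^-$, and a direct count of contributions (paths contribute $\pm 1$ at their endpoints and $0$ elsewhere; pure cycles contribute $0$) confirms that every class has excess precisely $\delta(v)$ at every vertex, as required.
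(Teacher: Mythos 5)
Your proof is correct, and the forward (``if'') direction coincides with the paper's telescoping argument. For the harder converse you take a genuinely different, though related, route. The paper lifts in/out pairs of equally-signed arcs at every vertex until each vertex is a source or a sink, then splits vertices to obtain an $\ell$-regular bipartite digraph in which the positive edges already form a perfect matching, and applies K\"onig's theorem to the remaining $(\ell-1)$-regular part; all $\ell$ classes, including $E_1$, come out of that single construction. You instead fix $E_1=E^+_{(G,\sigma')}$ from the start and decompose only the negative subdigraph, balancing it with a super-vertex $s$, extracting a cycle/path decomposition, and applying K\"onig's theorem to the auxiliary bipartite path-multigraph after splitting its vertices. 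The two proofs share the same engine (splitting down to a regular bipartite multigraph plus K\"onig) --- indeed the paper's lifted arcs correspond exactly to your directed paths --- but your version makes the path decomposition explicit, and your verification of condition (i) via the parity computation $d_{E_i}(v)\equiv\delta(v)\equiv d_{E_1}(v)\pmod 2$, so that $E_1\triangle E_i$ is an even-degree subgraph and Lemma~\ref{lem:Cycle-Switch-Equiv} applies, is cleaner than the paper's corresponding argument (which is only spelled out later, inside the proof of Lemma~\ref{lem:EulerianEquivalence}). One detail to fix: with the paper's convention that $\overleftarrow{d}$ denotes \emph{out}-degree, a vertex with $\delta(v)>0$ has out-excess $(\ell-1)\delta(v)$ in $D^-$, so the auxiliary arcs at such a vertex must be oriented from $s$ to $v$ (the opposite of what you wrote) for $\widetilde{D}$ to be balanced; the paths through $s$ then run from $\delta>0$ vertices to $\delta<0$ vertices, and the final excess count comes out as $+\delta(v)$ rather than $-\delta(v)$. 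This is a uniform sign reversal and does not affect the structure of the argument.
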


	\begin{proof}
		%One direction is easy. 
		Suppose that $E(G)$ is partitioned into $E_1, E_2,\ldots, E_{\ell}$, each $E_i$ being the set of positive edges of the signature $\sigma_i$ where each $\sigma_i$ is inversing equivalent to $\sigma$, and  $D$ is an orientation satisfying Condition~\ref{E2} of the theorem. We then add up Equations~(\ref{equ:EiPartition}) for the pairs $(1, j)$, $j=2,3, \ldots, \ell$, and we obtain Equation~(\ref{equ:moduloR-orientation}) for a modulo $\ell$-orientation on $(G, \sigma)$, with respect to the inversing-equivalent signed graph $(G, \sigma_1)$.

		Conversely, suppose that, with respect to an inversing-equivalent signature $\sigma'$, the signed graph $(G, \sigma')$ admits a modulo $\ell$-orientation $D$. 
		If at a vertex $v$ of $G$ there are both incoming and outgoing edges of the same sign with respect to the signature $\sigma'$, then let $uv$ and $vw$ be such a pair of an incoming edge and an outgoing edge. We delete these two edges and add the oriented edge $uw$ with the same sign as $vw$. We repeat this operation until there are no such pair of edges. Let $(G_1, \sigma')$ be the resulting signed graph and let $D'$ be the resulting orientation on $(G_1, \sigma')$. 
		It follows from the construction that 
		\begin{enumerate}
			\item $V(G_1)=V(G)$ (possibly $G_1$ contains isolated vertices),
			\item $\forall v \in V(G)$, $\overleftarrow{d^{+}_{\scriptscriptstyle G}}(v)-\overrightarrow{d^{+}_{\scriptscriptstyle G}}(v)=\overleftarrow{d^{+}_{\scriptscriptstyle G_{\scriptscriptstyle 1}}}(v)-\overrightarrow{d^{+}_{\scriptscriptstyle G_{\scriptscriptstyle 1}}}(v)$ and $ \overleftarrow{d^{-}_{\scriptscriptstyle G}}(v)-\overrightarrow{d^{-}_{\scriptscriptstyle G}}(v)=\overleftarrow{d^{-}_{\scriptscriptstyle G_{\scriptscriptstyle 1}}}(v)-\overrightarrow{d^{-}_{\scriptscriptstyle G_{\scriptscriptstyle 1}}}(v).$
		\end{enumerate}
		
		Thus $D'$ is a modulo $\ell$-orientation on $(G_1,\sigma')$, i.e., at each vertex $v$, $(\ell-1)d^{+}_{\scriptscriptstyle G_{\scriptscriptstyle 1}}(v)= d^{-}_{\scriptscriptstyle G_{\scriptscriptstyle 1}}(v)$. We  split each vertex $v$ into $d^{-}(v)$ copies $v_1,v_2, \ldots, v_{d^{-}(v)}$ where each $v_i$ takes one positive edge incident to $v$ and $\ell-1$ negative edges incident to $v$. Let $(G',\sigma'')$ be the resulting signed graph together with inherited orientation $D'$. Note that $(G',\sigma'')$ (together with $D'$) is an $\ell$-regular graph oriented in a way that each vertex is either a source or a sink and where each vertex has exactly one positive edge incident to it. Thus first of all the underlying graph is a bipartite graph with source vertices forming one part and sink vertices forming the other part. Secondly, the set of positive edges, denoted by $E'_1$, form a perfect matching. We now consider the $(\ell-1)$-regular subgraph of $G'$ obtained by removing edges of $E'_1$. By K\"onig's theorem, we can partition the edges of this $(\ell-1)$-regular bipartite graph into $\ell-1$ perfect matchings, denoted by $E_2', E_3', \ldots, E_{\ell}'$.  Note that there is a clear correspondence between the edges of $G$ and $G'$. Moreover, with respect to this natural correspondence between the edges, the orientation on the edges will remain the same. So the corresponding edge sets $E_1, E_2,\ldots, E_{\ell}$ in $G$ form the partition as required.
	\end{proof}
	
	We note that for $\ell=2k+1$, Definition~\ref{def:ModuloOrientation} is equivalent to the classic definition of modulo $(2k+1)$-orientation of graphs in the literature, in which  a modulo $(2k+1)$-orientation is defined as an orientation such that for each vertex the in-degree is congruent to out-degree modulo $2k+1$. 
	As observed before, when $\ell=2k+1$, we only consider signed graphs $(G, +)$, and when $\ell =2k$ is even, we only need to consider signed Eulerian graphs. Lemma~\ref{lem:Jaeger} was proved in \cite{J88}.
	
	\begin{lemma}\label{lem:Jaeger}{\rm \cite{J88}}
		A graph admits a circular $\frac{2k+1}{k}$-flow if and only if it admits a modulo $(2k+1)$-orientation.
	\end{lemma}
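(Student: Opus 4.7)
The plan is to pass through an equivalent integer reformulation in which a circular $\frac{2k+1}{k}$-flow on $G$ is realized by an orientation $D$ together with $f:E(G)\to\{k,k+1\}$ satisfying $\partial_D f(v)=0$, and then to absorb the two possible values into a choice of orientation by reducing everything modulo $2k+1$.

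First I would pass to this integer reformulation: by the standard discretisation underlying Theorem~\ref{thm:Equiavlences} (which for graphs applies equally well to $p=2k+1$, $q=k$, as no $p/2$ restriction arises in the absence of negative edges), $G$ admits a circular $\frac{2k+1}{k}$-flow if and only if it admits a $(2k+1,k)$-flow, that is, an orientation $D$ and $f:E(G)\to\{k,k+1\}$ with $\partial_D f(v)=0$ at every vertex $v$.

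For the forward direction, given such $(D,f)$ I would reverse the orientation of every edge on which $f$ takes the value $k+1$, obtaining an orientation $D'$; after replacing the value on each reversed edge by $-(k+1)$, flow conservation is preserved. Since $-(k+1)\equiv k\pmod{2k+1}$, every edge now carries a value congruent to $k$ modulo $2k+1$. Reducing the flow identity modulo $2k+1$ gives $k\bigl(\overleftarrow{d_{D'}}(v)-\overrightarrow{d_{D'}}(v)\bigr)\equiv 0\pmod{2k+1}$ at every vertex $v$, and because $\gcd(k,2k+1)=1$ this is equivalent to $\overleftarrow{d_{D'}}(v)\equiv\overrightarrow{d_{D'}}(v)\pmod{2k+1}$, which is exactly the defining condition of a modulo $(2k+1)$-orientation.

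For the converse, starting from a modulo $(2k+1)$-orientation $D$ of $G$, I would set $f(e)=k$ on every edge; then $\partial_D f(v)=k\bigl(\overleftarrow{d_D}(v)-\overrightarrow{d_D}(v)\bigr)\equiv 0\pmod{2k+1}$, so $(D,f)$ is a modulo $(2k+1)$-flow. Applying Lemma~\ref{lem:Tutte} produces an integer $(2k+1)$-flow $\tilde f$ with $\tilde f(e)\equiv k\pmod{2k+1}$ and $|\tilde f(e)|\le 2k$; the only integers in that range are $k$ and $-(k+1)$. Reversing the orientation on edges where $\tilde f(e)=-(k+1)$ and taking absolute values then yields a $(2k+1,k)$-flow, whence a circular $\frac{2k+1}{k}$-flow. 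The only delicate point in the whole argument is the careful sign/orientation bookkeeping that packages the two possible flow values $k,k+1$ into the single residue class $k$ modulo $2k+1$, and the coprimality $\gcd(k,2k+1)=1$ is what makes the passage between flow conservation and the degree congruence immediate; no further obstacle is anticipated.
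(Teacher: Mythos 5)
Your proposal is correct. Note, however, that the paper offers no proof of this lemma at all: it is stated as a known result and attributed to Jaeger \cite{J88} (``Lemma~\ref{lem:Jaeger} was proved in \cite{J88}''), so there is no in-paper argument to compare against. What you have written is the standard classical proof, and each step checks out: the discretisation to a $(2k+1,k)$-flow with values in $\{k,k+1\}$ is legitimate for ordinary graphs even though Definition~\ref{def:p/q-flow} asks for $p$ even (that parity is only used to describe the admissible range on negative edges); the reorientation trick placing every edge value in the residue class $k \pmod{2k+1}$ together with $\gcd(k,2k+1)=1$ gives the degree congruence; and in the converse the appeal to Lemma~\ref{lem:Tutte} correctly pins $\tilde f(e)$ to $\{k,-(k+1)\}$ as the only integers of absolute value at most $2k$ in that residue class. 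No gap.
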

	
	As an extension of Lemma~\ref{lem:Jaeger} to the case that $\ell=2k$ is even, we show below that a signed Eulerian graph admits a circular $\frac{4k}{2k-1}$-flow if and only if it admits a modulo $2k$-orientation. In fact, we obtain some more equivalent properties in the following lemma.

	\begin{lemma}\label{lem:EulerianEquivalence}
		Let $\hat{G}$ be a signed Eulerian graph. Then the following statements are equivalent.
		\begin{enumerate}[label=(\arabic*)]
			\item $\hat{G}$ admits a circular $\frac{4k}{2k-1}$-flow.
			\item $\hat{G}$ admits a modulo $4k$-flow $(D, f)$ such that $f(e)\in\{2k-1, 2k+1\}$ for each positive edge $e$, and $f(e)\in\{-1, 1\}$ for each negative edge $e$.
			\item $\hat{G}$ admits an orientation $D$ such that $\overleftarrow{d_{\scriptscriptstyle D}}(v)-\overrightarrow{d_{\scriptscriptstyle D}}(v)\equiv 2k\cdot d^+_{\scriptscriptstyle \hat{G}}(v)~\pmod{4k}$ for each vertex $v\in V(G)$.
			\item $\hat{G}$ admits a modulo $2k$-orientation. 
		\end{enumerate}
	\end{lemma}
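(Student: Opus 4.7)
The plan is to close the cycle $(4) \Rightarrow (1) \Rightarrow (2) \Rightarrow (3) \Rightarrow (4)$, using Theorem~\ref{thm:Equiavlences} to power three of the implications while the last is handled by Hoffman's integer circulation theorem.

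For $(4)\Rightarrow(1)$, given $\sigma'$ inversing-equivalent to $\sigma$ and $D$ with $(2k-1)p_{\sigma'}(v)=n_{\sigma'}(v)$, I set $f(e)=2k-1$ on each $\sigma'$-positive edge and $f(e)=-1$ on each $\sigma'$-negative edge; conservation at $v$ becomes exactly $(2k-1)p_{\sigma'}(v)-n_{\sigma'}(v)=0$, so $(D,f)$ is a modulo $(4k,2k-1)$-flow on $(G,\sigma')$, which Theorem~\ref{thm:Equiavlences} promotes to a circular $\frac{4k}{2k-1}$-flow, and the inversing-invariance of $\Phi_c$ transports the same bound to $(G,\sigma)$. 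For $(1)\Rightarrow(2)$, Theorem~\ref{thm:Equiavlences} supplies a modulo $(4k,2k-1)$-flow with values in $\{2k-1,2k,2k+1\}$ on positive edges and $\{0,1,4k-1\}$ on negative ones; reducing conservation modulo $2$ at each vertex and using the Eulerian assumption force the set $H$ of edges with even flow value ($2k$ on positive, $0$ on negative) to meet each vertex in an even number of edges, so $H$ decomposes into edge-disjoint cycles, and adding $+1$ on forward arcs and $-1$ on backward arcs of each cycle preserves conservation modulo $4k$ while shifting every even value to an admissible odd neighbour. For $(2)\Rightarrow(3)$, reversing each arc with $f(e)\equiv -1 \pmod{2k}$ produces $D_g$ carrying $f=2k+1$ on positive edges and $f=1$ on negative edges; conservation $(2k+1)p^+(v)+n^-(v)\equiv 0\pmod{4k}$, together with the parity identity $p^+(v)\equiv d^+(v)\pmod{2}$, rearranges to $\overleftarrow{d_{D_g}}(v)-\overrightarrow{d_{D_g}}(v)\equiv 2k\,d^+(v)\pmod{4k}$.

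The main step is $(3)\Rightarrow(4)$. Write the net out-degree in (3) as $\overleftarrow{d_D}(v)-\overrightarrow{d_D}(v)=2kM(v)$, noting $M(v)\equiv d^+(v)\pmod{2}$ and $\sum_v M(v)=0$. I look for $x:E(G)\to\{0,1\}$ whose net $D$-out-degree at each $v$ equals $\tau(v):=(2k-1)M(v)$; then defining $\sigma'$ by $E^-_{\sigma'}:=\{e:x(e)=1\}$ yields $p_{\sigma'}(v)=M(v)$ and $n_{\sigma'}(v)=(2k-1)M(v)$, realising the modulo $2k$-orientation with the same $D$. Existence of $x$ follows from Hoffman's integer circulation theorem: summing (3) over any $S\subseteq V$ gives the identity $|\overleftarrow{(S,S^c)}|-|\overrightarrow{(S,S^c)}|=2kM(S)$, which reduces Hoffman's cut inequalities $\tau(S)\le|\overleftarrow{(S,S^c)}|$ and $-\tau(S)\le|\overrightarrow{(S,S^c)}|$ to the easy bounds $(2k-1)|M(S)|\le|\overrightarrow{(S,S^c)}|$ when $M(S)\le 0$ and symmetrically $(2k-1)|M(S)|\le|\overleftarrow{(S,S^c)}|$ when $M(S)\ge 0$, each of which follows from $2k|M(S)|\le\max(|\overleftarrow{(S,S^c)}|,|\overrightarrow{(S,S^c)}|)$. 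Reducing $\phi_x(v)=\tau(v)$ modulo $2$ yields $|\{e\ni v:x(e)=1\}|\equiv d^-(v)\pmod 2$, so $\{e:x(e)=1\}$ is a $T_\sigma$-join and hence $\sigma'$ is inversing-equivalent to $\sigma$ by the $T$-join characterisation from Section~2. The main obstacle is the careful tracking of signs and orientations needed to verify the cut inequalities cleanly.
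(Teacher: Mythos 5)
Your proposal is correct, and three of the four implications coincide with the paper's argument: $(4)\Rightarrow(1)$ is the same direct flow assignment $f=2k-1$ on positive and $f=-1$ on negative edges; $(1)\Rightarrow(2)$ is the same parity/cycle-decomposition adjustment of the even-valued edge set (your ``$+1$ on forward, $-1$ on backward arcs of each cycle'' is exactly the paper's segment-alternating modification, phrased as adding a unit circulation); and $(2)\Rightarrow(3)$ reverses precisely the same set of arcs as the paper's auxiliary function $g=f+f'$, just described differently. Where you genuinely diverge is $(3)\Rightarrow(4)$. The paper routes this through Theorem~\ref{thm:mod k-orientation}: it lifts arc pairs until every vertex is a source or sink, splits vertices to obtain a $2k$-regular bipartite digraph, applies K\"onig's theorem to extract $2k$ perfect matchings, and pulls these back to the partition $E_1,\dots,E_{2k}$ demanded by that theorem. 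You instead construct the inversing-equivalent signature directly: writing the net out-degree as $2kM(v)$, you seek a $\{0,1\}$-valued $x$ with divergence $(2k-1)M(v)$, verify the Hoffman/Gale cut conditions from the identity $|\delta^+(S)|-|\delta^-(S)|=2kM(S)$, and identify $\{e:x(e)=1\}$ as a $T$-join via the parity computation, so that the same orientation $D$ witnesses Definition~\ref{def:ModuloOrientation}. This is a clean and correct alternative; it is more self-contained (no appeal to Theorem~\ref{thm:mod k-orientation} or K\"onig's theorem and none of the bookkeeping about edge correspondences under lifting), at the cost of not producing the partition structure that Theorem~\ref{thm:mod k-orientation} provides. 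The only cosmetic issue is that your cut inequalities mix the paper's (internally inconsistent) conventions for $\overleftarrow{(S,S^c)}$ versus $\overrightarrow{(S,S^c)}$; the estimate $(2k-1)|M(S)|\le \max(|\delta^+(S)|,|\delta^-(S)|)$ is valid under either reading, so this is a matter of notation to tidy rather than a gap.
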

	
	\begin{proof}
		$(1) \Rightarrow (2)$. Assume that $\hat{G}$ is a signed Eulerian graph and $(D, \varphi)$ is a circular $\frac{4k}{2k-1}$-flow in $\hat{G}$. Then $\varphi(e)\in \{2k-1, 2k, 2k+1\}$ for each positive edge $e$ and $\varphi(e)\in \{4k-1, 0, 1\}$ for each negative edge $e$. Since $G$ is Eulerian,  the set $E'=\{e \in E(G): \varphi(e) \in \{0,2k\}\}$ induces an even-degree subgraph. Partition $E'$ into edge-disjoint cycles. For each cycle $C$ in this partition, we modify $\varphi$ on $C$ as follows:  If $C$ is a directed cycle, then let $\varphi'(e) = \varphi(e)+1$ for each edge $e \in C$. Otherwise, the cycle $C$ is divided into segments $P_1,P_2,\ldots, P_{2t}$, where each $P_i$ is a directed path and for each $i$, one of $P_i$ or $P_{i+1}$ is a forward directed path and the other is a backward directed path.  Let $\varphi'(e)=\varphi(e)+1$ for $e \in P_1,P_3, \ldots, P_{2t-1}$ and  $\varphi'(e)=\varphi(e)-1$ for $e \in P_2,P_4, \ldots, P_{2t}$. 
		For edges $e \notin E'$, let $\varphi'(e)=\varphi(e)$. Then $(D, \varphi')$ is a modulo $4k$-flow in $\hat{G}$ where $\varphi'(e) \in \{2k+1,2k-1\}$ for each positive edge $e$ and $\varphi'(e) \in \{-1,1\}$ for each negative edge $e$. 
		
		\medskip 
		
		$(2) \Rightarrow (3)$. Assume that $\hat{G}$ admits a modulo $4k$-flow $(D, f)$ such that $f(e)\in\{2k-1, 2k+1\}$ for each positive edge $e$, and $f(e)\in\{-1, 1\}$ for each negative edge $e$. We define a mapping $f':E(G)\to \{0, 2k\}$ as follows: $f'(e)=2k$ for each positive edge $e$, and $f'(e)=0$ for each negative edge $e$. As $\overleftarrow{d^+_{\scriptscriptstyle D}}(v)-\overrightarrow{d^+_{\scriptscriptstyle D}}(v)\equiv d^+_{\scriptscriptstyle \hat{G}}(v)~\pmod{2}$, we have $\partial_{\scriptscriptstyle D}f'(v)\equiv 2k\cdot d^+_{\scriptscriptstyle \hat{G}}(v)~\pmod {4k}$. Let $g=f+f'$. It is easy to observe that $g(e)~\pmod {4k}\in \{-1,1\}$ and as $\partial_{\scriptscriptstyle D}f(v)\equiv 0~\pmod {4k}$, we have $\partial_{\scriptscriptstyle D}g(v)\equiv 2k\cdot d^+_{\scriptscriptstyle \hat{G}}(v)~\pmod {4k}$. We define a new orientation $D'$ based on $D$ and $g$ as follows: an edge is oriented in $D'$ the same as in $D$ if $g(e)\equiv 1~\pmod {4k}$ and is orientated in $D'$ opposite as in $D$ if $g(e)\equiv -1~\pmod {4k}$. Such an orientation $D'$ satisfies that $\overleftarrow{d_{\scriptscriptstyle D'}}(v)-\overrightarrow{d_{\scriptscriptstyle D'}}(v)\equiv 2k\cdot d^+_{\scriptscriptstyle \hat{G}}(v)~\pmod{4k}$ for each vertex $v\in V(G)$.

		\medskip
		
		$(3) \Rightarrow (4)$. Let $D$ be an orientation on $\hat{G}=(G,\sigma)$ such that $\overleftarrow{d_{\scriptscriptstyle D}}(v)-\overrightarrow{d_{\scriptscriptstyle D}}(v)\equiv 2k\cdot d^+_{\scriptscriptstyle \hat{G}}(v)~\pmod{4k}$. We may regard $D$ as a directed graph. Following the methods of the proof of Theorem~\ref{thm:mod k-orientation}, we build a new digraph as follows: First we lift pair of arcs $(u,v), (v,w)$, and continue this process until we obtain a digraph $D_1$ where each vertex is either a source or a sink. We note that $D_1$ might not be unique and depends on how we apply the process. However, we have $\overleftarrow{d_{\scriptscriptstyle D_{1}}}(v)-\overrightarrow{d_{\scriptscriptstyle D_1}}(v)= \overleftarrow{d_{\scriptscriptstyle D}}(v)-\overrightarrow{d_{\scriptscriptstyle D}}(v)$ for any vertex $v\in V(D_1)$. Thus in the underlying graph of $D_1$, we conclude that the degree of each vertex is a multiple of $2k$. For the final step of building the new digraph, we split vertices of $D_1$ so to have a digraph $D_2$ whose underlying graph is a $2k$-regular bipartite graph. Being a bipartite regular graph, by K\"onig's theorem, we can partition the edges of this $2k$-regular bipartite graph into $2k$ perfect matchings. Let $E_1, E_2, \ldots, E_{2k}$ be such a partition. By identifying the arc set of $D_1$ with $D_2$, we note that in the process of constructing $D_2$ from $D$, each edge of $D$ has a unique corresponding edge in $D_2$. More precisely, edges of $D_2$ correspond to a partition of edges of $D$ into directed paths. This leads to an edge partition $E_1, E_2, \ldots, E_{2k}$ of $\hat{G}$. For each $i\in [2k]$, we define $\sigma_i$ to be the signature on $G$ where positive edges are those in $E_i$. By Theorem~\ref{thm:mod k-orientation}, it suffices to show that (i) $\sigma_i$ is inversing equivalent to $\sigma$, and (ii) under the orientation $D$ of $\hat{G}$,  $\overleftarrow{d_{\scriptscriptstyle E_i}}(v)-\overrightarrow{d_{\scriptscriptstyle E_i}}(v)= \overleftarrow{d_{\scriptscriptstyle E_j}}(v)-\overrightarrow{d_{\scriptscriptstyle E_j}}(v)$.

		Let $V^o(\hat{G})=\{v\in V(\hat{G})\mid d^+_{\scriptscriptstyle \hat{G}}(v) \text{~is odd}\}$. By Lemma~\ref{lem:Cycle-Switch-Equiv} and noting that $\hat{G}$ is Eulerian, to show that two signatures $\sigma$ and $\sigma'$ are inversing equivalent, it suffices to prove that $V^o(G, \sigma)=V^o(G, \sigma')$. Note that for each vertex $w$ in $V^o(G, \sigma)$, we have $\overleftarrow{d_{\scriptscriptstyle D}}(w)-\overrightarrow{d_{\scriptscriptstyle D}}(w)\equiv 2k~\pmod{4k}$. Hence, it has been split into odd number of vertices in $D_2$, and each of whom is exactly in one edge of $E'_i$, $i\in [2k]$. As each edge of $E'_i$ is represented by an edge-disjoint path in $E_i$ and $\hat{G}$ is Eulerian, in $(G, \sigma_i)$, also there is an odd number of positive edges incident to $w$. Hence $V^o(G, \sigma)=V^o(G, \sigma_i)$ for each $i\in [2k]$. This completes the proof of (i). Since in $D_1$ each vertex is either a sink or source, each of it contains the same number of edges from each of $E_i$. The edge pair $(u,v), (v,w)$ which has been lifted contributes $0$ to the term ``$\overleftarrow{d_{\scriptscriptstyle E_i}}(v)-\overrightarrow{d_{\scriptscriptstyle E_i}}(v)$" as they are in the same class $E'_i$ in $D_1$. So the value $\overleftarrow{d_{\scriptscriptstyle E_i}}(v)-\overrightarrow{d_{\scriptscriptstyle E_i}}(v)$ is the number of edges of $D_1$ in $E_i$. Hence, under the orientation $D$ of $\hat{G}$, $\overleftarrow{d_{\scriptscriptstyle E_i}}(v)-\overrightarrow{d_{\scriptscriptstyle E_i}}(v)= \overleftarrow{d_{\scriptscriptstyle E_j}}(v)-\overrightarrow{d_{\scriptscriptstyle E_j}}(v)$. 
		
		\medskip
		
		$(4) \Rightarrow (1)$. Assume that, with respect to an inversing-equivalent signature, $\hat{G}$ admits an orientation $D$ such that for each vertex $v$,
		$(2k-1)(\overleftarrow{d^{+}}(v)-\overrightarrow{d^{+}}(v))=\overleftarrow{d^{-}}(v)-\overrightarrow{d^{-}}(v).$ Let $f$ be defined as follows: $f(e)=2k-1$ if $e$ is a positive edge and $f(e)=-1$ if $e$ is a negative edge. Based on $(D, f)$, for each vertex $v$, we have    
		$$\partial_{\scriptscriptstyle D}f(v) =(2k-1)\cdot(\overleftarrow{d^+_{\scriptscriptstyle D}}(v)-\overrightarrow{d^+_{\scriptscriptstyle D}}(v))+(-1)\cdot(\overleftarrow{d^-_{\scriptscriptstyle D}}(v)-\overrightarrow{d^-_{\scriptscriptstyle D}}(v))=0.$$
		Such a pair $(D, f)$ is a modulo $(4k, 2k-1)$-flow of $\hat{G}$ and thus $\hat{G}$ admits a circular  $\frac{4k}{2k-1}$-flow.
	\end{proof}
	
	Observe that a signed Eulerian planar graph $(G, \sigma)$ admits a modulo $2k$-orientation if and only if its dual signed graph $(G^*, \sigma^*)$ admits a homomorphism to $C_{ \!\scriptscriptstyle -2k}$. However, a translation of this theorem to a homomorphism theorem (stated below) holds for all signed graphs. Considering the similarity of the proofs, we omit its proof. 
	
	\begin{theorem}\label{thm:Packing-Vrsus-C_k-coloring}
		A signed graph $(G, \sigma)$ admits a homomorphism to $-C_{\CK}$ if and only if there is a partition $E_1,E_2, \ldots E_k$ of edges of $G$ such that the following hold:
		\begin{enumerate}[label=(\roman*)]
			\item Each $E_i$ is the set of positive edges of a signature which is switching equivalent to $\sigma$.
			\item There is an orientation $D$ on $G$ with the following property: For each cycle $C$ of $G$ there is a constant $w_{_{C}}$ such that the difference between the number of the forward and the number of the backward edges of $C$ that are in $E_i$, for any $i\in [k]$, is $w_{_{C}}$.
		\end{enumerate} 
	\end{theorem}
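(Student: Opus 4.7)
The plan is to prove both directions constructively, following the template of Theorem~\ref{thm:mod k-orientation} but transported from the cut side to the cycle side. Throughout, label the vertices of $-C_{\CK}$ as $v_0, v_1, \ldots, v_{k-1}$ in cyclic order and orient each edge $e_i = v_{i-1}v_i$ from $v_{i-1}$ to $v_i$, so that $-C_{\CK}$ becomes a directed $k$-cycle with a fixed signature $\pi$.

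For the forward direction, given a homomorphism $\phi : (G, \sigma) \to -C_{\CK}$, I would first switch $\sigma$ to a representative $\sigma'$ on which $\phi$ preserves edge signs, then set $E_i = \phi^{-1}(e_i)$ and orient each $e \in E_i$ so that $\phi$ maps it in the direction of $e_i$. The edges of $E_i$ all carry sign $\pi(e_i)$ under $\sigma'$, so the signature $\sigma_i$ making $E_i$ the positive edge set coincides with $\sigma'$ up to a cut-switching; by Zaslavsky's cycle-sign criterion this yields $\sigma_i \sim \sigma$, giving (i). Condition (ii) is then the winding-number identity: for any cycle $C$ of $G$, $\phi(C)$ is a closed walk in $-C_{\CK}$ whose net winding number $w_C$ equals the forward-minus-backward count on every edge $e_i$, and pulling back through $\phi$ transfers this equal count to every class $E_i$.

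For the backward direction, given the partition and orientation $D$ satisfying (i) and (ii), fix $v_0 \in V(G)$ and set $\phi(v_0) = 0 \in \mathbb{Z}/k\mathbb{Z} = V(-C_{\CK})$. For any other vertex $v$, choose a walk $W$ from $v_0$ to $v$ and define $\phi(v) = \sum_{e \in W}\varepsilon(e) \pmod{k}$, where $\varepsilon(e) = +1$ if $e$ is traversed forward in $W$ and $-1$ if backward. Well-definedness is exactly (ii): along any cycle $C$, summing the common value over the $k$ classes gives $\sum_{e \in C}\varepsilon(e) = \sum_{i=1}^{k}w_C = k w_C \equiv 0 \pmod{k}$. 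By construction $\phi$ sends every arc of $D$ to an arc of $-C_{\CK}$, and to turn $\phi$ into a signed-graph homomorphism one uses (i): for every cycle $C$, $\sigma(C) = \sigma_i(C) = (-1)^{|C|-|C\cap E_i|}$, and combining this with the parity relation $n_i^+ - n_i^- \equiv n_i^+ + n_i^- \pmod 2$ together with the winding identity $k w_C = \sum_i(n_i^+ - n_i^-)$ shows $\sigma(C) = \pi(\phi(C))$, so by Zaslavsky's criterion a switching of $\sigma$ exists making $\phi$ sign-preserving. The main obstacle will be precisely this sign bookkeeping — reconciling the partition-level statement (i) with the edge-by-edge sign requirement needed for $\phi$ to be a homomorphism — and handling the case distinction between $k$ even (where $-C_{\CK}$ carries a genuinely negative cycle so $G$ must be bipartite) and $k$ odd (where $-C_{\CK}$ is switching equivalent to $C_k$ and $(G,\sigma)$ must be balanced).
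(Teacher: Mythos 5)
Your proof is correct and follows exactly the route the paper has in mind: the paper omits a formal proof of this theorem (citing similarity with Theorem~\ref{thm:mod k-orientation}), but the ``visual version'' paragraph following it describes precisely your winding-number mechanism --- preimages of the edges of $-C_{\CK}$ yield the partition and the induced orientation in one direction, and in the other direction the potential function $\phi(v)=\sum_{e\in W}\varepsilon(e) \bmod k$ is well defined because $\sum_{i}(n_i^+-n_i^-)=kw_{_C}\equiv 0 \pmod k$ on every cycle. The sign bookkeeping you flag as the main obstacle does close exactly as you sketch, since condition (ii) gives the per-class identity $|C\cap E_i|\equiv n_i^+-n_i^-=w_{_C}\pmod 2$ for every $i$ (not merely the aggregate identity) and the same parity holds for the fibres of $\phi$, whence $\sigma(C)=(-1)^{|C|-|C\cap E_1|}=(-1)^{(k-1)w_{_C}}=\pi(\phi(C))$ and Zaslavsky's criterion applies uniformly, with no case distinction between odd and even $k$ required.
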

	
	A visual version of this theorem is as follows: Consider $-C_{\CK}$ as a directed cycle with one positive edge. Let $\varphi$ be a homomorphism of $(G, \sigma)$ to $-C_{\CK}$ and let $D$ be the orientation on $G$ induced by this homomorphism. Given a cycle $C$ of $(G, \sigma)$, the difference of forward and backward edges of $C$ then must be a multiple of $k$, say $w_{_{C}}\!\cdot \! k$ where $w_{_{C}}$ represents the number of times the cycle $C$ is winded around $-C_{\CK}$ by $\varphi$. Moreover, the sign of $w_{_{C}}$ represents if this winding is in the clockwise direction or the anticlockwise direction. The value of $w_{_C}$ can be computed using just one of the edges of $-C_{\CK}$, say $e$. More precisely, $w_{_C}$ is the difference of the number of forward edges in $C$ that are mapped to $e$ and the number of backward edges in $C$ that are mapped to $e$.
	
	The heart of this theorem then is that: Finding an orientation which is consistent with the above conclusion is enough to build a mapping of $(G,\sigma)$ to $-C_{\CK}$. A slightly simpler condition that implies the same result is as follows: Given $(G,\sigma)$ there is a switching-equivalent signature $\sigma'$ and an orientation $D$ such that in every cycle $C$ of $G$, with respect to $\sigma'$ and $D$, the difference of forward and backward on the negative edges is $(k-1)$-times of the difference of forward and backward on the positive edges.

	\section{Edge connectivity and circular flow index}\label{sec:Main}

	Seymour's $6$-flow theorem states that every $2$-edge-connected graph has a circular $6$-flow. By Proposition \ref{prop:2phi} and Lemma \ref{lem:FlowOfT_2(G)}, Seymour's $6$-flow theorem can be restated in signed graphs as below.
	
	\begin{theorem}{\em [Seymour's $6$-flow theorem restated]}\label{thm:12-flow}
		Every $2$-edge-connected signed graph admits a circular $12$-flow.
	\end{theorem}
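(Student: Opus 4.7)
The plan is to derive this statement directly from Seymour's classical $6$-flow theorem by packaging it through the two tools just established, namely Proposition~\ref{prop:2phi} and Lemma~\ref{lem:FlowOfT_2(G)}. The theorem is labelled a ``restatement'' because the implication goes both ways, so I would prove it as an equivalence: Seymour's $6$-flow theorem $\Longleftrightarrow$ Theorem~\ref{thm:12-flow}.

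For the forward direction (which is the assertion actually stated), I would argue as follows. Let $(G,\sigma)$ be a $2$-edge-connected signed graph. By definition of $k$-edge-connectedness for signed graphs given in the introduction, the underlying graph $G$ itself is $2$-edge-connected. Seymour's $6$-flow theorem then gives $\Phi_c(G)\le 6$. Now Proposition~\ref{prop:2phi} says precisely that $\Phi_c(G,\sigma)\le 2\Phi_c(G)$, so combining these yields $\Phi_c(G,\sigma)\le 12$. This is the complete proof of the stated bound; it is essentially a one-line application once the two earlier results are in hand.

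For the converse direction, which justifies the word ``restated,'' I would start from any $2$-edge-connected graph $G$ and consider the signed graph $T_2(G)$ from Lemma~\ref{lem:FlowOfT_2(G)}. The quick sub-claim to check is that $T_2(G)$ inherits $2$-edge-connectedness from $G$: any edge-cut of size one in $T_2(G)$ would either be one of the two edges of a replacement $2$-path (whose removal can be matched with the removal of the other edge in the path, contradicting the $2$-connectivity of the subdivision of a cycle) or, after contracting each subdivided path, yield a bridge of $G$; either way we contradict $2$-edge-connectedness of $G$. Assuming the signed version of the theorem, we get $\Phi_c(T_2(G))\le 12$, and Lemma~\ref{lem:FlowOfT_2(G)} gives $\Phi_c(G)=\tfrac{1}{2}\Phi_c(T_2(G))\le 6$, recovering Seymour's theorem.

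There is no serious obstacle in this proof; the main content is conceptual rather than computational. The only genuinely non-automatic point is the small sub-claim that $T_2(G)$ is $2$-edge-connected whenever $G$ is, and this is an easy structural observation about subdivisions. Everything else is a clean invocation of results proved earlier in the excerpt.
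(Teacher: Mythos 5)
Your proposal is correct and follows exactly the route the paper intends: the stated bound is Seymour's theorem combined with Proposition~\ref{prop:2phi}, and the converse (justifying ``restated'') uses $T_2(G)$ and Lemma~\ref{lem:FlowOfT_2(G)}. The paper gives no separate proof beyond citing these two results, so your write-up, including the small check that $T_2(G)$ inherits $2$-edge-connectedness, matches and slightly elaborates the intended argument.
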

	
	Similarly, Tutte's $5$-flow conjecture in graphs can be, equivalently, stated as:
	
	\begin{conjecture}{\em [Tutte's $5$-flow conjecture restated]}\label{conj:10-flow}
		Every $2$-edge-connected signed graph admits a circular $10$-flow.
	\end{conjecture}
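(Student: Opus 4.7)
\medskip

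\noindent\emph{Proof proposal.}
Since Conjecture~\ref{conj:10-flow} is a \emph{restatement} of Tutte's 5-flow conjecture (which remains open), the content to establish is not the conjecture itself but the equivalence of the two assertions. The plan is to give a two-way reduction, mirroring the argument used to restate Seymour's 6-flow theorem as Theorem~\ref{thm:12-flow}. The two tools available are exactly those referenced there: the doubling inequality in Proposition~\ref{prop:2phi} and the equality $\Phi_c(T_2(G))=2\Phi_c(G)$ from Lemma~\ref{lem:FlowOfT_2(G)}.

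For the forward direction, assume Tutte's 5-flow conjecture in its classical form: every bridgeless graph $G$ satisfies $\Phi_c(G)\le 5$. Given any 2-edge-connected signed graph $(G,\sigma)$, the underlying graph $G$ is in particular bridgeless, so $\Phi_c(G)\le 5$. Applying Proposition~\ref{prop:2phi} yields $\Phi_c(G,\sigma)\le 2\Phi_c(G)\le 10$, which is precisely the assertion of Conjecture~\ref{conj:10-flow}.

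For the reverse direction, assume that every 2-edge-connected signed graph admits a circular $10$-flow. Given a 2-edge-connected graph $G$, form the signed graph $T_2(G)$ obtained from $G$ by subdividing every edge once with the resulting path of length two declared negative. I would first verify that $T_2(G)$ is 2-edge-connected: any edge-cut of $T_2(G)$ either isolates a subdivision vertex (contributing two edges) or corresponds, after contracting the subdivision vertices back, to an edge-cut of $G$, which has size at least two by hypothesis. Applying the assumed restatement to $T_2(G)$ gives $\Phi_c(T_2(G))\le 10$, and Lemma~\ref{lem:FlowOfT_2(G)} then forces $\Phi_c(G)=\tfrac12\Phi_c(T_2(G))\le 5$, i.e., Tutte's 5-flow conjecture.

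Both implications are short once the two supporting results are in hand, so the main ``obstacle'' is merely the 2-edge-connectedness check for $T_2(G)$ and the observation that Proposition~\ref{prop:2phi} and Lemma~\ref{lem:FlowOfT_2(G)} are tight enough in this regime to match the constants $5$ and $10$. Nothing more delicate is required; the deep content of the conjecture remains the classical 5-flow problem itself.
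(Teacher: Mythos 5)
Your proposal is correct and follows exactly the route the paper intends: the paper justifies the restatement (as it does for Seymour's theorem in Theorem~\ref{thm:12-flow}) by invoking Proposition~\ref{prop:2phi} for one direction and Lemma~\ref{lem:FlowOfT_2(G)} applied to $T_2(G)$ for the other, which is precisely your two-way reduction. The only content here is this equivalence, since the statement itself is an open conjecture, and your filling-in of the 2-edge-connectedness check for $T_2(G)$ is a harmless elaboration of what the paper leaves implicit.
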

	
	For graphs of higher connectivity, stronger results can be obtained using the notion of group connectivity, introduced by Jaeger, Linial, Payan, and Tarsi \cite{JLPT92}.
	
	\begin{definition}
		Given a graph $G$, a mapping $\beta:V(G) \to \mathbb{Z}_k$ is called a \emph{$\mathbb{Z}_k$-boundary} if  $\sum\limits_{v\in V(G)} \beta(v)\equiv 0~\pmod k.$ A graph $G$ is said to be \emph{$\mathbb{Z}_k$-connected} if for every $\mathbb{Z}_k$-boundary $\beta$, there is an orientation $D$ on $G$ and a mapping $f: E(G) \to \mathbb{Z}_k$ such that for each vertex $v\in V(G)$, $\partial_{\scriptscriptstyle D} f(v) \equiv \beta(v)~\pmod{k}$.
	\end{definition}
	
	Proposition \ref{pro:Z_kconnectivity} and Theorem \ref{thm:Z_6Z_4} below were proved in \cite{JLPT92}. 
	
	\begin{proposition}\label{pro:Z_kconnectivity} {\rm \cite{JLPT92}}
		Let $G$ be a connected graph with an arbitrary orientation $D$. The following statements are equivalent:
		\begin{itemize}
			\item $G$ is $\mathbb{Z}_k$-connected.
			\item For any function $g: E(G)\to \mathbb{Z}_k$, there exists a modulo $k$-flow $(D, f)$ such that for each edge $e$ of $G$, $f(e)\not\equiv g(e)~\pmod k$.
		\end{itemize}
	\end{proposition}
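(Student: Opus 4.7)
The plan is to exploit a simple substitution under the fixed orientation $D$: the set of $\mathbb{Z}_k$-valued functions realizing a given boundary $\beta$ is an affine translate of the space of modulo $k$-flows, so ``avoid $g$ pointwise'' and ``nowhere zero'' correspond to one another via the map $h \leftrightarrow g - h$. I will read $\mathbb{Z}_k$-connectivity in its standard (nowhere-zero) form, namely that for every $\mathbb{Z}_k$-boundary $\beta$ there exists a nowhere-zero $h \colon E(G) \to \mathbb{Z}_k$ with $\partial_{\scriptscriptstyle D} h(v) \equiv \beta(v) \pmod k$ for every vertex $v$; this is in any case forced by specializing the second condition to the choice $g \equiv 0$.

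For the direction $(1)\Rightarrow(2)$, given any $g \colon E(G) \to \mathbb{Z}_k$ I would set $\beta(v) := \partial_{\scriptscriptstyle D} g(v)$. The identity $\sum_{v \in V(G)} \partial_{\scriptscriptstyle D} g(v) \equiv 0 \pmod k$ (each edge contributes $+g(e)$ at its head and $-g(e)$ at its tail) shows $\beta$ is a $\mathbb{Z}_k$-boundary. Using $\mathbb{Z}_k$-connectivity to pick a nowhere-zero $h$ with $\partial_{\scriptscriptstyle D} h \equiv \beta$ and letting $f := g - h$, one has $\partial_{\scriptscriptstyle D} f \equiv 0 \pmod k$, while $f(e) = g(e) - h(e) \not\equiv g(e) \pmod k$ for every edge $e$. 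For the converse $(2)\Rightarrow(1)$, given a $\mathbb{Z}_k$-boundary $\beta$, I would first produce \emph{some} $g_0 \colon E(G) \to \mathbb{Z}_k$ with $\partial_{\scriptscriptstyle D} g_0 \equiv \beta \pmod k$ by fixing a spanning tree $T$ of $G$, setting $g_0 \equiv 0$ on $E(G)\setminus E(T)$, and propagating values along $T$ inductively from the leaves inward; the hypothesis $\sum_v \beta(v) \equiv 0$ guarantees that the constraint at the final (root) vertex is automatically satisfied. Applying (2) to this $g_0$ yields a modulo $k$-flow $(D, f)$ with $f(e) \not\equiv g_0(e) \pmod k$ everywhere, and then $h := g_0 - f$ is a nowhere-zero realization of $\beta$.

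The only genuinely delicate point is conceptual rather than combinatorial: one must recognize that $h \mapsto g - h$ is an affine bijection between the realizations of a boundary $\beta$ and the modulo $k$-flows of $G$, and that it exchanges the constraints ``$h$ nowhere zero'' and ``$f \not\equiv g$ everywhere''. Once this substitution is isolated, both implications reduce to one-line verifications, so I do not expect any serious obstacle beyond fixing the correct reading of the definition of $\mathbb{Z}_k$-connectivity.
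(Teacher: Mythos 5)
Your proposal is correct. The paper does not prove this proposition --- it is quoted from \cite{JLPT92} --- and your argument is the standard one: the affine bijection $h \mapsto g-h$ between realizations of the boundary $\beta = \partial_{\scriptscriptstyle D} g$ and modulo $k$-flows, exchanging ``nowhere zero'' with ``avoids $g$ everywhere''. You were also right to restore the nowhere-zero requirement in the definition of $\mathbb{Z}_k$-connectivity (the paper's printed definition omits it, which would make the notion trivial), and your spanning-tree construction of a preimage $g_0$ of a given boundary is the standard way to close the converse direction. The only point left tacit is that realizability of all boundaries with a nowhere-zero function is independent of the chosen orientation (flip an arc and negate its value), which justifies working with the fixed $D$ throughout; this is routine.
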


	\begin{theorem} \label{thm:Z_6Z_4} {\rm \cite{JLPT92}}
		Every $3$-edge-connected graph is $\mathbb{Z}_6$-connected, and 
		every graph with two edge-disjoint spanning trees is $\mathbb{Z}_4$-connected. In particular, every $4$-edge-connected graph is $\mathbb{Z}_4$-connected.
	\end{theorem}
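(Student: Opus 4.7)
The plan is to prove the two assertions of Theorem \ref{thm:Z_6Z_4} separately, in both cases working with the equivalent ``edge-avoidance'' formulation of Proposition \ref{pro:Z_kconnectivity}: for each prescribed $g\colon E(G)\to\mathbb{Z}_k$, produce a modulo $k$-flow $(D,f)$ with $f(e)\not\equiv g(e)\pmod{k}$. This reformulation is more amenable to inductive and contraction arguments than the raw boundary-achievement definition.

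For the second assertion, assume $G$ has two edge-disjoint spanning trees $T_1,T_2$, root both at a common vertex $r$, and orient each tree away from $r$. Given $g\colon E(G)\to\mathbb{Z}_4$, I would build the desired flow $f$ in two sweeps. First, choose $f$ on $E(G)\setminus E(T_1)$: on each edge lying in neither tree, pick any value in the three-element set $\mathbb{Z}_4\setminus\{g(e)\}$; on the edges of $T_2$, assign values leaf-first while enforcing a residual parity condition at every non-root vertex. Second, the values on $T_1$ are forced by the flow conservation condition, and the parity constraint imposed on $T_2$ guarantees that each forced value still lies in $\mathbb{Z}_4\setminus\{g(e)\}$. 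This is essentially the two-tree construction underlying Jaeger's $4$-flow theorem for graphs with two edge-disjoint spanning trees. The ``in particular'' part then follows from the Nash--Williams/Tutte spanning-tree-packing theorem, which guarantees two edge-disjoint spanning trees in every $4$-edge-connected graph.

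For the first assertion, I would proceed by induction on $|V(G)|+|E(G)|$ combined with the standard contraction lemma for group connectivity: if $H$ is a $\mathbb{Z}_6$-connected subgraph of $G$, then $G$ is $\mathbb{Z}_6$-connected if and only if $G/H$ is. Any digon (a pair of parallel edges) is trivially $\mathbb{Z}_6$-connected, so whenever $G$ contains parallel edges one can contract them and recurse on the resulting $3$-edge-connected graph. When $G$ is simple, one locates either a triangle $T$ such that $G/T$ is still $3$-edge-connected (triangles are easily checked to be $\mathbb{Z}_6$-connected) or, failing that, a minimum $3$-edge-cut $(X,X^c)$ and analyzes $G[X]$ directly. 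Small base cases are verified using the splitting $\mathbb{Z}_6\cong\mathbb{Z}_2\oplus\mathbb{Z}_3$: the $\mathbb{Z}_2$-component avoidance reduces to a $T$-join existence problem, and the $\mathbb{Z}_3$-component avoidance is handled via a $3$-edge-cut analysis in the spirit of Seymour's $6$-flow argument.

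The main obstacle will be the reduction step for the $\mathbb{Z}_6$-connectivity assertion in the simple, large-girth case: contracting a small subgraph may destroy $3$-edge-connectivity by merging nontrivial $3$-edge-cuts, so the choice of reducible subgraph has to be made with care. The standard device is to work with a $3$-edge-cut of minimum side size and exploit the structure on the small side, which usually yields a $\mathbb{Z}_6$-connected configuration to contract. Combining this structural reduction with the $\mathbb{Z}_2\oplus\mathbb{Z}_3$ decomposition while simultaneously respecting the edge-avoidance constraint is the delicate part of the argument.
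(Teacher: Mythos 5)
The paper does not prove Theorem~\ref{thm:Z_6Z_4}: it is imported verbatim from \cite{JLPT92} (``Proposition~\ref{pro:Z_kconnectivity} and Theorem~\ref{thm:Z_6Z_4} below were proved in \cite{JLPT92}''), so there is no in-paper argument to compare yours against; what follows assesses your sketch against the known proof. Your treatment of the second assertion is essentially Jaeger's two-tree argument and can be completed, but the one point you leave implicit is the crux: why the parity constraints (values modulo the subgroup $\{0,2\}$ of $\mathbb{Z}_4$) at all non-root vertices of $T_1$ are simultaneously satisfiable. The clean justification is linear-algebraic rather than greedy: the forced value on a $T_1$-edge $e$ has parity equal to a constant plus $\sum f(e')$ over the cotree edges $e'$ in the fundamental cut of $e$, so the $|V(G)|-1$ conditions form a linear system over $\mathbb{Z}_2$ whose coefficient matrix is the fundamental-cut/cotree incidence matrix; this has full row rank precisely because the cotree contains the spanning connected subgraph $T_2$ (a dependency among rows would produce a nonempty edge-cut of $G$ meeting no cotree edge). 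Your leaf-first greedy version must reconcile two different tree orders ($T_1$ for forcing, $T_2$ for adjusting), and a $T_2$-edge used to fix the condition at one endpoint also enters the condition at its other endpoint; as stated this is not obviously consistent, though it is a fixable presentation issue. The ``in particular'' clause via Nash--Williams--Tutte is fine.

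The first assertion is where the genuine gap lies, for two reasons. First, your stated ``main obstacle'' is misdiagnosed: contracting a connected subgraph can never destroy $3$-edge-connectivity, since every edge-cut of $G/H$ lifts to an edge-cut of $G$; finding a contraction that preserves connectivity is not the difficulty. The real difficulty is that a simple, triangle-free, $3$-edge-connected graph of girth at least $6$ (the Heawood graph, or cubic graphs of large girth) contains \emph{no} small $\mathbb{Z}_6$-connected subgraph to contract: a cycle $C_n$ is $\mathbb{Z}_6$-connected only for $n\le 5$, and in the cubic case every minimum $3$-edge-cut is the trivial cut around a single vertex, so ``analyzing $G[X]$ for a minimum $3$-edge-cut'' yields nothing. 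This irreducible case is precisely where the content of the theorem sits; in \cite{JLPT92} it is handled by a Seymour-type construction (a chain of subgraphs grown by adding one or two edges at a time, combined with the $\mathbb{Z}_2\oplus\mathbb{Z}_3$ splitting, carried out in the boundary/avoidance setting rather than for a single nowhere-zero flow). Your proposal defers exactly this step to ``the spirit of Seymour's $6$-flow argument'' without supplying the construction, so as written the $\mathbb{Z}_6$ half is not a proof.
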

	
	Now we use Theorem \ref{thm:Z_6Z_4}  to derive results concerning circular flow in signed graphs.

	\begin{theorem}\label{thm:3-edgeconnected6-flow}
		Every $3$-edge-connected signed graph admits a circular $6$-flow, and every $4$-edge-connected signed graph admits a circular $4$-flow.
	\end{theorem}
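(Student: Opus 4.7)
The plan is to deduce this directly from the group connectivity results of Jaeger, Linial, Payan, and Tarsi (Theorem \ref{thm:Z_6Z_4} and Proposition \ref{pro:Z_kconnectivity}), combined with the equivalence between circular $r$-flows and modulo $(p,q)$-flows established in Theorem \ref{thm:Equiavlences}. The key observation is that for $q=1$, a modulo $(p,1)$-flow in a signed graph $(G,\sigma)$ is nothing other than a modulo $p$-flow in $G$ that, at each edge, avoids exactly one prescribed value in $\mathbb{Z}_p$ (namely $0$ at positive edges and $\tfrac{p}{2}$ at negative edges). This is precisely the kind of constraint that $\mathbb{Z}_p$-connectivity is designed to handle.

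For the first statement, suppose $(G,\sigma)$ is $3$-edge-connected. By Theorem \ref{thm:Z_6Z_4}, $G$ is $\mathbb{Z}_6$-connected. Define $g : E(G) \to \mathbb{Z}_6$ by setting $g(e)=0$ if $e$ is a positive edge of $(G,\sigma)$ and $g(e)=3$ if $e$ is a negative edge. Apply Proposition \ref{pro:Z_kconnectivity} with this $g$ to obtain an orientation $D$ and a modulo $6$-flow $f : E(G) \to \mathbb{Z}_6$ with $f(e)\not\equiv g(e)\pmod 6$ for every edge $e$. Then $f(e) \in \{1,2,3,4,5\}$ on every positive edge and $f(e) \in \{0,1,2,4,5\}$ on every negative edge, which is exactly the range required by Definition \ref{def:p/q mod-flow} with $p=6$, $q=1$. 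Hence $(D,f)$ is a modulo $(6,1)$-flow in $(G,\sigma)$, and Theorem \ref{thm:Equiavlences} converts it into a circular $6$-flow.

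For the second statement, assume $(G,\sigma)$ is $4$-edge-connected; by Theorem \ref{thm:Z_6Z_4}, $G$ is $\mathbb{Z}_4$-connected. Now define $g : E(G) \to \mathbb{Z}_4$ by $g(e)=0$ on positive edges and $g(e)=2$ on negative edges. Proposition \ref{pro:Z_kconnectivity} yields an orientation $D$ and a modulo $4$-flow $f$ with $f(e) \neq g(e)$ on every edge, so $f(e) \in \{1,2,3\}$ on positive edges and $f(e) \in \{0,1,3\}$ on negative edges. This matches the definition of a modulo $(4,1)$-flow, and Theorem \ref{thm:Equiavlences} promotes it to a circular $4$-flow.

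There is essentially no obstacle here beyond verifying the arithmetic that the single forbidden value per edge coincides with the value excluded in the modulo $(p,1)$-flow definition, so the argument is a short application of prior machinery. The substance of the theorem lives entirely in Theorem \ref{thm:Z_6Z_4}; the contribution is recognizing that $\mathbb{Z}_p$-connectivity is precisely the right tool to forbid one value per edge and that the sign pattern on edges translates into different forbidden values at positive and negative edges.
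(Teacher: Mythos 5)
Your proof is correct and follows essentially the same route as the paper: both use Theorem~\ref{thm:Z_6Z_4} together with Proposition~\ref{pro:Z_kconnectivity} applied to the same forbidding function $g$ ($0$ on positive edges, $p/2$ on negative edges), and then observe that the resulting modulo flow is a circular modulo $p$-flow. Your extra step of routing through the modulo $(p,1)$-flow definition and Theorem~\ref{thm:Equiavlences} is just a slightly more explicit bookkeeping of what the paper states directly.
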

	\begin{proof}
		Assume $(G, \sigma)$ is a $3$-edge-connected signed graph. 
		Let $g: E(G)\to \mathbb{Z}_6$ to be defined as 
		$$g(e) = \begin{cases} 0, &\text{ if $e$ is a positive edge,} \cr 3, &\text{  if $e$ is a negative edge.} \cr \end{cases}$$ 
		By 	Proposition \ref{pro:Z_kconnectivity} and Theorem \ref{thm:Z_6Z_4}, $G$ admits a modulo $6$-flow $(D,f)$ such that $f(e)\not\equiv g(e)~\pmod 6$ for each edge $e$.  Then $(D, f)$ is a circular modulo $6$-flow in $(G, \sigma)$.
		
		The other half of the theorem is proved in the same way.  
	\end{proof}
	
	\begin{theorem}\label{thm:3spanningtree<4flow}
		For any signed graph $(G, \sigma)$ that contains $3$ edge-disjoint spanning trees, we have $\Phi_c(G, \sigma)<4$. In particular, for every $6$-edge-connected signed graph $(G, \sigma)$, we have $\Phi_c(G, \sigma)<4$.
	\end{theorem}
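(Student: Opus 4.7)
Plan:

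The plan is to construct a circular $r$-flow with $r<4$ for $(G,\sigma)$; by Theorem~\ref{thm:Equiavlences}, it suffices to exhibit a $(p,q)$-flow with $p/q<4$, and I will target the $(8,3)$-flow (so $r=8/3$): an orientation $D$ and $f:E(G)\to\mathbb{Z}_8$ with $\partial_D f\equiv 0\pmod 8$, satisfying $f(e)\in\{3,4,5\}$ on positive edges and $f(e)\in\{0,1,7\}$ on negative edges.

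I would split this into a coarse and fine problem via the decomposition $f=\tilde f+4\epsilon$, where $\tilde f\in\{0,1,3\}\subset\mathbb{Z}_4$ records $f\pmod 4$ and $\epsilon\in\{0,1\}$ is forced by the sign $\sigma(e)$ and $\tilde f(e)$: on positive edges $\epsilon(e)=1$ iff $\tilde f(e)\in\{0,1\}$, and on negative edges $\epsilon(e)=1$ iff $\tilde f(e)=3$. The flow condition $\partial_D f\equiv 0\pmod 8$ then splits into two conditions: (i) $\tilde f$ is a modulo $4$-flow on $G$ with $\tilde f(e)\neq 2$ on every edge, and (ii) $\partial_D\epsilon(v)\equiv \tfrac14\partial_D\tilde f(v)\pmod 2$ at every vertex. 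Any two of the three edge-disjoint spanning trees of $G$ make it $\mathbb{Z}_4$-connected by Theorem~\ref{thm:Z_6Z_4}, so Proposition~\ref{pro:Z_kconnectivity}, applied with the constant obstruction $g\equiv 2$ and zero boundary, furnishes a $\tilde f$ satisfying (i).

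The third spanning tree $T_3$ then provides the additional flexibility needed to arrange (ii). The plan is to modify $\tilde f$ by adjustments $\alpha$ that are modulo $4$-flows supported on fundamental cycles of $T_3$, chosen to toggle the parities entering condition (ii) at prescribed pairs of vertices. The main obstacle is preserving (i) under these modifications: adding $\pm 1$ or $\pm 2 \pmod 4$ along a cycle will, in general, create the forbidden value $\tilde f(e)=2$ on some edge. To circumvent this, the natural idea is to restrict the adjustments to values in $\{0,2\}\pmod 4$ with supports inside the subgraph $H=\{e:\tilde f(e)\in\{1,3\}\}$, which reduces the adjustment problem to finding suitable $\mathbb{Z}_2$-flows in $H$; ensuring that $H$ has enough connectivity to support all the $\mathbb{Z}_2$-flows required to enforce (ii)---possibly by re-selecting $\tilde f$ with this in mind, using the freedom provided by the third spanning tree---is the heart of the argument.
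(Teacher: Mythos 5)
Your starting point---using the $\mathbb{Z}_4$-connectivity of $T_1\cup T_2$ (Theorem~\ref{thm:Z_6Z_4} together with Proposition~\ref{pro:Z_kconnectivity}) to produce a modulo $4$-flow avoiding the value $2$, while reserving $T_3$ for extra room---is exactly right, but the target you set, a circular $\frac{8}{3}$-flow, is false under the hypothesis, so no completion of the plan is possible. Take $G$ to be two disjoint copies of $2K_4$ joined by three edges between the copies: a routine check of the Nash-Williams--Tutte condition shows $G$ still has three edge-disjoint spanning trees, yet $G$ has a $3$-edge-cut. For the all-positive signature, a circular $r$-flow assigns values with $|f(e)|\in[1,r-1]$ to the three edges of that cut and these must sum to zero, forcing $r\geq 3>\frac{8}{3}$. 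Hence $\Phi_c(G,+)\geq 3$, and the obstacle you flag at the end (that $H$ may fail to support the required $\mathbb{Z}_2$-flows) is not a technical difficulty but a genuine obstruction: no choice of $\tilde f$ satisfies your conditions (i) and (ii) simultaneously for this graph. This is consistent with the quantitative picture elsewhere in the paper, where a bound as strong as $\frac{2k}{k-1}=\frac{8}{3}$ (i.e.\ $k=4$) is only reached under $(3k-1)=11$-edge-connectivity in Theorem~\ref{thm:mainFlow}.

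What the theorem actually requires is only strictness below $4$, and the paper obtains it by a different use of $T_3$. After inversing so that all negative edges lie in $T_1$ (Lemma~\ref{lem:SpanningTree}), set $g(e)=0$ on positive edges of $T_1\cup T_2$ and $g(e)=2$ elsewhere; by $\mathbb{Z}_4$-connectivity of $T_1\cup T_2$ there is $f^*$ nowhere $0$ on $T_1\cup T_2$ with $\partial_{\scriptscriptstyle D}f^*\equiv\partial_{\scriptscriptstyle D}g\pmod 4$, and $f=f^*-g$ is then a circular modulo $4$-flow in which every edge of $T_3$ carries the value $2$. Since the value $2$ is never tight and every cut meets $T_3$, there is no tight cut, and Lemma~\ref{lem:tightcut} gives $\Phi_c(G,\sigma)<4$. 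If you wish to keep a two-layer decomposition of the kind you propose, the inner radius must be allowed to degrade with the graph (as in part (2) of Theorem~\ref{thm:mainFlow}, where one aims at $\frac{2ks-2}{(k-1)s}$ for a large $s$ depending on $G$) rather than being fixed at $\frac{8}{3}$.
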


	\begin{proof}
		Let $T_1, T_2$ and $T_3$ be three edge-disjoint spanning trees of the underlying graph $G$.  By Lemma~\ref{lem:SpanningTree}, we consider the inversing equivalent signed graph $(G, \sigma')$ where all the negative edges are in $T_1$. By Lemma \ref{lem:tightcut}, it suffices to construct a circular $4$-flow $(D, f)$ of $(G, \sigma')$ that has no tight cut.
		
		Let $D$ be an orientation on $G$ and let $g: E(G)\to \mathbb{Z}_4$ be defined as
		$$g(e) = \begin{cases} 0, &\text{ for each positive edge $e\in E(T_1)\cup E(T_2)$,} \cr 2, &\text{ otherwise.} \cr \end{cases}$$ 
		Let $\beta^*: V(G)\to \mathbb{Z}_4$ be the map satisfying that 
		$\beta^*(v)\equiv \partial_{\scriptscriptstyle D}g(v)~\pmod 4$. Then  $\beta^*$ is a $\mathbb{Z}_4$-boundary. Let $H=T_1 \cup T_2$. By Theorem~\ref{thm:Z_6Z_4}, $H$ is $\mathbb{Z}_4$-connected, and hence there exists a mapping $f^*: E(H) \to \mathbb{Z}_4\setminus\{0\}$ such that $\partial_{\scriptscriptstyle D}f^*(v)\equiv \beta^*(v)~\pmod 4$ for each vertex $v\in V(H)$.   Extend   $f^*$ to the whole graph $G$ by letting $f^*(e)=0$ for   $e\in E(G\setminus H)$.
		Let $f=f^*-g$. Then $\partial_{\scriptscriptstyle D}f(v)\equiv 0~\pmod 4$ for every vertex $v\in V(G)$ and  $(D, f)$ is a circular modulo $4$-flow in $(G, \sigma')$, since $f(e)\neq 2$ for each negative edge $e\in T_1$ and $f(e)\neq 0$ for each positive edge $e$.  As $f(e) =2$  for each edge $e \in T_3$ and any cut contains an edge in $T_3$, $(G, \sigma')$ contains no tight cut with respect to $(D,f)$.
		
		By Nash-Williams-Tutte Theorem~\cite{N61,T61}, $2k$-edge-connected graphs have $k$ edge-disjoint spanning trees. Thus  $\Phi_c(G, \sigma)<4$ for every $6$-edge-connected signed graph $(G, \sigma)$.
	\end{proof}

	It was proved in \cite{LTWZ13} that $6$-edge-connected graph is $\mathbb{Z}_3$-connected. However, the argument in the proof of    Theorem~\ref{thm:3-edgeconnected6-flow} cannot be applied to show that every $6$-edge-connected  signed graph admits a circular $3$-flow.
	It is because our definition of circular (modulo) $3$-flow in signed graphs is quite different from the notion of $\mathbb{Z}_3$-connected graphs. 
	
	Before moving to the higher edge-connectivity, we need the following definitions and results concerning orientations with boundaries from \cite{LWZ20}.
	
	\begin{definition}
		(1)	Given a graph $G$, a {\em parity-compliant $2k$-boundary} is a mapping $\beta:  V(G) \to \{0, \pm 1, \ldots, \pm k\}$ satisfying two conditions:
		
		\begin{itemize}
			\item $\sum\limits_{v\in V(G)} \beta(v)\equiv 0~\pmod {2k}$, and
			\item for every vertex $v\in V(G)$, $ \beta(v)\equiv d(v)~\pmod {2}.$
		\end{itemize}
		
		(2)	Given a parity-compliant $2k$-boundary $\beta$, an orientation $D$ on $G$ is called a {\em  $(\mathbb{Z}_{2k},\beta)$-orientation} if for every vertex $v \in V (G)$, $$\overleftarrow{d_{ \scriptscriptstyle D}}(v)-\overrightarrow{d_{\scriptscriptstyle D}}(v) \equiv \beta(v) ~\pmod {2k}.$$
	\end{definition}
	
	Given a {\em parity-compliant $2k$-boundary}, for a subset $A\subset V(G)$, we define $\beta(A)\in \{0, \pm 1, \ldots, \pm k\}$ by $\beta(A)\equiv \sum\limits_{v\in A}\beta(v)~\pmod {2k}.$ 
	
	\begin{definition}\label{def:D_e}
		Given a (partial) orientation $D$ of a graph $G$ and an arc $e=(x, y)$ in $D$, let $D_e$ be the (partial) orientation obtained from $D$ by flipping the arc $e$, that is to remove $(x, y)$ and to add $(y, x)$. Similarly, given a parity-compliant $2k$-boundary $\beta$ and already oriented edge $(x, y)$ we define $\beta_{e}$ as follows: 
		$$\beta_{e}(x)=\beta(x) - 2, ~~\beta_{e}(y)=\beta(y) + 2, ~~\text{ and for every other vertex } v, ~ \beta_e(v)=\beta(v).$$ 
	\end{definition}
	
	It is easy to see that $\beta_{e}$ is also a parity-compliant $2k$-boundary of $G$. Here the calculations are done in $\mathbb{Z}_{2k}$ whose presentation of elements will be understood from the context. Normally, as in the previous definition, we will present them by $\beta:  V(G) \to \{0, \pm 1, \ldots, \pm k\}$ where $\pm k$ present the same element but it is more suitable to allow both presentation. With these definitions, we have the following immediate observation.
	
	\begin{observation}\label{obs:D,B-->D_e,B_e}
		Given a signed graph $(G, \sigma)$ and a parity-compliant $2k$-boundary function $\beta$, an orientation $D$ on $G$ is a $(\mathbb{Z}_{2k},\beta)$-orientation if and only if $D_e$ is a $(\mathbb{Z}_{2k},\beta_e)$-orientation.		
	\end{observation}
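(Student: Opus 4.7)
The statement is essentially a bookkeeping observation, so the plan is a direct local calculation at the two endpoints of the flipped arc, together with an involution argument for the converse direction.

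Write $e=(x,y)$, so that $e$ is counted in $\overleftarrow{E_{\scriptscriptstyle D}}(x)$ (out at $x$) and in $\overrightarrow{E_{\scriptscriptstyle D}}(y)$ (in at $y$). At every vertex $v\notin\{x,y\}$, the orientation of the edges incident to $v$ is unchanged between $D$ and $D_e$, and by definition $\beta_e(v)=\beta(v)$; so the congruence defining a $(\mathbb{Z}_{2k},\beta)$-orientation at such $v$ transfers automatically. At $x$, flipping $e$ converts one out-arc into one in-arc, so $\overleftarrow{d_{\scriptscriptstyle D_e}}(x)-\overrightarrow{d_{\scriptscriptstyle D_e}}(x)=\overleftarrow{d_{\scriptscriptstyle D}}(x)-\overrightarrow{d_{\scriptscriptstyle D}}(x)-2$; since $\beta_e(x)=\beta(x)-2$, the congruence at $x$ modulo $2k$ for $(D_e,\beta_e)$ is equivalent to the one for $(D,\beta)$. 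A symmetric computation at $y$ gives an increase of $2$ in $\overleftarrow{d_{\scriptscriptstyle D_e}}(y)-\overrightarrow{d_{\scriptscriptstyle D_e}}(y)$ matching $\beta_e(y)=\beta(y)+2$, and the two implications together give the forward direction.

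For the converse, I would observe that flipping the arc of $D_e$ corresponding to $e$ (now oriented from $y$ to $x$) returns $D$, and the boundary modification is likewise involutive: applying the same operation to $\beta_e$ with respect to this flipped arc recovers $\beta$. Hence the forward argument applied to $(D_e,\beta_e)$ immediately supplies the reverse implication. The only routine item worth checking en passant is that $\beta_e$ itself is still a parity-compliant $2k$-boundary, but $\sum_{v\in V(G)}\beta_e(v)=\sum_{v\in V(G)}\beta(v)$ since the $\pm 2$ cancel, the parities at $x$ and $y$ are preserved modulo $2$, and no vertex degree in $G$ has changed. There is no genuine obstacle in this argument; the reason to record it as an observation is precisely to license the reuse of this flip-invariance in subsequent proofs that reverse arcs one at a time to adjust a target boundary.
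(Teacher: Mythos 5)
Your proof is correct and is exactly the direct verification the paper has in mind: the paper states this as an immediate observation without proof, and your local computation at $x$ and $y$ (the $\mp 2$ shifts in $\overleftarrow{d}(v)-\overrightarrow{d}(v)$ matching the $\mp 2$ shifts in $\beta_e$), together with the involutive nature of the flip, is the intended justification. The side remark that $\beta_e$ remains a parity-compliant $2k$-boundary matches the sentence the paper places just before the observation.
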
 
	
	The following theorem of \cite{LTWZ13,LWZ20} is one of the key elements of our proofs. It basically claims that subject to some connectivity condition, some partial orientations satisfying basic necessary conditions can be extended to a  $(\mathbb{Z}_{2k},\beta)$-orientation of the full graph.

	\begin{theorem}\label{evenpartialextending-old} {\rm \cite{LTWZ13,LWZ20}}
		Let $G$ be a graph and let $\beta$ be a parity-compliant $2k$-boundary of $G$, where $k\ge 3$. Let $z$ be a vertex of $V(G)$ such that $d(z)\le 2k-2 + |\beta(z)|$.  Assume that $D_{z}$ is an orientation on $E(z)$ (edges incident to $z$) which achieves boundary $\beta(z)$ at $z$. Let $V_0=\{v\in V(G)\setminus \{z\} \mid \beta(v)=0\}$. If $V_0\neq \emptyset$, then let $v_0$ be a vertex of $V_0$ with the smallest degree. Assume that $d(A)\ge 2k-2 + |\beta(A)|$ for any $A\subset V(G)\setminus\{z\}$ with $A\neq \{v_0\}$ and $|V(G)\setminus A|>1$. Then the partial orientation $D_{z}$ can be extended to a $(\mathbb{Z}_{2k},\beta)$-orientation on the entire graph $G$.
	\end{theorem}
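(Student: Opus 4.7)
The plan is to prove the theorem by induction on $|V(G)|$, using a splitting-off technique at the low-degree vertex $z$. The bound $d(z) \le 2k - 2 + |\beta(z)|$ is exactly the threshold that makes $z$ ``splittable'', so the strategy is to iteratively pair up the arcs of $D_z$ at $z$, replace each pair by a single arc between its other endpoints, and then delete $z$ to obtain a smaller instance on which the induction hypothesis applies.

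First I would pair up the arcs of $D_z$ into $\min(\overleftarrow{d}(z), \overrightarrow{d}(z))$ pairs of the form $\{(x,z),(z,y)\}$, one arc into $z$ and one arc out of $z$; the remaining $|\beta(z)|$ arcs all point in the same direction and are exactly those realizing the boundary $\beta(z)$. Splitting off such a pair means deleting the two arcs and inserting a single arc $(x,y)$; this preserves the $\mathbb{Z}_{2k}$-boundary at every vertex other than $z$ and does not alter $\beta$ on $V(G) \setminus \{z\}$. Once all pairs have been split off, $z$ retains only its $|\beta(z)|$ leftover arcs; removing $z$ together with those arcs yields a graph $G'$ on $V(G) \setminus \{z\}$ whose parity-compliant boundary $\beta'$ is obtained from $\beta$ by restriction and by suitable correction at the endpoints of the removed arcs. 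The inductive hypothesis applied to $G'$ then delivers a $(\mathbb{Z}_{2k}, \beta')$-orientation of $G'$, which lifts back to $G$ by orienting each pair $(x,z),(z,y)$ consistently with the direction of the split arc $(x,y)$ in $G'$, combined with the prescribed arcs at $z$; this produces the required extension of $D_z$.

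The main obstacle, and the technical core of the theorem, is to select the sequence of splittings so that the hypothesis $d(A) \ge 2k - 2 + |\beta(A)|$ continues to hold throughout the process for every admissible subset $A \subset V(G) \setminus \{z\}$. After splitting off $(x,z),(z,y)$, the only sets whose effective cut drops by $2$ are those containing both $x$ and $y$, so I would need a Mader-type lemma asserting that among all candidate pairs at $z$ there exists at least one whose splitting preserves the bound $d(A) \ge 2k - 2 + |\beta(A)|$ for every $A$ except possibly $\{v_0\}$. Assuming the contrary, every candidate pairing would create some \emph{dangerous} set, and submodularity of the cut function combined with the parity-compliant structure of $\beta$ would force two such dangerous sets whose overlap or complement configuration contradicts either the slack inequality at some non-$v_0$ vertex or the global condition $\sum_v \beta(v) \equiv 0 \pmod{2k}$. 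The hypothesis $k \ge 3$ enters precisely at this submodular counting step (where one needs enough room in the bound $2k-2+|\beta(\cdot)|$), and the role of $v_0$ as the unique exempt vertex of $V_0$ is exactly to absorb the single set where the connectivity inequality could unavoidably fail. The base case, when $|V(G) \setminus \{z\}|$ is small, is handled by direct verification from the boundary equations.
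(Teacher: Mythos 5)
You should first note that the paper itself offers no proof of this statement: it is imported verbatim from \cite{LTWZ13,LWZ20}, so your attempt has to be measured against the arguments in those papers. Your outline does match their strategy in skeleton form — induction on the order of the graph, splitting off pairs of arcs at the special vertex $z$, deleting $z$, and recursing — but as written it contains a genuine gap: the entire technical content of the theorem is concentrated in the step you yourself label ``the main obstacle'' and then do not carry out. Asserting that a Mader-type lemma ``would'' produce an admissible splitting, and that otherwise ``submodularity \dots would force two such dangerous sets whose overlap or complement configuration contradicts'' the hypotheses, is a placeholder for the proof rather than a proof. The threshold $d(A)\ge 2k-2+|\beta(A)|$ is not a pure connectivity condition: while $d$ is submodular, the representatives $\beta(A)\in\{0,\pm1,\dots,\pm k\}$ are only defined modulo $2k$, so $|\beta(A\cup B)|+|\beta(A\cap B)|$ need not relate to $|\beta(A)|+|\beta(B)|$ in any way that makes the uncrossing of dangerous sets automatic. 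One must define dangerous sets precisely, show that the maximal ones are few and nested or disjoint enough to be counted against $d(z)\le 2k-2+|\beta(z)|$, and derive the contradiction from that count; this is where $k\ge 3$ and the exemption of $\{v_0\}$ are actually consumed, and none of it is routine.

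Two further points are unaddressed. First, after all pairs at $z$ are split off and $z$ is deleted, the induction hypothesis can only be applied to the reduced graph if you exhibit a \emph{new} special vertex $z'$ with $d(z')\le 2k-2+|\beta'(z')|$ and a pre-orientation at it achieving its boundary; your sketch never produces one. This is precisely the role of $v_0$ in the statement: the minimum-degree vertex with $\beta(v_0)=0$ is the designated next special vertex, which is why the singleton $\{v_0\}$ must be exempted from the cut condition — not merely, as you put it, ``to absorb the single set where the connectivity inequality could unavoidably fail.'' Second, your accounting of the unpaired arcs is slightly off: $D_z$ achieves $\beta(z)$ only modulo $2k$, so the number of leftover same-direction arcs is $|\overleftarrow{d_{\scriptscriptstyle D_z}}(z)-\overrightarrow{d_{\scriptscriptstyle D_z}}(z)|$, which may equal $|\beta(z)\pm 2k|$ rather than $|\beta(z)|$, and the boundary corrections at the far endpoints of the deleted arcs must be handled accordingly (as must loops created when a split-off pair has $x=y$). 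In short, the approach is the right one, but the load-bearing lemma is missing.
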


	\begin{theorem}\label{THM:LWZ20}{\em \cite{LWZ20}}
		Let $G$ be a $(3k-3)$-edge-connected graph, where $k\ge 3$. Then for any parity-compliant $2k$-boundary $\beta$ of $G$, $G$ admits a $(\mathbb{Z}_{2k},\beta)$-orientation.
	\end{theorem}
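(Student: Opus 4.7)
The plan is to proceed by induction on $|V(G)|+|E(G)|$, with the aim of applying the partial-extension result of Theorem~\ref{evenpartialextending-old} at a carefully chosen vertex $z$. Fix $k \ge 3$ and take a $(3k-3)$-edge-connected graph $G$ with parity-compliant $2k$-boundary $\beta$. The goal is to select $z \in V(G)$ and a partial orientation $D_z$ on the edges incident to $z$ that achieves boundary $\beta(z)$ at $z$, and then to extend $D_z$ to a $(\mathbb{Z}_{2k},\beta)$-orientation of $G$ via Theorem~\ref{evenpartialextending-old}.

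The first thing to verify is that the subset-degree hypothesis $d(A)\ge 2k-2+|\beta(A)|$ in Theorem~\ref{evenpartialextending-old} is a consequence of $(3k-3)$-edge-connectivity plus parity compliance. For any $A\subset V(G)\setminus\{z\}$, edge-connectivity yields $d(A)\ge 3k-3$. The bound $d(A)\ge 2k-2+|\beta(A)|$ is immediate as long as $|\beta(A)|\le k-1$. Only the extreme case $|\beta(A)|=k$ can potentially fail, but then $\beta(A)\equiv k\pmod 2$, whereas $\beta(A)\equiv d(A)\pmod 2$ forces $\beta(A)\equiv 3k-3\equiv k-1\pmod 2$, a parity contradiction. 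So the hypothesis holds automatically, and similarly one checks that the degree of the exceptional vertex $v_0$ (where $\beta(v_0)=0$) is at least $3k-3\ge 2k-2$, so no issue arises there either.

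The main obstacle is producing a vertex $z$ with $d(z)\le 2k-2+|\beta(z)|$, since the minimum-degree bound from edge-connectivity only gives $d(z)\ge 3k-3$. The remedy is a splitting-off argument: pick any $z$, and repeatedly apply a Mader-type admissible-pair theorem to split off pairs of edges $\{uz,zw\}$ at $z$ into a single edge $uw$. Each such operation decreases $d(z)$ by $2$ (preserving its parity, hence parity compliance with $\beta(z)$), does not alter $d(v)$ or $\beta(v)$ for $v\ne z$, and can be carried out while preserving all pairwise edge-connectivities inside $V(G)\setminus\{z\}$ at the required level $3k-3$. Splitting is continued until $d(z)\in\{|\beta(z)|,|\beta(z)|+2,\ldots,2k-2+|\beta(z)|\}$, at which point $D_z$ can be chosen by directing the appropriate number of edges out of $z$ and the rest into $z$. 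Theorem~\ref{evenpartialextending-old} then extends $D_z$ to a full $(\mathbb{Z}_{2k},\beta)$-orientation on the split graph $G'$, and finally each split edge $uw$ is reintroduced as the pair $uz,zw$ with oppositely consistent orientations (so that their contributions to the boundary at $z$ cancel), recovering a $(\mathbb{Z}_{2k},\beta)$-orientation on $G$.

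The hard part will be establishing that the requisite admissible splittings always exist at $z$. Standard Mader-type splitting-off theorems guarantee this provided the remaining edges at $z$ do not become saturated against some small cut; verifying admissibility requires analyzing "critical cuts" through $V(G)\setminus\{z\}$ and showing that the edge-connectivity lower bound $3k-3$, combined with parity-compliance and the exceptional status of $v_0$, is exactly enough to admit a valid splitting at each step. In the boundary case $d(A)=3k-3$ with $|\beta(A)|=k-1$, one must be especially careful: the cut through $A$ is tight for the hypothesis of Theorem~\ref{evenpartialextending-old}, so splittings must be chosen so as not to further reduce $d(A)$, which is precisely where the admissible-pair analysis becomes delicate.
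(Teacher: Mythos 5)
This theorem is quoted from \cite{LWZ20}; the present paper contains no proof of it, so there is no in-paper argument to compare yours against. Judged on its own terms, your proposal takes the natural route of deriving the global statement from the extension result Theorem~\ref{evenpartialextending-old}, and your verification of the cut hypothesis is essentially right: for $A\subset V(G)\setminus\{z\}$ one has $d(A)\ge 3k-3\ge 2k-2+|\beta(A)|$ whenever $|\beta(A)|\le k-1$, and the only remaining case, $|\beta(A)|=k$ with $d(A)=3k-3$, is excluded by the parity relation $\beta(A)\equiv d(A)\pmod 2$, so that $d(A)\ge 3k-2$ there. (A small point: the hypothesis of Theorem~\ref{evenpartialextending-old} is simply not required for $A=\{v_0\}$, so there is nothing to check about $v_0$.)

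The gap is the one you flag yourself: the whole reduction hinges on splitting off at $z$ down to $d(z)\le 2k-2+|\beta(z)|$ while keeping the cut hypothesis, and you leave the existence of admissible splittings as ``the hard part.'' As written the argument is therefore incomplete. The step can be closed: Mader's splitting-off theorem applies whenever $z$ is incident to no cut edge and $d(z)\ne 3$ (both automatic here, since each split is performed while $d(z)\ge 2k+|\beta(z)|\ge 6$ and the other vertices remain pairwise $(3k-3)$-edge-connected), and it preserves the local edge-connectivity $\lambda(x,y)$ for all $x,y\ne z$. Hence after any sequence of splits one still has $d(A)\ge 3k-3$ for every $A\subset V\setminus\{z\}$ with $|V\setminus A|>1$, and the same parity argument restores $d(A)\ge 2k-2+|\beta(A)|$; your worry about the tight cuts with $d(A)=3k-3$ and $|\beta(A)|=k-1$ is misplaced, as nothing special needs to be done for them. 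That said, the splitting machinery is unnecessary. A much simpler reduction (and essentially the one used in \cite{LTWZ13,LWZ20}) is to attach a new vertex $z_0$ to an arbitrary vertex $v$ of $G$ by $2k-2$ parallel edges, set $\beta(z_0)=0$, and pre-orient $k-1$ of these edges in each direction. Then $d(z_0)=2k-2+|\beta(z_0)|$, the cut condition in the enlarged graph follows from the $(3k-3)$-edge-connectivity of $G$ by exactly your parity computation, Theorem~\ref{evenpartialextending-old} applies directly, and deleting $z_0$ afterwards changes no boundary since its edges contribute $0$ at $v$.
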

	
	Recall that $d_{\scriptscriptstyle \hat{G}}^+(v)$ (or simply $d^+(v)$ when the graph is clear from the context) denotes the number of positive edges incident to $v$ in the signed graph $\hat{G}$.

	\begin{theorem}\label{thm:orientation}
		Given positive integers $p$ and $q$ satisfying that $p\geq q$, a signed graph $\hat{G}$ admits a $(2p,q)$-flow if and only if the graph $(2p-2q){G}$ admits a $(\mathbb{Z}_{4p}, \beta)$-orientation with $\beta(v)\equiv 2p\cdot d^+_{\scriptscriptstyle \hat{G}}(v)~\pmod{4p}$ for each vertex $v\in V(G)$. 
	\end{theorem}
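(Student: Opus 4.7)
The plan is to convert between a modulo $(2p,q)$-flow in $\hat{G}$ and a $(\mathbb{Z}_{4p}, \beta)$-orientation of $(2p-2q)G$ via a doubling-and-shifting substitution. By Theorem~\ref{thm:Equiavlences}, it suffices to work with a modulo $(2p,q)$-flow $(D, f)$ in place of a $(2p,q)$-flow. The key substitution is to define an auxiliary function $\tilde{f} : E(G) \to \mathbb{Z}$ taking even values in $[-(2p-2q), 2p-2q]$ as follows: set $\tilde{f}(e) = 2f(e) - 2p$ for each positive edge $e$, and choose $\tilde{f}(e)$ to be the unique representative in $[-(2p-2q), 2p-2q]$ of $2f(e) \pmod{4p}$ for each negative edge $e$. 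A routine check shows that in both cases this gives a bijection between the allowed flow values and the even integers of the target interval.

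For the forward direction, write $\tilde{f}(e) = 2n_e - (2p - 2q)$ with $n_e \in \{0, 1, \ldots, 2p-2q\}$, and orient $n_e$ of the $2p - 2q$ parallel copies of $e$ in $(2p-2q)G$ consistently with $D$ while orienting the remaining $2p - 2q - n_e$ copies in the opposite direction. A direct computation shows that the resulting orientation $D'$ satisfies $\overleftarrow{d_{D'}}(v) - \overrightarrow{d_{D'}}(v) = \partial_D \tilde{f}(v)$ as integers. Using $\partial_D f(v) \equiv 0 \pmod{2p}$ together with the parity identity $d^+(v) \equiv \overleftarrow{d^+}(v) - \overrightarrow{d^+}(v) \pmod{2}$, one obtains $\partial_D \tilde{f}(v) \equiv 2p\cdot d^+(v) \pmod{4p}$, which is exactly $\beta(v)$. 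The parity-compliance of $\beta$ and the total-sum condition $\sum_v \beta(v) \equiv 0 \pmod{4p}$ reduce, respectively, to the fact that $(2p-2q) d_G(v)$ is even and to $\sum_v d^+(v) = 2|E^+_{\hat{G}}|$.

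The reverse direction is essentially the inverse construction. Given a $(\mathbb{Z}_{4p}, \beta)$-orientation $D'$ of $(2p-2q)G$ and an arbitrary reference orientation $D$ of $G$, let $n_e$ denote the number of the $2p-2q$ copies of $e$ oriented consistently with $D$ in $D'$, set $\tilde{f}(e) = 2n_e - (2p-2q)$, and recover $f(e)$ by inverting the bijection above. The boundary condition for $D'$ translates, via the same arithmetic, into $2\partial_D f(v) \equiv 0 \pmod{4p}$, hence $\partial_D f(v) \equiv 0 \pmod{2p}$. This yields a modulo $(2p,q)$-flow, and therefore a $(2p,q)$-flow by Theorem~\ref{thm:Equiavlences}.

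The main difficulty is conceptual rather than computational: identifying the correct substitution, with the shift by $2p$ applied only to positive edges, so that the boundary value at each vertex of $(2p-2q)G$ picks up exactly the term $2p\cdot d^+(v)$ modulo $4p$. Once this substitution is in place, every verification reduces to routine modular bookkeeping.
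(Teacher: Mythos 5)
Your proof is correct and is essentially the paper's argument: your $\tilde f$ is exactly the paper's signed count $f_{_{I}}$ of parallel-arc orientations, and your shift by $2p$ on positive edges is the paper's auxiliary function $g$, the only cosmetic difference being that you work with a modulo $(2p,q)$-flow and double, while the paper works directly with the modulo $(4p,2q)$-flow $f_{_{I}}+g$. Both rely on the same parity identity $\overleftarrow{d^{+}}(v)-\overrightarrow{d^{+}}(v)\equiv d^{+}(v)\pmod 2$ and on Theorem~\ref{thm:Equiavlences} to pass between modulo flows and flows.
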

	
	\begin{proof}
		Assume that $D$ is a $(\mathbb{Z}_{4p}, \beta)$-orientation on $(2p-2q){G}$ where $\beta(v)\equiv 2p\cdot d^+_{\scriptscriptstyle \hat{G}}(v)~\pmod{4p}$ for each vertex $v\in V(G)$. This in particular means that $\beta$ is a parity-compliant $4p$-boundary. Observe that if we define $h(e)=1$ for each edge $e$ of $(2p-2q){G}$, then we have $\partial_{\scriptscriptstyle D} h(v)\equiv \beta(v) ~\pmod {4p}$.  	
		
		Let $D'$ be an orientation on $G$. For each $e\in E(G)$, let $[e]$ denote the set of  the corresponding  $2p-2q$ parallel edges in $(2p-2q){G}$. Let $I_e: [e]\to \{\pm 1\}$ be defined as follows: $I_e(e')=1$ if $e'\in [e]$ in $D$ is oriented as the same as $e$ in $D'$ and $I_e(e')=-1$ otherwise. Then we define a mapping $f_{_{I}}: E(G) \to \mathbb{Z}_{4p}$ as follows: $$f_{_{I}}(e)=\sum\limits_{e_i \in [e]}I_e(e_i).$$ Note that for each edge $e\in E(G)$, $f_{_{I}}(e)\in \{-(2p-2q), \ldots, -2, 0, 2, \ldots,2p-2q\}$, i.e., $|f_{_{I}}(e)|$ is even and it satisfies that $|f_{_{I}}(e)|\le 2p-2q$, and $(D', f_{_{I}})$ in $G$ satisfies that for each vertex $v\in V(G)$, $\partial_{\scriptscriptstyle D'}f_{_{I}}(v)\equiv \beta(v)~\pmod {4p}$. Next we define another mapping $g: E(G)\to \mathbb{Z}_{4p}$ as follows: $g(e)=0$ if $e$ is a negative edge and $g(e)=2p$ if $e$ is a positive edge. Then, for each $v\in V(G)$, we have $\partial_{\scriptscriptstyle D'} g(v)\equiv 2p\cdot d^+(v)~\pmod{4p}$.
		
		Let $f=f_{_{I}}+g$. Then $f: E(G)\to \mathbb{Z}_{4p}$ satisfies the following conditions: for each positive edge $e$, $f(e)=f_{_{I}}(e)+2p \in \{2q, 2q+2,\ldots, 4p-2q\}$ and for each negative edge $e$, $f(e)=f_{_{I}}(e)\in \{-(2p-2q), \ldots, -2, 0, 2, \ldots,2p-2q\}$. Furthermore, considering the orientation $D'$ on $G$, $$\partial_{\scriptscriptstyle D'} f(v)=\partial_{\scriptscriptstyle D'} f_{_{I}}(v)+\partial_{\scriptscriptstyle D'} g(v)\equiv \beta(v)+2p\cdot d^+(v)\equiv 0~\pmod{4p}.$$ Hence, $(D', f)$ is a modulo $(4p, 2q)$-flow in $\hat{G}$. By Theorem~\ref{thm:Equiavlences}, $\hat{G}$ admits a $(4p, 2q)$-flow and hence a $(2p,q)$-flow.  
		
		By reversing the process above, one can build a $(\mathbb{Z}_{4p}, \beta)$-orientation on the graph $(2p-2q){G}$ with $\beta(v)\equiv 2p\cdot d^+_{\scriptscriptstyle \hat{G}}(v)~\pmod{4p}$ from a given $(2p,q)$-flow. 
	\end{proof}

	We are now ready to give our main results about the upper bounds on the circular flow indices of signed graphs based on the edge connectivity of the underlying graphs. To prove the theorem, rather than study the circular flow in signed graphs directly, we apply Theorem~\ref{thm:orientation} to study an orientation property of $\alpha G$ for some choice of $\alpha$. %Theorems~\ref{evenpartialextending-old} and \ref{THM:LWZ20} are also quite important in our proofs.
	
	\begin{theorem}\label{thm:mainFlow}
		Given a signed graph $(G, \sigma)$ and an integer $k\geq2$, the following claims hold.
		\begin{enumerate}[label=(\arabic*)]
			\item\label{F-1} If $G$ is $(3k-1)$-edge-connected, then $\Phi_c(G, \sigma) \leq \dfrac{2k}{k-1}$.
			\item\label{F0} If $G$ is $3k$-edge-connected, then $\Phi_c(G, \sigma) < \dfrac{2k}{k-1}$.
			\item\label{F+1} If $G$ is $(3k+1)$-edge-connected, then $\Phi_c(G, \sigma) \leq \dfrac{4k+2}{2k-1}$.
			
		\end{enumerate}
	\end{theorem}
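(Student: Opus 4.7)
The plan is to treat all three parts under one roof via Theorem~\ref{thm:orientation}: finding a circular $\frac{2p}{q}$-flow in $(G,\sigma)$ is equivalent to producing a $(\mathbb{Z}_{4p},\beta)$-orientation of $(2p-2q)G$ with $\beta(v)\equiv 2p\,d^+(v)\pmod{4p}$. In every case this $\beta$ takes values in $\{0,2p\}\pmod{4p}$, hence is even (matching the parity of $d_{(2p-2q)G}(v)=(2p-2q)d_G(v)$), and $\sum_v \beta(v) \equiv 2p\cdot 2|E^+(G)| \equiv 0\pmod{4p}$, so $\beta$ is a parity-compliant $4p$-boundary. I then invoke Theorem~\ref{THM:LWZ20}, applied with $2p$ in the role of $k$, which requires $(2p-2q)G$ to be $(6p-3)$-edge-connected.

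For (1), take $p=k$ and $q=k-1$: then $(2p-2q)G=2G$ has edge-connectivity $2(3k-1)=6k-2 \ge 6k-3$, and Theorem~\ref{THM:LWZ20} supplies the desired orientation. For (3), take $p=2k+1$ and $q=2k-1$: then $(2p-2q)G=4G$ has edge-connectivity $4(3k+1)=12k+4 \ge 12k+3 = 6p-3$, and the same scheme applies. The hypothesis $2p\ge 3$ of Theorem~\ref{THM:LWZ20} is trivially satisfied in both cases for $k\ge 2$.

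For (2), the direct argument from (1) yields only the non-strict bound, so I will refine it via Lemma~\ref{lem:tightcut}: it suffices to produce a circular $\frac{2k}{k-1}$-flow with no tight cut. Tracing the correspondence in the proof of Theorem~\ref{thm:orientation} with $p=k$, $q=k-1$, for each $e\in E(G)$ the quantity $f_I(e)=I_e(e_1)+I_e(e_2)$ lies in $\{-2,0,2\}$ and equals $0$ precisely when the two copies of $e$ in $2G$ are oriented oppositely in $D$, taking $\pm 2$ when they agree. Consequently a tight cut in the induced flow corresponds exactly to a cut $(X,X^c)$ of $G$ across which every edge has its two copies oriented identically in $D$. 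Thus the goal reduces to producing a $(\mathbb{Z}_{4k},\beta)$-orientation of $2G$ in which the set $A_D := \{e\in E(G): \text{the two copies of } e \text{ are oriented oppositely in } D\}$ meets every nontrivial cut of $G$.

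To arrange this, I will exploit the three edges of slack in the edge-connectivity of $2G$ (which is $6k$, compared with the threshold $6k-3$). The plan is to apply Theorem~\ref{evenpartialextending-old}: select a vertex $z$ and pre-orient $E_{2G}(z)$ so as to maximize the number of anti-parallel pairs subject to achieving the prescribed boundary $\beta(z)$, then extend the partial orientation to all of $2G$. Since $|\beta(A)|\le 2k$ and $d_{2G}(A)\ge 6k$, the key hypothesis $d(A)\ge 4k-2+|\beta(A)|$ holds with room to spare, so the extension goes through, and the anti-parallel pairs at $z$ kill the star cuts incident to $z$. The main obstacle is to rule out tight cuts not incident to $z$ simultaneously: I anticipate handling this by iterating the construction, vertex-by-vertex or along a suitably chosen covering substructure of $G$, and re-invoking Theorem~\ref{evenpartialextending-old} at each step, using the $6k$-edge-connectivity (and the fact that $|\beta(A)|\le 2k$) to preserve the inequality $d(A)\ge 4k-2+|\beta(A)|$ after each pre-orientation. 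This accounting of cut-sizes against boundary sizes will be the delicate point.
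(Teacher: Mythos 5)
Parts~\ref{F-1} and~\ref{F+1} of your proposal are correct and coincide with the paper's argument: the same reduction via Theorem~\ref{thm:orientation} to a $(\mathbb{Z}_{4p},\beta)$-orientation of $2G$ (resp.\ $4G$), the same parity-compliance check, and the same application of Theorem~\ref{THM:LWZ20} with the counts $6k-2\ge 6k-3$ and $12k+4\ge 12k+3$.

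Part~\ref{F0} has a genuine gap. Your reduction is sound as far as it goes: with $p=k$, $q=k-1$ an edge of $G$ is tight in the induced modulo $(4k,2k-2)$-flow exactly when its two copies in $2G$ receive the same orientation, so it would indeed suffice to build a $(\mathbb{Z}_{4k},\beta_{k})$-orientation of $2G$ in which every cut of $G$ contains an anti-parallel pair. But you never build one; the construction is explicitly deferred (``I anticipate handling this by iterating\dots will be the delicate point''), and the mechanism you sketch does not work as stated. Theorem~\ref{evenpartialextending-old} admits a prescribed partial orientation only at a \emph{single} vertex $z$, and once the extension is performed you have no further control over the rest of the graph, so ``iterating vertex-by-vertex'' is not an operation the cited tools support. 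Moreover, even if you could force an anti-parallel pair into every star cut $(\{v\},V\setminus\{v\})$, that says nothing about cuts $(X,X^c)$ with $|X|\ge 2$, which is exactly where the difficulty lies: the slack between $d_{2G}(X)\ge 6k$ and the threshold $6k-3$ is only three edges, independent of $|X|$, so there is no obvious room to reserve an anti-parallel pair in every cut. The heart of part~\ref{F0} is therefore missing.

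For the record, the paper takes a different and self-contained route for~\ref{F0}: it proves that for a sufficiently large $s=s(G)$ the signed graph admits a $(2ks-2,\,ks-s)$-flow, and concludes from $\frac{2ks-2}{(k-1)s}<\frac{2k}{k-1}$. Since $(2s-2)G$ is only $(6ks-6k)$-edge-connected while Theorem~\ref{THM:LWZ20} for modulus $4ks-4$ would need $6ks-9$, the paper adds an apex vertex $z$ joined to every vertex by $6k-8$ parallel edges, pre-orients $E(z)$ in a balanced way, applies Theorem~\ref{evenpartialextending-old} to the augmented graph, and deletes $z$; the balance at $z$ ensures the restriction to $(2s-2)G$ is still a $(\mathbb{Z}_{4ks-4},\beta_{ks-1})$-orientation. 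If you wish to salvage your tight-cut strategy, you would need an extension tool that prescribes orientations on a spanning substructure rather than at one vertex --- compare the proof of Theorem~\ref{thm:3spanningtree<4flow}, where a third spanning tree plays precisely that role.
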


	\begin{proof}
		Let $(G, \sigma)$ be a signed graph and let $D$ be an orientation on $G$. We define a mapping $\beta_{_{\ell}}: V(G)\to \{0, 2\ell\}$ satisfying the following:  
		$$\beta_{_{\ell}}(v)\equiv 2\ell(\overleftarrow{d^{+}}(v)-\overrightarrow{d^{+}}(v))~\pmod{4\ell}.$$
		Note that $\sum\limits_{v\in V(G)}\beta_{_{\ell}}(v)\equiv 0~\pmod{4\ell}$. Moreover, $\beta_{_{\ell}}(v)=2\ell$ if $d^+(v)$ is odd and $\beta_{_{\ell}}(v)=0$ otherwise. Thus $\beta_{_{\ell}}(v)\equiv 2\ell\cdot d^+(v)~\pmod{4\ell}$.

		\medskip    
		\noindent    
		{\ref{F-1}.} To prove that $(G, \sigma)$ admits a $({2k},{k-1})$-flow, by Theorem~\ref{thm:orientation}, it would be enough to show that $2G$ admits a $(\mathbb{Z}_{4k}, \beta_{_{k}})$-orientation. To this end, we must first verify that $\beta_{_{k}}$ is a parity-compliant $4k$-boundary of $2G$. That is because $\beta_{_{k}}(v)$ is an even value for each vertex $v$ of $G$ and in $2G$ every vertex is of even degree. 	
		To get the required orientation on $2G$ we apply Theorem~\ref{THM:LWZ20}, noting that $2G$ is a $(6k-2)$-edge-connected graph.

		The proof of \ref{F+1} is quite similar and we provide this proof before proving \ref{F0}.
		
		\medskip    
		\noindent 	
		{\ref{F+1}.} To prove that $(G, \sigma)$ admits a $({4k+2},{2k-1})$-flow, by Theorem~\ref{thm:orientation}, it would be enough to show that $4G$ admits a $(\mathbb{Z}_{8k+4}, \beta_{_{2k+1}})$-orientation. That $\beta_{_{2k+1}}$ is a parity-compliant $(8k+4)$-boundary of $4G$ is implied similar to the previous case. 	
		To get the required orientation on $4G$ once again we apply Theorem~\ref{THM:LWZ20}, noting that $4G$ is a $(12k+4)$-edge-connected graph.

		\medskip
		\noindent    
		{\ref{F0}.} For this claim, we aim to prove that there exists a sufficiently large $s=s(G)$ such that $(G, \sigma)$ admits a $({2ks-2}, {ks-s})$-flow. 
		Our claim then follows by observing that $\frac{2ks-2}{(k-1)s} <\frac{2k}{k-1}$. In order to get a $({2ks-2}, {ks-s})$-flow in $(G, \sigma)$, using Theorem~\ref{thm:orientation}, it would be sufficient to find a $(\mathbb{Z}_{4ks-4}, \beta_{_{ks-1}})$-orientation on $(2s-2)G$. One may easily check that $\beta_{_{ks-1}}$ is a parity-compliant $(4ks-4)$-boundary of $(2s-2)G$.
		
		To this end we first build a graph $H$ by adding a vertex $z$ to the graph $(2s-2)G$ and connecting it to each vertex of $G$ with $6k-8$ parallel edges. Observe that $d_{\scriptsize H}(z)=(6k-8)|V(G)|$.
		
		Next we extend $\beta_{_{ks-1}}$ to $z$ by defining $\beta_{_{ks-1}}(z)= 0$, but with slight abuse of notation we use the same name $\beta_{_{ks-1}}$. 
		By the construction of $H$ (from $(2s-2)G)$), the degree of each vertex in $H$ is even. It is then easily verified that the extended $\beta_{_{ks-1}}$ is a parity-compliant $(4ks-4)$-boundary of $H$.
		
		Next we shall apply Theorem~\ref{evenpartialextending-old} to obtain a $(\mathbb{Z}_{4ks-4}, \beta_{_{ks-1}} )$-orientation on $H$. To that end, we first consider the partial orientation $D_{z}$ at the vertex $z$ defined as follows: For each vertex $v$ of $G$, orient half of the edges connected to $z$ toward $v$ and the other half away from $v$. If we choose $s$ large enough, then we have $d(z) \leq (4ks-4)-2+|\beta_{_{ks-1}}(z)|$. Here the choice of $s$ depends on the order of $G$. For each subset $A$ of $V(G)$ with $|V(G)\setminus A|>1$, since $(2s-2)G$ is $(6ks-6k)$-edge-connected, we have at least $6ks-6k$ edges connecting $A$ to $V(G)\setminus A$. Note that, since $z\not \in A$, there are $(6k-8)|A|$ edges connecting $z$ to $A$. Thus $d_{\scriptsize H}(A)\geq 6ks-6k+(6k-8)|A| \geq 6ks-8$, the inequality being the consequence of the fact that $|A|\geq 1$ and $k\geq 2$. Therefore, noting that $|\beta_{_{ks-1}}(A)|\leq 2ks-2$, we have that $d_H(A)\geq 6ks-8 \geq (4ks-4)-2+|\beta_{_{ks-1}}(A)|$. 
		
		As the conditions of Theorem~\ref{evenpartialextending-old} are satisfied for $H$ with $z$ being the special vertex, we have an extension of $D_{z}$ to a $(\mathbb{Z}_{4ks-4}, \beta_{_{ks-1}})$-orientation $D$ on $H$. We claim that the restriction of $D$ to $(2s-2)G$ is also a $(\mathbb{Z}_{4ks-4}, \beta_{_{ks-1}})$-orientation on it. This is the case because, for each vertex $v$, the number of edges oriented to $z$ from $v$ and oriented to $v$ from $z$ are chosen to be the same.  We can then apply Theorem~\ref{thm:orientation} to get a $({2ks-2}, {ks-s})$-flow in $(G, \sigma)$. 
	\end{proof}
	
	One of the key points of the proof in the previous theorem is to consider $2G$ or $4G$ so that the $\beta$ function we consider is a parity-compliant boundary. If the graph itself had no odd-degree vertex, then we can directly work with $G$. In the case that $G$ is $(6k-2)$-edge-connected, this leads to a slight improvement on the bound for the flow index as follows.
	
	\begin{theorem}\label{thm:Eulerian}
		For any signed Eulerian graph $(G, \sigma)$, if $G$ is $(6k-2)$-edge-connected, then $\Phi_c(G, \sigma)\leq \frac{4k}{2k-1}$.
	\end{theorem}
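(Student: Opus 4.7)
The plan is to exploit the Eulerian hypothesis via Lemma~\ref{lem:EulerianEquivalence}, which—unlike the route through Theorem~\ref{thm:orientation} used in Theorem~\ref{thm:mainFlow}—lets us work directly on $G$ rather than on $2G$. By Lemma~\ref{lem:EulerianEquivalence}(3), it suffices to produce an orientation $D$ of $G$ with
\begin{equation*}
\overleftarrow{d_D}(v)-\overrightarrow{d_D}(v)\equiv 2k\cdot d^+_{\hat{G}}(v)\pmod{4k}\qquad \text{for every } v\in V(G).
\end{equation*}
Defining $\beta:V(G)\to\{0,2k\}$ by $\beta(v)\equiv 2k\cdot d^+_{\hat{G}}(v)\pmod{4k}$, what I need is exactly a $(\mathbb{Z}_{4k},\beta)$-orientation of $G$.

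First I would verify that $\beta$ is a parity-compliant $4k$-boundary of $G$: the values of $\beta$ lie in $\{0,2k\}$ and so are always even, and since $G$ is Eulerian every $d(v)$ is even, giving $\beta(v)\equiv d(v)\pmod{2}$; also $\sum_{v}\beta(v)\equiv 2k\sum_{v}d^+_{\hat{G}}(v)=4k\,|E^+_{\hat{G}}|\equiv 0\pmod{4k}$.

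I would then apply Theorem~\ref{THM:LWZ20} with the theorem's parameter $k$ replaced by $2k$: its hypothesis $2k\geq 3$ forces only $k\geq 2$, and its connectivity requirement of $(6k-3)$ is slightly weaker than the assumed $(6k-2)$-edge-connectivity. This delivers the desired $(\mathbb{Z}_{4k},\beta)$-orientation, and Lemma~\ref{lem:EulerianEquivalence} converts it into a circular $\frac{4k}{2k-1}$-flow. The residual case $k=1$ reduces to showing $\Phi_c(G,\sigma)\leq 4$ under $4$-edge-connectivity, which is Theorem~\ref{thm:3-edgeconnected6-flow}. The crux is therefore conceptual rather than technical: the Eulerian condition is exactly what allows a modulo $4k$ (rather than modulo $8k$) boundary to live on $G$ itself, and without that reduction—e.g.\ if one tried to run the $2G$ argument from Theorem~\ref{thm:mainFlow}—the $(6k-2)$-edge-connectivity hypothesis would fall short by one edge of what Theorem~\ref{THM:LWZ20} needs.
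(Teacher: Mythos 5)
Your proposal is correct and follows essentially the same route as the paper: apply Theorem~\ref{THM:LWZ20} (with its parameter set to $2k$, so the $(6k-3)$-edge-connectivity it needs is supplied by the $(6k-2)$-edge-connectivity hypothesis) directly to $G$, using the Eulerian condition to make $\beta(v)\equiv 2k\cdot d^+_{\hat G}(v)\pmod{4k}$ parity-compliant, and then convert the resulting $(\mathbb{Z}_{4k},\beta)$-orientation into a circular $\frac{4k}{2k-1}$-flow via Lemma~\ref{lem:EulerianEquivalence}. Your explicit treatment of the $k=1$ case (via Theorem~\ref{thm:3-edgeconnected6-flow}) is a point of care the paper leaves implicit.
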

	
	\begin{proof}
		Applying Theorem~\ref{THM:LWZ20} to $G$ we get a $(\mathbb{Z}_{4k}, \beta_{k})$-orientation on $G$, where $\beta_{k}\equiv 2k\cdot d^+_{\scriptscriptstyle \hat{G}}(v)~\pmod {4k}$. The claim then is concluded by the equivalence of part (1) and part (4) in Lemma~\ref{lem:EulerianEquivalence}.
	\end{proof}

	\section{Application to planar graphs}\label{sec:Application to planar graphs}
	
	As mentioned in the introduction, the circular flow index of a signed plane graph is equal to the circular chromatic number of its dual. Thus we have the following corollary of Theorem~\ref{thm:mainFlow}. 
	
	\begin{corollary}\label{coro:PlanarHighGirth}
		Given a signed planar graph $(G, \sigma)$ and an integer $k\geq2$, the following claims hold.
		\begin{enumerate}[label=(\arabic*)]
			\item\label{g-1} If $G$ is of girth at least $3k-1$, then $\chi_c(G, \sigma) \leq \dfrac{2k}{k-1}$.
			\item\label{g0} If $G$ is of girth at least $3k$, then $\chi_c(G, \sigma) < \dfrac{2k}{k-1}$.
			\item\label{g+1} If $G$ is of girth at least $3k+1$, then $\chi_c(G, \sigma) \leq \dfrac{4k+2}{2k-1}$.
		\end{enumerate}	
	\end{corollary}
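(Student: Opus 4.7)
The plan is to invoke planar duality to reduce this corollary directly to Theorem~\ref{thm:mainFlow}. Recall from the introduction that for a signed plane graph $(G,\sigma)$ with dual $(G^{*},\sigma^{*})$, a circular $r$-flow in one corresponds to a circular $r$-tension in the other, so $\Phi_c(G,\sigma)=\chi_c(G^{*},\sigma^{*})$. Applying this to the dual pair, we equivalently have $\chi_c(G,\sigma)=\Phi_c(G^{*},\sigma^{*})$, since $(G^{*})^{*}=G$ and $(\sigma^{*})^{*}=\sigma$. Thus it suffices to bound the circular flow index of the dual signed graph.

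The second ingredient is the classical fact that for a $2$-edge-connected plane graph, the edge-cuts of $G$ correspond bijectively to the cycles of $G^{*}$, so the edge-connectivity of $G^{*}$ equals the girth of $G$. In particular:
\begin{itemize}
\item If the girth of $G$ is at least $3k-1$, then $G^{*}$ is $(3k-1)$-edge-connected.
\item If the girth of $G$ is at least $3k$, then $G^{*}$ is $3k$-edge-connected.
\item If the girth of $G$ is at least $3k+1$, then $G^{*}$ is $(3k+1)$-edge-connected.
\end{itemize}

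First I would deal with the trivial degenerate cases: if $G$ has a bridge, the girth hypothesis forces the whole graph to be acyclic only when we cannot satisfy it, so we may assume $G$ is $2$-edge-connected (otherwise we argue block-by-block; each non-trivial block has large girth and the coloring glues together, and isolated edges contribute nothing to $\chi_c$). Under this assumption, the girth/edge-connectivity duality above applies cleanly. Then I would apply parts \ref{F-1}, \ref{F0}, \ref{F+1} of Theorem~\ref{thm:mainFlow} to $(G^{*},\sigma^{*})$ to obtain
\[
\Phi_c(G^{*},\sigma^{*})\leq \dfrac{2k}{k-1}, \quad \Phi_c(G^{*},\sigma^{*})< \dfrac{2k}{k-1}, \quad \Phi_c(G^{*},\sigma^{*})\leq \dfrac{4k+2}{2k-1},
\]
respectively, and translate each back through $\chi_c(G,\sigma)=\Phi_c(G^{*},\sigma^{*})$ to get the three asserted inequalities.

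The only potentially delicate step is making sure the duality exchange of ``cycles and cuts'' behaves well in the signed setting and under the various connectivity reductions. Since the flow/tension equivalence is already recorded in the introduction and the signature is transferred edge-by-edge to the dual, no new signed-graph argument is required; the proof is essentially a one-line dualization of Theorem~\ref{thm:mainFlow}, and this is why the corollary can be stated without a separate proof in the paper.
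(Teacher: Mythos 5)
Your proposal is correct and follows exactly the route the paper intends: the corollary is obtained by dualizing Theorem~\ref{thm:mainFlow} via the identity $\chi_c(G,\sigma)=\Phi_c(G^{*},\sigma^{*})$ together with the standard correspondence between the girth of a plane graph and the edge-connectivity of its dual. The paper states this as an immediate consequence without further argument, and your additional care about bridges and block decomposition is a harmless (and reasonable) elaboration of the same proof.
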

	
	For the dual of Theorem~\ref{thm:Eulerian}, we will present a stronger result replacing the girth condition with the negative girth condition. To this end we first present two lemmas.

	\begin{lemma}\label{lem:D_z0Extendable}
		Given a positive integer $k$, a graph $G$ and a vertex $z$ of it, assume that the cut $(\{z\}, V(G)\setminus \{z\})$ is of size at most $6k-2$, but every other cut $(X, X^c)$ is of size at least $6k-2$. Then given any parity-compliant $4k$-boundary $\beta$ of $G$ and any orientation $D_z$ of the edges incident to $z$ satisfying that $\overleftarrow{d_{ \scriptscriptstyle D_{\scriptscriptstyle z}}}(z)-\overrightarrow{d_{\scriptscriptstyle D_{\scriptscriptstyle z}}}(z) \equiv \beta(z) ~\pmod {4k}$, $D_{z}$ can be extended to a $(\mathbb{Z}_{4k}, \beta)$-orientation on $G$.
	\end{lemma}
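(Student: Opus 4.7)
The plan is to reduce the statement to Theorem~\ref{evenpartialextending-old} applied with the modulus $2k$ of that theorem taken to be our $4k$. This substitution turns the hypothesis $k\ge 3$ of Theorem~\ref{evenpartialextending-old} into $2k\ge 3$, so the argument applies for $k\ge 2$; the small case $k=1$ is handled separately by a direct $\mathbb{Z}_4$-connectivity argument (via Theorem~\ref{thm:Z_6Z_4}). Under the substitution, the conditions to verify are that $\beta$ is a parity-compliant $4k$-boundary (given), that the local bound $d(z)\le 4k-2+|\beta(z)|$ holds at $z$, and that $d(A)\ge 4k-2+|\beta(A)|$ for every $A\subset V(G)\setminus\{z\}$ with $A\neq\{v_0\}$ and $|V(G)\setminus A|>1$.

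The global cut bound will be immediate. For any admissible $A$, the cut $(A,A^c)$ is distinct from $(\{z\},V(G)\setminus\{z\})$, hence $d(A)\ge 6k-2$ by hypothesis. Since $\beta$ takes values in $\{0,\pm 1,\ldots,\pm 2k\}$ we have $|\beta(A)|\le 2k$, and therefore $d(A)\ge 6k-2\ge 4k-2+|\beta(A)|$.

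The local bound at $z$ is the delicate point. When $d(z)\le 4k-2$ it is automatic. When $4k-1\le d(z)\le 6k-2$, I would exploit $D_z$ itself: set $m=\overleftarrow{d_{D_z}}(z)-\overrightarrow{d_{D_z}}(z)$, so that $m\equiv\beta(z)\pmod{4k}$ and $|m|\le d(z)$. Because the boundary condition is modular, replacing the chosen representative of $\beta(z)$ by the integer $m$ itself is legal, and under this replacement $D_z$ attains $\beta(z)$ on the nose. Whenever $|m|\ge d(z)-(4k-2)$ the hypotheses of Theorem~\ref{evenpartialextending-old} are then satisfied, and the extension follows directly; this covers in particular every configuration with $|m|\ge 2k$, since then $d(z)-(4k-2)\le 2k\le|m|$.

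The residual case, and the main obstacle, is $d(z)>4k-2$ with $|m|<d(z)-4k+2$; an easy arithmetic check forces $|m|<2k$ here, so $D_z$ is close to balanced at $z$ and both in-arcs and out-arcs are plentiful. To handle this I plan a splitting-off step at $z$: pick a compatible pair of arcs $uz\in\overrightarrow{E_{D_z}}(z)$ and $zv\in\overleftarrow{E_{D_z}}(z)$, delete them from $D_z$, and add the single oriented arc $uv$. This preserves $\beta$ and reduces $d(z)$ by $2$, and with a Mader/Lov\'asz-style choice of the pair it maintains the cut condition $d(A)\ge 6k-2$ for every $A\subset V(G)\setminus\{z\}$ with $|V(G)\setminus A|>1$. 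Iterating brings $d(z)$ into the previously handled range, and finally pulling back each lifted arc $uv$ to the two-path $u\to z\to v$ recovers an extension of the original $D_z$ in $G$. Securing the existence of a suitable splitting pair at every stage, while simultaneously respecting the boundary-compliant cut condition, is the technical heart of the argument.
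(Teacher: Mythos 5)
Your reduction to Theorem~\ref{evenpartialextending-old} with modulus $4k$, and your verification of the cut condition $d(A)\ge 4k-2+|\beta(A)|$ for all admissible $A$, match the paper. The gap is in the vertex condition at $z$, in two places. First, when $m=\overleftarrow{d_{D_z}}(z)-\overrightarrow{d_{D_z}}(z)$ satisfies $|m|>2k$ (i.e.\ $m=\beta(z)\pm 4k$), ``replacing the chosen representative of $\beta(z)$ by $m$'' is not legal: a parity-compliant $4k$-boundary must by definition take values in $\{0,\pm1,\dots,\pm 2k\}$, and the hypothesis $d(z)\le 4k-2+|\beta(z)|$ of Theorem~\ref{evenpartialextending-old} is stated for that canonical representative; substituting a larger congruent integer only weakens the condition you are supposed to verify, so the theorem does not apply in that configuration. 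Second, the residual case you isolate ($4k-2<d(z)\le 6k-2$ with $D_z$ nearly balanced at $z$) is precisely where the content of the lemma lies, and your splitting-off plan is not carried out: a directed, boundary-compatible splitting at $z$ that preserves $d(A)\ge 6k-2$ for every $A\subset V(G)\setminus\{z\}$ with $|V(G)\setminus A|>1$ is itself a nontrivial statement (Mader's theorem is undirected and does not respect the prescribed orientation $D_z$), and you acknowledge you have not secured it.

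The paper closes both holes with a single, purely local device, namely Definition~\ref{def:D_e} and Observation~\ref{obs:D,B-->D_e,B_e}: after normalizing $\beta(z)\in\{0,1,\dots,2k\}$, one flips an explicitly computed number of in-arcs at $z$ into out-arcs, each flip adding $2$ to $\beta(z)$ and subtracting $2$ at the other endpoint, so that the modified boundary satisfies $\beta^*(z)=d(z)-4k+2\in\{1,\dots,2k\}$ and the modified partial orientation $D^*_z$ achieves it exactly; the cut conditions for sets not containing $z$ are unaffected since $|\beta^*(A)|\le 2k$ always. Theorem~\ref{evenpartialextending-old} then applies to $(D^*_z,\beta^*)$, and flipping the same arcs back recovers an extension of the original $D_z$ by Observation~\ref{obs:D,B-->D_e,B_e}. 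The count of arcs to flip is worked out separately in the three cases $m\in\{\beta(z),\beta(z)+4k,\beta(z)-4k\}$, which disposes of exactly the two configurations your argument leaves open, with no splitting-off needed. (Your remark that the hypothesis $k\ge 3$ of Theorem~\ref{evenpartialextending-old} becomes $2k\ge 3$ under the substitution, so that $k=1$ would need separate treatment, is a fair observation that the paper does not address explicitly.)
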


	\begin{proof}		
		Assume that $\beta$ and $D_z$ are given as in the lemma. Given an orientation $D$, let $-D$ be the orientation obtained from $D$ by flipping every arc. Then $D$ is a $(\mathbb{Z}_{4k}, \beta)$-orientation on $G$ if and only if $-D$ is a $(\mathbb{Z}_{4k}, -\beta)$-orientation on $G$. Hence, we may assume that $\beta(z)\in \{0, 1, \ldots, 2k\}$.
		
		Our goal is to apply Theorem~\ref{evenpartialextending-old}. Observing that since $\beta(A)$ is assumed to be in $\{0, \pm 1, \ldots, \pm 2k\}$, we have $|\beta(A)|\leq 2k$ for every $A\subset  V(G)$. Thus the condition $d(A)\ge 4k-2 + |\beta(A)|$ holds for every choice of $A$ except $A=\{z\}$, for which the conditions are not required. We only need to consider the condition on the vertex $z$. If $d(z) \leq 4k -2 +\beta(z)$, then we can directly apply Theorem~\ref{evenpartialextending-old}. So we assume $d(z) -4k +2 -\beta(z)> 0$. Combining the fact that $d(z) \leq 6k-2$ and $\beta(z)>0$, we have $4k-2< d(z)\leq 6k-2$. We aim to modify both $D_z$ and $\beta$ following the operation defined in Definition~\ref{def:D_e} so that by Observation~\ref{obs:D,B-->D_e,B_e} we can apply Theorem~\ref{evenpartialextending-old} to the new partial orientation and the new boundary function. What remains to do is to modify $D_z$ and $\beta$ to $D^{*}_z$ and $\beta^{*}$ respectively such that $\beta^*(z)=d(z)-4k+2$ and $D^*$ achieves $\beta^*$ at $z$.

		Since $\overleftarrow{d_{ \scriptscriptstyle D_{\scriptscriptstyle z}}}(z)-\overrightarrow{d_{\scriptscriptstyle D_{\scriptscriptstyle z}}}(z) \equiv \beta(z) ~\pmod {4k}$, $\overleftarrow{d_{ \scriptscriptstyle D_{\scriptscriptstyle z}}}(z)+\overrightarrow{d_{\scriptscriptstyle D_{\scriptscriptstyle z}}}(z)=d(z) \leq 6k-2$ and $0\leq \beta (z)\leq 2k$,  we have the following three possibilities: $\overleftarrow{d_{ \scriptscriptstyle D_{\scriptscriptstyle z}}}(z)-\overrightarrow{d_{\scriptscriptstyle D_{\scriptscriptstyle z}}}(z)\in \{\beta(z), \beta(z)+4k, \beta(z)-4k\}$.

		\begin{enumerate}[label=(\arabic*)]
			\item $\overleftarrow{d_{ \scriptscriptstyle D_{\scriptscriptstyle z}}}(z)-\overrightarrow{d_{\scriptscriptstyle D_{\scriptscriptstyle z}}}(z)= \beta(z)$.
			
			In this case, we need to flip $\frac{d(z)-\beta(z)}{2}-(2k-1)$ many in-arcs at $z$ in $D_z$ to out-arcs. A first comment here is that since $d(z)+\beta(z)=2\overleftarrow{d_{ \scriptscriptstyle D_{\scriptscriptstyle z}}}(z)$, $d(z)+\beta(z)$ and hence, $d(z)-\beta(z)$, is an even number, thus $\frac{d(z)-\beta(z)}{2}$ is an integer. As we have assumed $d(z) -4k +2 -\beta(z)> 0$, we know that $\frac{d(z)-\beta(z)}{2}-(2k-1)$ is a positive integer. We also need to show that there are indeed $\frac{d(z)-\beta(z)}{2}-(2k-1)$ in-arcs to flip. That is to claim that $\frac{d(z)-\beta(z)}{2}-(2k-1)\leq \overrightarrow{d_{\scriptscriptstyle D_{\scriptscriptstyle z}}}(z)$. Recalling that $\overrightarrow{d_{\scriptscriptstyle D_{\scriptscriptstyle z}}}(z)=\frac{d(z)-\beta(z)}{2}$, this is trivially the case. 
			
			Let $D^{*}_z$ be the partial orientation obtained from $D_z$ by flipping $\frac{d(z)-\beta(z)}{2}-(2k-1)$ in-arcs of $z$ and let $\beta^{*}$ be the corresponding boundary function obtained as defined in the Observation~\ref{obs:D,B-->D_e,B_e}. We have $\beta^{*}(z)=\beta(z)+2(\frac{d(z)-\beta(z)}{2}-(2k-1))=d(z) -4k+2$, noting that this value must be in the set $\{0, \pm 1, \ldots, \pm k\}$. Thus we may apply Theorem~\ref{evenpartialextending-old} to extend the partial orientation $D^{*}_{z}$ to a $(\mathbb{Z}_{4k}, \beta^{*})$-orientation $D^{*}$. Then flipping back the arcs we had flipped, by Observation~\ref{obs:D,B-->D_e,B_e}, we get to the orientation $D$ as an extension of $D_{z}$.

			\item $\overleftarrow{d_{ \scriptscriptstyle D_{\scriptscriptstyle z}}}(z)-\overrightarrow{d_{\scriptscriptstyle D_{\scriptscriptstyle z}}}(z)= \beta(z)+4k$.

			In this case, we have $\overleftarrow{d_{ \scriptscriptstyle D_{\scriptscriptstyle z}}}(z)=\frac{d(z)+\beta(z)}{2}+2k$.
			Since $d(z) -4k+2-\beta(z) >0$ and that we have assumed $\beta(z)\geq 0$, we have $d(z) \geq 4k-1$. Thus $\overleftarrow{d_{ \scriptscriptstyle D_{\scriptscriptstyle z}}}(z)\geq 4k$. We aim at flipping $(4k-1)-\frac{d(z)-\beta(z)}{2}$ in-arcs at $z$, to do so we need to show that this number is not a negative one and that it does not exceed the total number of in-arcs at $z$. The former is the case because $4k-1\leq d(z) \leq 6k-2$ and $0\leq \beta(z) \leq 2k$. The latter is the case because  $4k-1-\frac{d(z)-\beta(z)}{2}< 2k+\frac{d(z)+\beta(z)}{2}=\overleftarrow{d_{ \scriptscriptstyle D_{\scriptscriptstyle z}}}(z)$. 
			
			Let $D^{*}_z$ be the partial orientation from $D_{z}$ by flipping $4k-1-\frac{d(z)-\beta(z)}{2}$ in-arcs at $z$, and let $\beta^{*}$ be the parity-compliant $2k$-boundary obtained from $\beta$ following Observation~\ref{obs:D,B-->D_e,B_e}. Noting that $\beta^{*}(z)$ must be in the set $\{0, \pm 1, \ldots, \pm 2k\}$, and consider the limits $4k-1\leq d(z) \leq 6k-2$, we can compute the value of $\beta^{*}(z)=d(z) -4k+2$. Theorem~\ref{evenpartialextending-old} can then be applied to extend $D^{*}_z$ to an orientation $D^{*}$ which is a $(\mathbb{Z}_{4k}, \beta^*)$-orientation on $G$. Then following observation \ref{obs:D,B-->D_e,B_e} we get the required orientation $D$ on $G$.

			\item $\overleftarrow{d_{ \scriptscriptstyle D_{\scriptscriptstyle z}}}(z)-\overrightarrow{d_{\scriptscriptstyle D_{\scriptscriptstyle z}}}(z)= \beta(z)-4k$.
			
			In this case, we have  $\overrightarrow{d_{\scriptscriptstyle D_{\scriptscriptstyle z}}}(z)=\frac{d(z)-\beta(z)}{2}+2k$. Recall that $d(z) -\beta(z) \geq 2k$, thus we may flip a set of $\frac{d(z)-\beta(z)}{2} +1$ in-arcs at $z$. After so many flips, and following Observation~\ref{obs:D,B-->D_e,B_e}, we have $\beta^{*}(z)\equiv d(z)+2 ~\pmod{4k}$. Since $4k-2 <d(z) \leq 6k-2$, and $\beta^{*}(z) \in \{0, \pm 1, \ldots, \pm 2k\}$, we must have $\beta^{*}(z)=d(z)-4k+2$. Thus, as before, we may apply Theorem~\ref{evenpartialextending-old} on $D^{*}_z$ and $\beta^*$ to get the orientation $D^{*}$ from which, using Observation~\ref{obs:D,B-->D_e,B_e}, we get the required orientation $D$. 
		\end{enumerate}
		This completes the proof.
	\end{proof}

	The other lemma we need is the bipartite analogue of the folding lemma from \cite{NRS13}.
	
	\begin{lemma}{\em [Bipartite folding lemma]} \label{lem:Folding Lemma}
		Let $\hat{G}$ be a signed bipartite plane graph and let $2k$ be the length of its shortest negative cycle. Assume that $C$ is a facial cycle that is not a negative $2k$-cycle. Then there are vertices $v_{i-1}, v_i,v_{i+1}$, consecutive in the cyclic order of the boundary of $C$, such that identifying $v_{i-1}$ and $v_{i+1}$, after a possible switching at one of the two vertices, the resulting signed graph remains a signed bipartite plane graph whose shortest negative cycle is still of length $2k$.
	\end{lemma}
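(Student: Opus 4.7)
The plan is by contradiction. Suppose that for every consecutive triple $(v_{i-1}, v_i, v_{i+1})$ along the boundary of $C$, no identification as described yields a signed bipartite plane graph of shortest negative cycle length $2k$. First I would observe that bipartiteness and planarity are automatic: since $|C|$ is even, $v_{i-1}$ and $v_{i+1}$ lie in the same part of the bipartition, so the identification preserves bipartiteness; and since they lie on a common face, the identification can be realized in the plane. Hence the only possible obstruction is the creation of a negative cycle of length less than $2k$.

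Next I would translate this obstruction into a structural statement in $\hat{G}$. After optionally switching at $v_{i-1}$ or $v_{i+1}$, we may assume that the edges $v_{i-1}v_i$ and $v_iv_{i+1}$ have matching signs, so the digon arising in the identified graph is positive. Any other cycle produced by the identification corresponds to a $(v_{i-1},v_{i+1})$-path $P$ in $\hat{G}$, and its sign in the identified graph equals the sign in $\hat{G}$ of the closed walk $P \cup v_{i+1}v_iv_{i-1}$. Combined with bipartiteness and the hypothesis that the negative girth of $\hat{G}$ is exactly $2k$, the failure at index $i$ produces a negative $2k$-cycle $C_i$ in $\hat{G}$ passing through the path $v_{i-1}v_iv_{i+1}$; its remaining part is a $(v_{i-1},v_{i+1})$-path $P_i$ of length $2k-2$ in $\hat{G}$ (avoiding $v_i$, or else one can split $P_i$ at $v_i$ into two shorter subpaths and derive a shorter negative cycle by case analysis).

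To reach a contradiction I would exploit planarity together with the fact that $C$ is a face: each path $P_i$ must lie in the closed region of the plane on the opposite side of $C$ from its open face, so all the $P_i$'s inhabit this one exterior region. For two consecutive indices, $C_i$ and $C_{i+1}$ share the edge $v_iv_{i+1}$, and their symmetric difference $C_i \triangle C_{i+1}$ is a positive even-degree subgraph of total length at most $4k-2$. Using a Jordan curve argument on the bounded subregion enclosed by $P_i \cup P_{i+1}$ together with the arc $v_{i-1}v_iv_{i+1}v_{i+2}$ of $C$, one extracts a cycle lying entirely in a prescribed planar region; iterating around $C$ while bookkeeping signs and lengths, one forces either a negative cycle in $\hat{G}$ of length strictly less than $2k$ (contradicting the negative-girth hypothesis) or equality $C=C_i$ for some $i$ (contradicting the hypothesis that $C$ is not a negative $2k$-cycle). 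An induction on $|C|$ reduces to small base cases that are checked directly.

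The main obstacle will be this last step: the combined sign and length bookkeeping in the planar exterior of $C$. While $C_i \triangle C_{i+1}$ is routinely a positive even-degree subgraph of total length at most $4k-2$, its decomposition into connected components is subtle — it may split into two positive cycles, neither immediately shorter than $2k$ — so extracting one short negative cycle requires exploiting the precise planar embedding together with bipartite parity. Handling the degenerate cases in which some $P_i$ revisits vertices of $C$ other than its endpoints, or in which $P_i$ and $P_{i+1}$ share substantial overlap beyond the common edge $v_iv_{i+1}$, is where the argument becomes technically delicate.
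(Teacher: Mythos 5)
The paper itself does not prove this lemma---it is imported as the bipartite analogue of the folding lemma from \cite{NRS13}---so your proposal has to stand on its own. Its first two thirds are correct and standard: bipartiteness and planarity are indeed preserved automatically; the switching normalization $\sigma(v_{i-1}v_i)=\sigma(v_iv_{i+1})$ is forced by the digon created at the identified vertex; and the failure of the fold at index $i$ does yield a negative closed walk $P_i\cup v_{i+1}v_iv_{i-1}$ of length at most $2k$, which, since any decomposition of a negative closed walk into cycles contains a negative cycle and the negative girth is $2k$, must be a single negative $2k$-cycle $C_i$ through $v_{i-1}v_iv_{i+1}$. That reduction is exactly right.

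The final step, however---deriving a contradiction from the existence of all the $C_i$---is the entire content of the folding lemma, and you have not supplied it; you explicitly defer it as ``the main obstacle.'' The one concrete tool you name, the symmetric difference $C_i\triangle C_{i+1}$, provably cannot close the argument by itself: it is a positive even-degree subgraph of size at most $4k-2$, so in a bipartite signed graph it may decompose entirely into positive even cycles and yield no short negative cycle, as you yourself concede. What is actually needed is a planarity-based extremal argument: with $F$ the open face bounded by $C$, choose each $C_i$ among all negative $2k$-cycles through $v_{i-1}v_iv_{i+1}$ so as to enclose the minimum number of faces on the side away from $F$, and use the Jordan curve theorem on $C_i\cup C_{i+1}$ (which share the edge $v_iv_{i+1}$ lying on the boundary of $F$) to re-route to a negative $2k$-cycle enclosing strictly fewer faces, or to a negative cycle shorter than $2k$, unless $C$ itself is one of the $C_i$---contradicting, respectively, minimality, the negative-girth hypothesis, or the assumption that $C$ is not a negative $2k$-cycle. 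Phrases such as ``iterating around $C$ while bookkeeping signs and lengths'' and ``induction on $|C|$'' do not describe such an argument in any verifiable form. As written, the proposal is a correct setup together with an accurate inventory of the difficulties, not a proof.
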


	By applying this lemma repeatedly, one gets a homomorphic image of $\hat{G}$ which is also a signed bipartite plane graph in which every facial cycle is a negative cycle of length exactly $2k$. Based on this fact and Lemma~\ref{lem:D_z0Extendable}, we are ready to prove the following. 
	
	\begin{theorem}\label{thm:NegativeBipartite6k-2}
		Every signed bipartite planar graph of negative-girth at least $6k-2$ admits a circular $\frac{4k}{2k-1}$-coloring. 
	\end{theorem}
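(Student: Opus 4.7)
The plan is to use planar duality to reduce the coloring statement to a flow statement, and there combine Theorem~\ref{thm:Eulerian} with the bipartite folding lemma and Lemma~\ref{lem:D_z0Extendable}. First, I would apply Lemma~\ref{lem:Folding Lemma} iteratively to $\hat{G}$: whenever a facial cycle is not a negative $(6k-2)$-cycle, fold it by identifying two vertices at cyclic distance $2$ (after a possible switching). Each fold is a homomorphism that preserves the negative-girth $6k-2$ and the bipartite plane structure, so after finitely many steps the graph strictly shrinks to a signed bipartite plane graph $\hat{G}'$, admitting a homomorphism $\hat{G} \to \hat{G}'$, in which every facial cycle is a negative $(6k-2)$-cycle. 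Since $\chi_c$ is monotone under homomorphism, it suffices to bound $\chi_c(\hat{G}')$.

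Second, pass to the dual signed plane graph $\hat{H} := (\hat{G}')^*$, for which $\Phi_c(\hat{H}) = \chi_c(\hat{G}')$. Because $\hat{G}'$ is bipartite, every face has even length, so every vertex of $\hat{H}$ has even degree and $\hat{H}$ is a signed Eulerian graph. Moreover, cycles in $\hat{G}'$ are in bijection with bonds of $\hat{H}$, and negative cycles correspond to negative bonds; since every face of $\hat{G}'$ is a $(6k-2)$-cycle, the minimum face length is $6k-2$, giving girth $6k-2$ for $\hat{G}'$ and hence edge-connectivity $6k-2$ for $\hat{H}$ in the 2-connected case. When $\hat{H}$ is genuinely $(6k-2)$-edge-connected, Theorem~\ref{thm:Eulerian} applies immediately and yields $\Phi_c(\hat{H}) \leq \frac{4k}{2k-1}$, completing the argument by duality and the homomorphism $\hat{G} \to \hat{G}'$.

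The main obstacle is that a sequence of foldings may produce cut vertices or parallel edges in $\hat{G}'$, so some cuts in $\hat{H}$ around individual vertices (corresponding to small faces created during folding or to degenerate configurations) might fall below $6k-2$, breaking the direct appeal to Theorem~\ref{thm:Eulerian}. This is exactly the scenario that Lemma~\ref{lem:D_z0Extendable} is designed for: at such a special vertex $z$ of $\hat{H}$, I would first choose an orientation $D_z$ of the edges incident to $z$ realizing the required boundary value $\beta(z) \equiv 2k \cdot d^+(z) \pmod{4k}$ (which is possible since $d(z)$ and $d^+(z)$ have compatible parity under the Eulerian/negative-cut conditions), and then invoke Lemma~\ref{lem:D_z0Extendable} to extend $D_z$ to a $(\mathbb{Z}_{4k},\beta)$-orientation of all of $\hat{H}$. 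Translating this back through the equivalence $(4) \Leftrightarrow (1)$ of Lemma~\ref{lem:EulerianEquivalence} furnishes the circular $\frac{4k}{2k-1}$-flow in $\hat{H}$, and duality plus the folding homomorphism gives the desired circular $\frac{4k}{2k-1}$-coloring of $\hat{G}$.
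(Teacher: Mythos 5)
Your first two steps (folding so that every face is a negative $(6k-2)$-cycle, then dualizing to a signed Eulerian plane graph and invoking Theorem~\ref{thm:Eulerian} when the dual is $(6k-2)$-edge-connected) match the paper. But the way you diagnose and handle the non-$(6k-2)$-edge-connected case contains a genuine gap. First, the claim that ``the minimum face length is $6k-2$, giving girth $6k-2$ for $\hat{G}'$'' is false: only the \emph{negative} girth is $6k-2$; a signed bipartite plane graph with all faces negative $(6k-2)$-cycles can still contain short \emph{positive} (non-facial) cycles, e.g.\ of length $4$. These correspond to small \emph{positive even} cuts in the dual, and this --- not cut vertices or parallel edges created by folding --- is the actual obstruction. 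Note also that after folding the dual is $(6k-2)$-regular, so there is no vertex $z$ of small degree in $\hat{H}$ to which Lemma~\ref{lem:D_z0Extendable} could be applied directly: the small cuts are of the form $(X,X^c)$ with both sides containing at least two vertices, whereas Lemma~\ref{lem:D_z0Extendable} requires that the unique small cut be the cut around a single designated vertex $z$ and that \emph{every} other cut be large.

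The missing idea is the induction/gluing mechanism that manufactures the setting of Lemma~\ref{lem:D_z0Extendable}. The paper takes a minimum counterexample, picks a positive even cut $(X,X^c)$ of size less than $6k-2$ with $X$ inclusion-wise minimal, and does two things: (i) it contracts the side $X$ of the dual, observes that the dual of the contraction is a proper subgraph of the original primal graph, and uses minimality of the counterexample to obtain a $(\mathbb{Z}_{4k},\beta')$-orientation $D'$ there; (ii) it identifies all of $X^c$ to a single new vertex $z$, so that by the minimal choice of $X$ the only possibly small cut of the resulting graph is the one around $z$, and uses the restriction of $D'$ to the cut edges as the prescribed partial orientation $D_z$ (this is also how the correct boundary value at $z$ is actually achieved --- you cannot just ``choose'' $D_z$ arbitrarily, since it must agree with the orientation already fixed on the other side). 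Only then does Lemma~\ref{lem:D_z0Extendable} apply, and the two orientations are glued along the cut. Without the contraction, the minimality of $X$, and the compatibility of $D_z$ with the orientation on $X^c$, your appeal to Lemma~\ref{lem:D_z0Extendable} does not go through.
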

	
	\begin{proof}
		Assume to the contrary that $(G, \sigma)$ is a minimum counterexample with respected to $|E(G)|+|V(G)|$. By Lemma~\ref{lem:Folding Lemma}, we may assume that $(G, \sigma)$ is a signed bipartite plane graph of negative-girth $6k-2$ in which each facial cycle is a negative $(6k-2)$-cycle and $(G, \sigma)$ admits no circular $\frac{4k}{2k-1}$-coloring. Let $\hat{G}^*=(G^*, \sigma^*)$ be the dual signed plane graph of $(G, \sigma)$. Hence, the signed graph $\hat{G}^*$ is Eulerian, $(6k-2)$-regular and moreover, each of its negative cut has size at least $6k-2$. If $G^*$ is $(6k-2)$-edge-connected, then we are done by Theorem~\ref{thm:Eulerian}. Thus we may assume that $\hat{G}^*$ has a positive even cut of size strictly less than $6k-2$. Let $(X, X^c)$ be such a cut with $X$ being inclusion-wise minimal among all the possibilities. That is to say, for every proper subset $Y$ of $X$ we have $|(Y, Y^c)|\geq 6k-2$. 
		
		Let $\hat{H}$ denote the signed subgraph of $\hat{G}^*$ induced by $X$. Observing that $|X|\geq 2$ and $\hat{H}$ is connected, we consider $\hat{G}^*/\hat{H}$ where all the edges of $\hat{H}$ are contracted but the remaining edges get their signs from $\sigma^*$. We claim that $\hat{G}^*/\hat{H}$ admits a circular $\frac{4k}{2k-1}$-flow. Otherwise, its dual signed graph, which is a proper subgraph of $(G, \sigma)$ (because $\hat{H}$ is connected), admits no circular $\frac{4k}{2k-1}$-coloring, contradicting to the minimality of $(G, \sigma)$. By the equivalence of $(1)$ and $(4)$ in  Lemma~\ref{lem:EulerianEquivalence}, we know that $\hat{G}^*/\hat{H}$ admits a $(\mathbb{Z}_{4k}, \beta')$-orientation with $\beta'(v)\equiv 2k\cdot d^+(v)~\pmod{4k}$ for $v\in V(\hat{G}^*/\hat{H})$. Let $D'$ be such a $(\mathbb{Z}_{4k}, \beta)$-orientation on $\hat{G}^*/\hat{H}$.
		
		Next we build a signed graph $\hat{G}_1$ from $\hat{G}^*$ by identifying all vertices in ${X}^c$ to a vertex $z$, deleting resulting loops, but keeping all parallel edges. Note that $d_{G_1}(z)=|(X, X^c)|<6k-2$ but for any other vertex subset $S\subset V(G_1)$, $|(S, S^c)|\geq 6k-2$. Let $D'_{z}$ be the orientation on the edges incident to $z$ (i.e., $E(X, X^c)$ in $\hat{G}^*$) induced by $D'$ and let $\beta''$ be a parity-compliant $4k$-boundary of $\hat{G}_1$ satisfying that $\beta''(v)\equiv 2k\cdot d^+(v)~\pmod{4k}$ for $v\in V(\hat{G}_1)$. Note that $$\beta''(z)\equiv 2k\cdot d^+(z) \equiv  \overrightarrow{d_{ \scriptscriptstyle D'_{\scriptscriptstyle z}}}(z)-\overleftarrow{d_{\scriptscriptstyle D'_{\scriptscriptstyle z}}}(z) \equiv \overleftarrow{d_{ \scriptscriptstyle D'_{\scriptscriptstyle z}}}(z)-\overrightarrow{d_{\scriptscriptstyle D'_{\scriptscriptstyle z}}}(z) ~\pmod{4k}$$ and thus we know that $D'_{z}$ achieves $\beta''$ at $z$. We may now apply Lemma~\ref{lem:D_z0Extendable} to extend the partial orientation $D'_{z}$ and obtain a $(\mathbb{Z}_{4k},\beta')$-orientation $D''$ on $\hat{G}_1$. 
		
		Combining $D'$ and $D''$, and also combining $\beta'$ (restricted to $X^c$) and $\beta''$ (restricted to $X$), we get a parity-compliant $4k$-boundary $\beta$ of $\hat{G}^*$ with $\beta(v)\equiv 2k\cdot d^+(v)~\pmod{4k}$ and the $(\mathbb{Z}_{4k}, \beta)$-orientation $D$ on $\hat{G}^*$. Using Lemma~\ref{lem:EulerianEquivalence} once again, we conclude that $\hat{G}^*$ admits a circular $\frac{4k}{2k-1}$-flow. Equivalently, as its dual, $(G, \sigma)$ must admit a circular $\frac{4k}{2k-1}$-coloring, contradicting the fact that this was a (minimum) counterexample to our claim.
	\end{proof}
	
	It is shown in \cite{NW22+} that the signed cycle $C_{\!\scriptscriptstyle -2k}$ is the bipartite circulant $(4k,2k-1)$-clique, that is to say, a signed bipartite graph $(G, \sigma)$ satisfies $\chi_c(G, \sigma)\leq \frac{4k}{2k-1}$ if and only if $(G, \sigma)$ admits a homomorphism to $C_{\!\scriptscriptstyle -2k}$. Thus we have the following corollary.
	
	\begin{corollary}
		Every signed bipartite planar graph of negative-girth at least $6k-2$ admits a homomorphism to $C_{\!\scriptscriptstyle -2k}$.
	\end{corollary}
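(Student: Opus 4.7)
The plan is to obtain this corollary as an immediate two-step consequence of Theorem~\ref{thm:NegativeBipartite6k-2} together with the clique-characterization of $C_{\!\scriptscriptstyle -2k}$ from \cite{NW22+}. First, I would apply Theorem~\ref{thm:NegativeBipartite6k-2} directly to $(G, \sigma)$, which by hypothesis is a signed bipartite planar graph of negative-girth at least $6k-2$, to conclude that $\chi_c(G, \sigma) \leq \frac{4k}{2k-1}$. This secures the circular chromatic bound; what then remains is purely a translation from the circular chromatic inequality to the existence of a homomorphism.

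For that translation, I would invoke the result from \cite{NW22+}, recalled just before the corollary in the excerpt, stating that $C_{\!\scriptscriptstyle -2k}$ is the bipartite circulant $(4k, 2k-1)$-clique, i.e., for a signed bipartite graph $(H, \pi)$ one has $\chi_c(H, \pi) \leq \frac{4k}{2k-1}$ if and only if $(H, \pi)$ admits a homomorphism to $C_{\!\scriptscriptstyle -2k}$. Applying this equivalence with $(H, \pi) = (G, \sigma)$ produces the required homomorphism.

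There is no genuine obstacle here, because the analytic work has been carried out in Theorem~\ref{thm:NegativeBipartite6k-2}, whose proof uses the bipartite folding lemma, the planar duality between circular flow and circular coloring, Lemma~\ref{lem:EulerianEquivalence}, and the extension statement Lemma~\ref{lem:D_z0Extendable}; meanwhile, the equivalence between the circular chromatic bound and the homomorphism to $C_{\!\scriptscriptstyle -2k}$ is precisely the clique property established in \cite{NW22+}. The only point deserving a moment's attention is to check that the hypothesis ``signed bipartite'' is preserved on both sides of the equivalence, which is exactly the setting in which the clique property of $C_{\!\scriptscriptstyle -2k}$ is formulated, so the two results compose without any further argument.
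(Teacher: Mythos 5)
Your proposal is correct and is exactly the paper's argument: the corollary follows by combining Theorem~\ref{thm:NegativeBipartite6k-2} with the fact from \cite{NW22+} that $C_{\!\scriptscriptstyle -2k}$ is the bipartite circulant $(4k,2k-1)$-clique. Nothing further is needed.
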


	\section{Conclusion and Questions}
	Recall a restatement of Tutte's $5$-flow conjecture is that every $2$-edge-connected signed graph admits a circular $10$-flow. 
	It has been proved in \cite{PZ03} that for any rational number $r$ between $2$ and $5$, there exists a graph $G$ with circular flow index being $r$. Thus by Lemma~\ref{lem:FlowOfT_2(G)} and considering the signed graph $T_2(G)$, we have the following.
	
	\begin{proposition}
		For any rational number $r\in [2, 10]$, there exists a $2$-edge-connected signed graph whose circular flow index is $r$.
	\end{proposition}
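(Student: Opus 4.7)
The plan is to combine the density result of \cite{PZ03} with Lemma \ref{lem:FlowOfT_2(G)}. By \cite{PZ03}, for every rational $r \in [2, 5]$ there is a (bridgeless) graph $G_r$ with $\Phi_c(G_r) = r$. Reading $G_r$ as the all-positive signed graph $(G_r, +)$ gives $\Phi_c(G_r, +) = \Phi_c(G_r) = r$, so the subinterval $[2,5]$ of our target range is covered immediately.

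For a rational $r \in (5, 10]$, I would write $r = 2r'$ with $r' \in (5/2, 5]$, so $r'$ is rational and still lies in the range handled by \cite{PZ03}. Applying Lemma \ref{lem:FlowOfT_2(G)} to the corresponding graph $G_{r'}$ produces the signed graph $T_2(G_{r'})$, for which the lemma gives $\Phi_c(T_2(G_{r'})) = 2\Phi_c(G_{r'}) = 2r' = r$. Thus the two constructions together realize every rational in $[2, 5] \cup [4, 10] = [2, 10]$.

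The only loose end is to confirm 2-edge-connectivity. Any graph with finite circular flow index is bridgeless, so each $G_r$ from \cite{PZ03} is 2-edge-connected. For $T_2(G_{r'})$, subdividing each edge of $G_{r'}$ into a path of length two cannot introduce any bridge: each subdivision vertex has degree $2$, and the only minimum edge-cuts created are precisely the pairs isolating such a subdivision vertex, which are cuts of size $2$. Since the proof is a direct composition of two already available results, I do not anticipate any real obstacle; the only point requiring a moment of care is checking that the two intervals $[2,5]$ and $[4,10]$ together exhaust $[2,10]$, which they do.
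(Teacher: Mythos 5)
Your proof is correct and follows exactly the paper's route: realize $[2,5]$ by the all-positive signed graphs from the construction of \cite{PZ03}, and realize $[4,10]\supseteq(5,10]$ by applying Lemma~\ref{lem:FlowOfT_2(G)} to those same graphs, noting that the two intervals cover $[2,10]$. The paper leaves the connectivity check and the interval bookkeeping implicit, so your write-up is simply a more explicit version of the same argument.
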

	
	For $3$-edge-connected signed graphs, on the one hand, we have a $6$-flow theorem (Theorem~\ref{thm:3-edgeconnected6-flow}), and on the other hand, we do not know any example whose circular flow index is larger than $5$.
	
	\begin{problem}\label{prob:5-flow}
		Is it true that every $3$-edge-connected signed graph admits a circular $5$-flow?
	\end{problem}
	
	The technique of \cite{S81} can be adapted to reduce the above problem to $3$-edge-connected signed cubic graphs.
	
	\medskip
	
	Tutte's $4$-flow conjecture asserts that every $2$-edge-connected Petersen-minor-free graph admits a circular $4$-flow. Similar to the restatement of Tutte's $5$-flow conjecture, by applying Proposition \ref{prop:2phi} and Lemma \ref{lem:FlowOfT_2(G)}, this conjecture can be reformulated as follows.

	\begin{conjecture}{\em [Tutte's $4$-flow conjecture restated]}\label{conj:8-flow}
		Every $2$-edge-connected signed Petersen-minor-free graph admits a circular $8$-flow.
	\end{conjecture}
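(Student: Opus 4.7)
The plan is to establish the equivalence of Conjecture~\ref{conj:8-flow} with the original Tutte's $4$-flow conjecture, mirroring the restatements already carried out in the excerpt for the $5$-flow conjecture (Conjecture~\ref{conj:10-flow}) and Seymour's $6$-flow theorem (Theorem~\ref{thm:12-flow}). Both directions are short and rely on Proposition~\ref{prop:2phi} and Lemma~\ref{lem:FlowOfT_2(G)}, respectively.

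For the forward direction, assume the original Tutte's $4$-flow conjecture, namely that every $2$-edge-connected Petersen-minor-free graph $G$ satisfies $\Phi_c(G)\le 4$. Let $(G,\sigma)$ be any signed graph whose underlying graph $G$ is $2$-edge-connected and Petersen-minor-free. Since $G$ is bridgeless, Proposition~\ref{prop:2phi} applies and yields $\Phi_c(G,\sigma)\le 2\Phi_c(G)\le 8$, which is exactly Conjecture~\ref{conj:8-flow} applied to $(G,\sigma)$.

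For the converse, assume Conjecture~\ref{conj:8-flow} and let $G$ be a $2$-edge-connected Petersen-minor-free graph. Form the signed graph $T_2(G)$ by replacing each edge of $G$ by a negative path of length $2$. Since $T_2(G)$ is a subdivision of $G$, it is again $2$-edge-connected. Moreover, the class of minors of any subdivision of $G$ coincides with the class of minors of $G$ itself (each subdivision vertex has degree $2$, so can be suppressed by contracting one of its two incident edges before performing any further minor operations), so $T_2(G)$ is Petersen-minor-free as well. By hypothesis $\Phi_c(T_2(G))\le 8$, and Lemma~\ref{lem:FlowOfT_2(G)} then gives $\Phi_c(G)=\tfrac{1}{2}\Phi_c(T_2(G))\le 4$, which is Tutte's original $4$-flow conjecture.

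The only non-mechanical step is verifying that $T_2(G)$ inherits Petersen-minor-freeness from $G$, but this is a standard consequence of the fact that subdividing edges does not introduce new minors. After this small observation, the equivalence drops out immediately from Proposition~\ref{prop:2phi} and Lemma~\ref{lem:FlowOfT_2(G)}, with no new flow-theoretic ideas required beyond what was already used in restating the $5$-flow conjecture and the $6$-flow theorem.
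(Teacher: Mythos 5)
Since the statement is a conjecture, what is really being proved here is the equivalence with Tutte's original $4$-flow conjecture, which is exactly what the paper asserts (without writing out) via Proposition~\ref{prop:2phi} and Lemma~\ref{lem:FlowOfT_2(G)}; your proposal follows precisely that route, and both directions are correct at the flow-theoretic level ($\Phi_c(G,\sigma)\le 2\Phi_c(G)\le 8$ one way, and $\Phi_c(G)=\tfrac12\Phi_c(T_2(G))\le 4$ the other).

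The one step you single out as non-mechanical, however, is justified by a false general principle. It is not true that ``the class of minors of any subdivision of $G$ coincides with the class of minors of $G$'': subdividing one edge of $K_3$ produces $C_4$, which has $C_4$ as a minor while $K_3$ does not. The reason a degree-$2$ vertex cannot always be suppressed before the other minor operations is that it may itself be needed as a branch vertex of the minor model. What rescues the argument in this instance is that the Petersen graph is cubic: for a graph $H$ with maximum degree at most $3$, having an $H$-minor is equivalent to containing a subdivision of $H$ as a subgraph, and any subdivision of the ($3$-regular) Petersen graph inside $T_2(G)$ must have all ten branch vertices among the original vertices of $G$ (the new vertices have degree $2$ in $T_2(G)$), so each of its paths traverses subdivided edges entirely and suppressing the new vertices yields a Petersen subdivision in $G$. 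With that repaired justification the converse direction, and hence the equivalence, goes through; as stated, your parenthetical argument would not survive scrutiny for a non-subcubic excluded minor.
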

	
	If the conjecture is true, then a potential strengthening would be to use the notion of minors of signed graphs, and consider $2$-edge-connected signed graphs with no $(P, -)$-minor where $P$ is the Petersen graph. 
	
	Note that the Petersen graph has circular flow index $5$. Furthermore, an infinite family of snarks with circular flow index $5$ are built in \cite{MR06}. For each such snark $G$, by Lemma~\ref{lem:FlowOfT_2(G)}, $T_2(G)$ is a $2$-edge-connected signed graph whose circular flow index is $10$. Considering the same $T_2$-operation, and in combination with the result of \cite{EMT16}, it follows that to decide if an input $2$-edge-connected signed graph has circular flow index strictly smaller than $10$ is an NP-hard problem.  
	
	\medskip
	
	In Theorem~\ref{thm:3-edgeconnected6-flow}, we have seen that every $4$-edge-connected signed graph admits a circular 4-flow. This upper bound is tight because a sequence of signed bipartite plane simple graphs of girth $4$ with circular chromatic numbers approaching $4$ is given in \cite{NWZ21}. Their duals provide a sequence of $4$-edge-connected signed Eulerian plane graphs with circular flow indices approaching $4$. The following however remains open:
	
	\begin{problem}\label{prob:Phi_c=4}
		Is there a $4$-edge-connected signed graph $(G,\sigma)$ satisfying that $\Phi_c(G, \sigma)=4$? 
	\end{problem}
	
	A result of \cite{KNNW21+} can be restated as: a signed graph verifying Problem~\ref{prob:Phi_c=4} positively cannot be planar and Eulerian. Moreover, Theorem~\ref{thm:3spanningtree<4flow} implies that such an example cannot be 6-edge-connected either.
	
	\medskip
	
	Tutte's $3$-flow conjecture states that every $4$-edge-connected graph admits a nowhere-zero $3$-flow. Later, M. Kochol proved in \cite{K01} that Tutte’s $3$-flow conjecture is equivalent to the seemingly weaker statement that every $5$-edge-connected graph admits a nowhere-zero $3$-flow.
	Motivated by Kochol's result, we may propose the next conjecture which implies Tutte's $3$-flow conjecture if it is true.
	
	\begin{conjecture}\label{conj:3-flow}
		Every $5$-edge-connected signed graph admits a circular $3$-flow.
	\end{conjecture}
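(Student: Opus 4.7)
The plan is to rephrase Conjecture~\ref{conj:3-flow} as an orientation-extension problem on the doubled graph $2G$ and then attempt to transfer the $(\mathbb{Z}_{2k},\beta)$-orientation techniques of \cite{LTWZ13,LWZ20} to this restricted setting. By Theorem~\ref{thm:Equiavlences}, a circular $3$-flow in $(G,\sigma)$ is the same as a $(6,2)$-flow, and by Theorem~\ref{thm:orientation} applied with $2p=6$ and $q=2$ (so that $2p-2q=2$), this is in turn equivalent to the existence of a $(\mathbb{Z}_{12},\beta)$-orientation on $2G$ with $\beta(v)\equiv 6\cdot d^+_{\scriptscriptstyle(G,\sigma)}(v)\pmod{12}$. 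Since every vertex of $2G$ has even degree and $\beta(v)\in\{0,6\}$ is always even, such a $\beta$ is automatically a parity-compliant $12$-boundary, and $2G$ is $10$-edge-connected whenever $G$ is $5$-edge-connected. The conjecture thus reduces to showing that every graph of the form $2G$ with $G$ being $5$-edge-connected admits a $(\mathbb{Z}_{12},\beta)$-orientation for the two-valued $\beta$ above.

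The first concrete step I would take is to try to sharpen Theorem~\ref{THM:LWZ20} in this restricted regime. In full generality that theorem needs $(3k-3)$-edge-connectivity, which for $k=6$ demands $15$-edge-connectivity, far more than the $10$ available from $2G$. However, the boundary $\beta$ here is not arbitrary: it takes only the two values $\{0,6\}$, and the graph $2G$ has the special feature that every edge sits in a parallel pair. A careful reinspection of the proof of Theorem~\ref{THM:LWZ20} might show that, after orienting each parallel pair in opposite directions as a baseline, the remaining ``perturbation'' problem has much smaller $|\beta(A)|$ slack demand and can be handled under $10$-edge-connectivity. A parallel route is to imitate Kochol's reduction~\cite{K01} inside the signed category: use splitting/contraction operations that preserve signatures up to inversing equivalence (exploiting the bond-space interpretation from Lemma~\ref{lem:Cycle-Switch-Equiv}) to reduce a hypothetical minimum counterexample to a higher edge-connectivity, and then invoke Theorem~\ref{thm:mainFlow} at that connectivity.

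The main obstacle is structural rather than merely technical: restricted to the all-positive signature $\sigma\equiv +$, Conjecture~\ref{conj:3-flow} specializes exactly to ``every $5$-edge-connected graph admits a nowhere-zero $3$-flow'', which by~\cite{K01} is Tutte's $3$-flow conjecture and is itself open. Any unconditional proof would therefore simultaneously resolve that long-standing problem, so a purely mechanical lowering of the connectivity in Theorem~\ref{THM:LWZ20} is unlikely to suffice. A realistic intermediate target is the conditional statement ``\emph{if} Tutte's $3$-flow conjecture holds, \emph{then} every $5$-edge-connected signed graph admits a circular $3$-flow''. The strategy there is to start from a modulo $3$-orientation of $G$ provided by the classical conjecture, transport it to $2G$, and then use the inversing and switching operations of Section~\ref{sec:CircularFlow} together with the freedom in the partition $(E_1^-,E_2^-)$ in Lemma~\ref{lem:orientcut} to absorb the parity obstructions at vertices with odd $d^+(v)$. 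The hard part will be to control all cuts simultaneously when both the signature and the orientation are being adjusted; here one would presumably need a partial-extension lemma analogous to Lemma~\ref{lem:D_z0Extendable} tuned to $5$-edge-connectivity, where the degree-slack becomes perilously tight.
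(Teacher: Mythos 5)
The statement you were asked to prove is not proved in the paper: it appears there only as a conjecture (Conjecture~\ref{conj:3-flow}), proposed in analogy with Kochol's reformulation of Tutte's $3$-flow conjecture, and the authors offer no argument for it. Your proposal, to its credit, recognizes exactly this. Your reductions are all correct: a circular $3$-flow is a $(6,2)$-flow by Theorem~\ref{thm:Equiavlences}; by Theorem~\ref{thm:orientation} with $2p=6$, $q=2$ this is equivalent to a $(\mathbb{Z}_{12},\beta)$-orientation on $2G$ with $\beta(v)\equiv 6\,d^+(v)\pmod{12}$; the parity-compliance of $\beta$ on $2G$ is automatic; and Theorem~\ref{THM:LWZ20} would demand $15$-edge-connectivity where only $10$ is available. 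You also correctly identify the decisive obstruction: on all-positive signatures the conjecture specializes to the $5$-edge-connected case of Tutte's $3$-flow conjecture, which by Kochol's theorem is the full conjecture, so no unconditional proof is possible without resolving that problem.

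The remainder of your proposal (sharpening Theorem~\ref{THM:LWZ20} for two-valued boundaries on doubled graphs, or proving the conditional statement assuming Tutte's $3$-flow conjecture) is speculative and does not constitute a proof of anything; in particular the sketch for the conditional version --- transporting a modulo $3$-orientation of $G$ to $2G$ and ``absorbing parity obstructions'' --- leaves the essential difficulty entirely open, and it is not clear that even the conditional statement is accessible. But since the paper itself proves nothing here, there is no gap relative to the paper: you have correctly diagnosed the status of the statement, and a genuine proof is not expected from you or from anyone at present.
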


	In conclusion, generalizing the notion of circular flow index of graphs to signed graphs, we have provided upper bounds for the circular flow indices of several classes of signed graphs, in particular for the class of $k$-edge-connected signed graphs. Our bounds are not shown to be tight and in fact, in most cases, they are not even expected to be tight. However, the problem of finding tight bounds captures some of the most challenging open problems in graph theory.
	
	Another venue for strengthening our results would be to prove same or similar bounds for larger classes of signed graphs. For example, in the case of graphs, using the splitting lemma~\cite{Zh02}, in some results the condition of edge-connectivity can be replaced with the condition of odd-edge-connectivity. In case of signed graphs perhaps the value of $c_{\scriptscriptstyle 00}(G, \sigma)$ (i.e., the size of a smallest positive even cut) does not have a strong affect on the upper bounds. Thus as a refinement of our study, given a triple $(a_{_{01}}, a_{_{10}}, a_{_{11}})$, one may ask for the best possible upper bound on the circular flow indices of signed graphs $(G, \sigma)$ satisfying $c_{\scriptscriptstyle 01}(G,\sigma)\geq a_{_{01}}$, $c_{\scriptscriptstyle 10}(G,\sigma)\geq a_{_{10}}$, and $c_{\scriptscriptstyle 11}(G,\sigma)\geq a_{_{11}}$. For the triple $(\infty, \ell, \infty)$, the question is about circular flow indices of graphs with odd-edge-connectivity at least $\ell$. In another special case of $(\ell, \infty, \infty)$, the question is about the circular flow indices of signed Eulerian graphs all whose negative cuts are of size at least $\ell$. In relation to orientations and homomorphisms to negative cycles, upper bounds of special interests are numbers in the form of $\frac{2k}{k-1}$. Thus given a positive integer $k>1$, one may ask: under which edge-connectivity conditions a signed graph is sure to admit a circular $\frac{2k}{k-1}$-flow? The best known results are summarized in Table~\ref{table:phi(k)}.
	
	\begin{center}
		
		\captionof{table}{Edge Connectivity and Circular Flow Index in signed Graphs}
		\label{table:phi(k)}
		\renewcommand{\arraystretch}{1.3}
		\begin{tabular}{ | l | l | l | l |  }
			\hline
			Edge-Connectivity & Conjectures  & Known bounds \\ \hline
			2 & $\Phi_c\le 10$ (Conj.~\ref{conj:10-flow}) & $\Phi_c\le 12$ (Thm.~\ref{thm:12-flow}) \\ \hline
			3 & $\ast$
			%$\Phi_c\le 5$ (Prob.~\ref{prob:5-flow})  
			& $\Phi_c\le 6$ (Thm.~\ref{thm:3-edgeconnected6-flow}) \\ \hline
			4 & $\ast$ & $\Phi_c\le 4$ (Thm.~\ref{thm:3-edgeconnected6-flow})   \\ \hline
			5 & $\Phi_c\le 3$ (Conj.~\ref{conj:3-flow}) & --- \\ \hline
			6 & & $\Phi_c<4$ (Thm.~\ref{thm:3spanningtree<4flow}) \\ \hline
			$\cdots$ & $\cdots$  & $\cdots$ \\ \hline	
			$3k-1$ & $\ast$  & $\Phi_c\le \frac{2k}{k-1}$ (Thm.~\ref{thm:mainFlow}) \\ \hline
			$3k$ & $\ast$  & $\Phi_c< \frac{2k}{k-1}$ (Thm.~\ref{thm:mainFlow}) \\ \hline
			$3k+1$ & $\ast$  & $\Phi_c\le \frac{4k+2}{2k-1}$ (Thm.~\ref{thm:mainFlow}) \\ \hline
			$6k-2$+Eulerian & $\ast$  & $\Phi_c\le \frac{4k}{2k-1}$ (Thm.~\ref{thm:Eulerian}) \\ \hline
		\end{tabular}
	\end{center}

	\medskip
	{\bf Acknowledgment.} This work is partially supported by the following grants: National Natural Science Foundation of China (Nos. 12222108, 12131013) and the Young Elite Scientists Sponsorship Program by Tianjin (No. TJSQNTJ-2020-09); ANR (France) project HOSIGRA (ANR-17-CE40-0022); European Union's Horizon 2020 research and innovation program under the Marie Sklodowska-Curie grant agreement No 754362, and NSERC-Canada grant number R611450; National Natural Science Foundation of China grant NSFC 11971438 and U20A2068 and by Zhejiang Natural Science Foundation grant ZJNSF LD19A010001.

\end{document}